\theoremstyle{definition}
\newtheorem{Th}{Theorem}[section]
\newtheorem{Def}[Th]{Definition}
\newtheorem{Cor}[Th]{Corollary}
\newtheorem{Prop}[Th]{Proposition}
\newtheorem{Lem}[Th]{Lemma}
\newtheorem{Obs}[Th]{Observation}
\theoremstyle{remark}
\newtheorem{Notation}[Th]{Notation}
\newtheorem{Example}[Th]{Example}
\newtheorem{Problem}[Th]{Problem}
\newtheorem{Question}[Th]{Question}
\newtheorem{Rem}[Th]{Remark}
\newcommand{\C}{\mathbb{C}}
\newcommand{\Z}{\mathbb{Z}}
\newcommand{\F}{\mathcal{F}}
\newcommand{\op}{\operatorname}
\newcommand{\E}{\textswab{E}}
\title{Foliations and Galois Theory\\
in Positive Characteristic}
\author{Przemysław Grabowski}
\date{}
\begin{document}

\maketitle

\begin{abstract}
    We prove a Galois-type correspondence between compositions of purely inseparable field extensions (including infinite ones) and subalgebras of differential operators. This correspondence can be utilized to establish a connection between separable field extensions and purely inseparable field extensions. Specifically, it serves as a progression towards gaining a deeper comprehension of a foliation theory on varieties in positive characteristic.
\end{abstract}

\section{Introduction}
\paragraph{Geometric Motivation}
In algebraic geometry over $\C$, one can study foliations on normal varieties. A foliation is a subsheaf of a tangent bundle that is closed under Lie brackets. More details on this notion can be found in books \cite{Brunella} and \cite{Foliations in AlgGeo}. Foliations naturally arise as a generalisation of smooth dominant morphisms. These morphisms inject into foliations by taking their relative tangent bundles. This is an important injection for geometrical applications. However, an analog of this theory in positive characteristic, a theory of $1$-foliations introduced by Ekedahl in \cite{Ekedahl87}, fails at injecting smooth  dominant morphisms into $1$-foliations. Indeed, we have the following.

\begin{Example}
    Let $k$ be a perfect field of characteristic $p>0$. Let $f,g$ be morphisms $\mathbb{A}^2\to \mathbb{A}^1$ corresponding to ring maps $k[x]\to k[x,y]$ and $k[x+y^p]\to k[x,y]$ respectively. These morphisms are not equal, but their relative tangent bundles are equal. Indeed, each is spanned by $\frac{\partial}{\partial y}$.
\end{Example}

Although the operation is not injective, it is useful in a different way. It provides a correspondence between $1$-foliations and purely inseparable morphisms of height at most $1$, see the Proposition 2.4 in \cite{Ekedahl87}. 
It was successfully used to study positive characteristic geometry in many papers, e.g. \cite{Mi87}, \cite{Ekedahl88}, \cite{Mingmin}, and \cite{Langer1}. Nevertheless, the lack of the injectivity from smooth dominant morphisms is problematic, because it prevents us from studying foliations in a characteristic free way. 
This issue motivates the following question.

\begin{Question}\label{question0}
    Is it possible to extend $1$-foliation data to get a theory that admits an injection from smooth dominant morphisms to new extended data?
\end{Question}

The answer is affirmative. A solution is to use algebras of differential operators instead of modules of vector fields. And, as in the case of vector fields, we have to impose some extra conditions on these algebras. For the modules this is being a Lie algebra. For the algebras this will be being a \emph{$\infty$-foliation}.
We will define this notion in the Definition \ref{infty-foli} after stating our correspondence \ref{MainTheoremIntro}. This correspondence explains what information \emph{arbitrary} algebras of differential operators encode. 
After that, we will show that there is an injection from smooth dominant morphisms into $\infty$-foliations, the Proposition \ref{fibr into foli}. 
However, in this paper, we work on generic points only, i.e. with fields, to simplify our exposition and to connect our results with Galois theories. Therefore, we only prove an analog of such proposition for fields in positive characteristic.

Our main inspiration is a paper \cite{Ekedahl87} by Ekedahl.
According to our knowledge, Ekedahl's paper is the first one in which an idea of using differential operators instead of vector fields to study foliations in positive characteristic appears. However, he does not study arbitrary algebras of differential operators there, only some of them that he calls $m$-foliations on varieties. Moreover, dual objects of these subalgebras are formal groupoids. This observation has motivated a recent paper \cite{McQuillan} by McQuillan. In his paper, he proposes a characteristic free definition of a foliation in terms of formal groupoids. In the introduction, he states that his notion is a generalisation of Ekedahl's foliations.

\paragraph{Our Results}\label{results}
Let $K$ be a field of characteristic $p>0$. Let $k\coloneqq \bigcap_{n\ge 0} K^{p^n}$. We assume that $K/k$ is a finitely generated field extension.

In this paragraph, we state our Galois-type correspondence.
This result is related to purely inseparable Galois theories. These are theories establishing bijections between purely inseparable field extensions and some other structures. We quickly review some of them before the statement, because our correspondence partially overlaps with them.

Historically, the first one is a Jacobson Correspondence from \cite{Jacobson1} that covers only subfields $K\supset W \supset K^p$. This says that these subfields are in a bijection with $p$-Lie subalgebras of a tangent bundle $TK$. We recall it in \ref{Jacobson}. Later, in books \cite{Jacobson2}, \cite{Jacobson3}, the Jacobson Correspondence was extended to a Jacobson--Bourbaki Correspondence, we recall it in \ref{JB}. 
This is a bijection between arbitrary subfields of $K$ and closed $K$-subalgebras of additive operators on $K$, i.e. $\op{End}_\Z(K)$. 
This bijection restricted to subfields $W\subset K$ such that $K/W$ is purely inseparable maps these subfields onto finite $K$-subalgebras of differential operators $\op{Diff}(K)$ by the Lemma \ref{end=diff}. This could be called a Jacobson--Bourbaki purely inseparable Galois theory.

There are also other Galois theories that we would like to mention though they do not overlap with our correspondence. In a paper \cite{Sweedler}, Sweedler establishes a bijection between modular subfields $W\subset K$ and higher derivations. These subfields are purely inseparable subfields satisfying an extra condition. This bijection is compared with the Jacobson--Bourbaki Correspondence in Wechter's thesis \cite{Wechter}. Recently, in a paper \cite{PIGalois}, a new theory was proposed by Brantner and Waldron. Their bijection is between subfields of finite exponents $K\supset W \supset K^{p^r}$, $r\ge 0$, and partition Lie algebroids. These algebroids are a derived algebraic geometry generalisation of relative tangent bundles.

We have already observed that the Jacobson--Bourbaki purely inseparable Galois theory states that there is a bijection between subfields $W\subset K$ such that $K/W$ is purely inseparable and finite $K$-subalgebras $\mathcal{D}\subset \op{Diff}(K)$. This follows from the Jacobson--Bourbaki Correspondence, but this correspondence says nothing about infinite dimensional subalgebras. Indeed, these subalgebras are not closed, and their closures in $\op{End}_\Z(K)$ often coincide, see the Remark \ref{closures}. Nevertheless, it is possible to extend this purely inseparable Galois theory to cover all subalgebras by replacing purely inseparable subfields with towers of them. This is a part of our main theorem.

\begin{Th}[Theorem \ref{MainTheorem}]\label{MainTheoremIntro}
Let $K$ be a field of characteristic $p>0$. Let $k\coloneqq \bigcap_{n\ge 0} K^{p^n}$. We assume that $K/k$ is a finitely generated field extension.

There are natural bijections between the following data on $K$:
\begin{itemize}
    \item power towers on $K$: a tower of subfields $W_\bullet\subset K$, where $\bullet\ge 0$, such that $i\le j \Rightarrow W_j \cdot K^{p^i}=W_i$,
    \item Jacobson sequences on $K$: $(\F_1,\F_2,\ldots)$, see the Definition \ref{JacSeq},
    \item $K$-subalgebras of differential operators on $K$: $K\subset \mathcal{D}\subset \op{Diff}_k(K)$
\end{itemize}
Explicitly,
these bijections are given by the following operations:
\begin{itemize}
    \item an iterated tangent bundle: $W_\bullet \mapsto (T_{W_0/W_1},T_{W_1/W_2},T_{W_2/W_3}, \ldots)$,
    \item an iterated $\op{Ann}(\bullet)$: $(\F_1,\F_2,\ldots)\mapsto \op{Ann}(\F_\bullet)$,
    \item an algebra of relative differential operators: $W_\bullet \mapsto \op{Diff}_{W_\bullet}(K)\coloneqq \bigcup_{n\ge 0}\op{Diff}_{W_n}(K)$,
    \item an unpacking: $\mathcal{D} \mapsto \left(\F_1\coloneqq \mathcal{D}\cap TK,\ldots, \F_n\coloneqq d(K/W_{n-1})(\mathcal{D})\cap TW_{n-1}, \ldots \right)$.
\end{itemize}
Moreover, we have that ranks $\op{rk}(\F_i)\coloneqq \op{dim}_{W_{i-1}}(\F_i)$ are finite and nonincreasing: $\op{rk}(\F_i)\ge \op{rk}(\F_{i+1})$, and they satisfy $p^{\op{rk}(\F_i)}=\op{dim}_{W_{i+1}}(W_i)$.
\end{Th}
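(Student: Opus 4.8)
\emph{Strategy.} I would prove the theorem as a composite of two bijections sharing the middle datum ``power tower''. The first, power towers $\leftrightarrow$ $K$-subalgebras, rests on the Jacobson--Bourbaki correspondence \ref{JB} together with a ``Frobenius exhaustion'' of $\op{Diff}_k(K)$; the second, power towers $\leftrightarrow$ Jacobson sequences, is the classical Jacobson correspondence \ref{Jacobson} applied layer by layer. Once both are in place, the remaining maps in the statement (the ``unpacking'' $\mathcal D\mapsto\F_\bullet$ and $W_\bullet\mapsto\op{Diff}_{W_\bullet}(K)$) are forced, and the rank formulas are read off.

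\emph{Power towers $\leftrightarrow$ $K$-subalgebras.} First I would record $\op{Diff}_k(K)=\bigcup_{n\ge 0}\op{End}_{K^{p^n}}(K)$. This uses that $K/k$ is finitely generated, hence has a finite $p$-basis, so $[K:K^{p^n}]<\infty$: if $D$ has order $\le m$ then for any $p$-basis element $t$ one has $(\op{ad}t)^{p^n}(D)=\op{ad}(t^{p^n})(D)=0$ once $p^n>m$, so $D$ is $K^{p^n}$-linear; conversely a $K^{p^n}$-linear operator has finite order by the same identity. (This is essentially Lemma \ref{end=diff}, which also gives $\op{End}_W(K)=\op{Diff}_W(K)$ whenever $K/W$ is finite purely inseparable.) Given $K\subset\mathcal D\subset\op{Diff}_k(K)$, set $\mathcal D_n:=\mathcal D\cap\op{End}_{K^{p^n}}(K)$; this is a finite-dimensional $K$-subalgebra of $\op{End}_\Z(K)$, hence closed, hence by \ref{JB} equals $\op{End}_{W_n}(K)$ for a unique subfield $K^{p^n}\subseteq W_n\subseteq K$. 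The relations $\mathcal D_j\cap\op{End}_{K^{p^i}}(K)=\mathcal D_i$ for $i\le j$ (immediate since $\op{End}_{K^{p^i}}(K)\subseteq\op{End}_{K^{p^j}}(K)$) translate, under the inclusion-reversing lattice isomorphism of \ref{JB} which sends intersections of subalgebras to composita of subfields, into exactly $W_j\cdot K^{p^i}=W_i$; so $W_\bullet$ is a power tower. Conversely $W_\bullet\mapsto\bigcup_n\op{Diff}_{W_n}(K)$ is a $K$-subalgebra because the $W_n$ are nested (the power-tower axiom forces $W_j\subseteq W_i$ for $i\le j$) and each contains $k$, and the two round-trips reduce to the single identity $\bigcup_m\op{Diff}_{W_m\cdot K^{p^n}}(K)=\op{Diff}_{W_n}(K)$, which follows from $W_m\cdot K^{p^n}=W_{\min(m,n)}$ by splitting the union at $m=n$ and using $K^{p^n}\subseteq W_m$ for $m\le n$.

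\emph{Power towers $\leftrightarrow$ Jacobson sequences, unpacking, ranks.} From the axioms, $W_{n-1}^p=W_n^p\cdot K^{p^n}\subseteq W_n\subseteq W_{n-1}$, so each layer is purely inseparable of height $\le 1$ and \ref{Jacobson} applies: $\F_n:=T_{W_{n-1}/W_n}$ is a finite restricted $W_{n-1}$-Lie subalgebra of $TW_{n-1}$ with annihilator $W_n$. The content is then to check that Definition \ref{JacSeq} axiomatizes precisely those sequences $\F_\bullet$ for which the iterated annihilator $W_\bullet=\op{Ann}(\F_\bullet)$ satisfies the power-tower relations, and that ``iterated tangent bundle'' and ``iterated $\op{Ann}$'' are mutually inverse; this is mostly an unwinding, using that the compatibility conditions in \ref{JacSeq} relating $\F_{n+1}\subset TW_n$ to $\F_n$ are the infinitesimal form of $W_{n+1}\cdot K^{p^n}=W_n$. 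The ``unpacking'' description $\F_n=d(K/W_{n-1})(\mathcal D)\cap TW_{n-1}$ is matched against $T_{W_{n-1}/W_n}$ using $\op{End}_{W_{n-1}}(K)\subseteq\mathcal D$ and a Morita-type identification of $K$-subalgebras of $\op{Diff}_k(K)$ lying over $\op{End}_{W_{n-1}}(K)$ with $W_{n-1}$-subalgebras of $\op{Diff}_k(W_{n-1})$. Finally $p^{\op{rk}(\F_i)}=[W_{i-1}:W_i]$ is immediate from \ref{Jacobson} (a relative $p$-basis of $W_{i-1}/W_i$ is a $W_{i-1}$-basis of $\F_i$), finiteness of $\op{rk}(\F_i)$ follows from $[K:K^p]<\infty$, and monotonicity comes from a degree count combining $W_{i-1}=W_i\cdot K^{p^{i-1}}$, $W_i=W_{i+1}\cdot K^{p^i}$ and $W_i^p\subseteq W_{i+1}$ with the Frobenius isomorphism $K^{p^{i-1}}\cong K^{p^i}$.

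\emph{Main obstacle.} I expect the hard part to be the Jacobson-sequence side: pinning down that Definition \ref{JacSeq} is the correct axiomatization, i.e.\ verifying in both directions that the power-tower relations are equivalent to $\F_\bullet$-compatibility, and in particular that $d(K/W_{n-1})$ is well-defined on $\mathcal D$ and yields a restricted Lie algebra after intersecting with $TW_{n-1}$. This needs the descent of differential-operator algebras along a finite purely inseparable extension (a double-centralizer / Morita argument), which is the technical heart. A secondary delicate point is monotonicity of ranks: the naive compositum-degree inequality only gives $[W_i:W_{i+1}]\le[K^{p^{i-1}}:K^{p^{i-1}}\cap W_i]$, and one must upgrade the count, ultimately using that $\mathcal D$ is closed under composition — equivalently, the full family of relations $W_j\cdot K^{p^i}=W_i$ for all $i\le j$ and not merely consecutive indices — to conclude $[W_i:W_{i+1}]\le[W_{i-1}:W_i]$.
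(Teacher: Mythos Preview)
Your overall architecture matches the paper's exactly: the theorem is assembled from (i) power towers $\leftrightarrow$ $K$-subalgebras via the canonical filtration $\mathcal D_n=\mathcal D\cap\op{Diff}_{p^n}(K)$ together with the discrete Jacobson--Bourbaki correspondence (the paper's Theorem \ref{part1} / Corollary \ref{effective}), (ii) power towers $\leftrightarrow$ Jacobson sequences by iterating the height-one Jacobson correspondence (Theorem \ref{part2}), (iii) the unpacking formula (Proposition \ref{part3}), and (iv) monotonicity of degrees (Proposition \ref{NonincreasingOrder}). Your identification of the key steps and their relative difficulty is accurate.

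Two points of divergence are worth flagging. For the unpacking, you propose a Morita/double-centralizer descent along $W_{n-1}\subset K$; the paper instead constructs the differential $d(K/W)\colon\op{Diff}(K)\to\op{Diff}(W)\otimes K$ explicitly in Proposition \ref{differential} and proves its surjectivity by lifting operators via a $p$-basis, then in Proposition \ref{part3} simply computes $d(K/W_{i-1})(\mathcal D_i)=\op{End}_{W_i}(W_{i-1})\otimes K$ by restriction. Both work; the paper's route is more concrete and avoids module-theoretic machinery, while yours would package the recursion more uniformly. For monotonicity, your Frobenius/compositum sketch is actually cleaner than what the paper does: from $W_{i-1}^p=W_i^p\cdot K^{p^i}$ and $W_i^p\subseteq W_{i+1}$ one gets $W_i=W_{i+1}\cdot W_{i-1}^p$, hence $[W_i:W_{i+1}]\le[W_{i-1}^p:W_{i-1}^p\cap W_{i+1}]\le[W_{i-1}^p:W_i^p]=[W_{i-1}:W_i]$; the paper instead picks explicit generators $x_j\in K^{p^{n+1}}$ of $W_{n+1}/W_{n+2}$ and compares with $W_{n+2}(x_j^{1/p})\subseteq W_n$. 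So the ``secondary delicate point'' you worry about is in fact softer than you anticipate, and your worry about needing the full family of relations $W_j\cdot K^{p^i}=W_i$ (not just consecutive indices) is unfounded here.
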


This theorem covers all $K$-subalgebras of differential operators. And, it does not impose any additional conditions on them such as being closed in a topology. 
In this correspondence, purely inseparable subfields are replaced by power towers on $K$.
However, an operation $W \mapsto W\cdot K^{p^\bullet}$ injects these subfields into power towers on $K$. The image consists of eventually constant power towers on $K$, see the Proposition \ref{FiniteLength}. This recovers the Jacobson--Bourbaki purely inseparable Galois theory from our theorem. Finally, the data of Jacobson sequences with an operation of unpacking is our generalisation of a comparison of the Jacobson Correspondence \ref{Jacobson} with the Jacobson--Bourbaki Correspondence \ref{JB}, see the Remark \ref{<L>capTK=L}.

Moreover, we can exploit the operation $W \mapsto W\cdot K^{p^\bullet}$ more. Indeed, this operation extends well to all subfields of $K$, but then it is not injective anymore. It is made precise in the following proposition.

\begin{Prop}[Proposition \ref{infty=s}]  
Let $W$ be a subfield of $K$. Let $W_\bullet\coloneqq W\cdot K^{p^\bullet}$ be its power tower on $K$, $\bullet\ge 0$, then power towers of $W$ and $\left(W\cdot k\right)^s$ on $K$ coincide, and $\bigcap_{\bullet \ge 0} W_\bullet =\left(W\cdot k\right)^s$, where $()^s$ is a separable closure in $K$.

In particular, an operation $W \mapsto W\cdot K^{p^\bullet}$ is injective on subfields $W\subset K$ such that $k\subset W \subset K$, and $W$ is separably closed in $K$.
\end{Prop}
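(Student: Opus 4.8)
The plan is to first reduce the proposition to a single statement about separably closed subfields, and then prove that statement by a linear-algebra argument over the subfields $VK^{p^n}$.

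\emph{Reductions.} Put $M:=W\cdot k$ and $V:=M^s=(W\cdot k)^s$. Since $k\subseteq K^{p^n}$ we have $WK^{p^n}=MK^{p^n}$ for all $n$, so $W$ and $M$ have the same power tower. For a finite separable extension $L/F$ one has $L=F\cdot L^{p^n}$ (primitive element together with a $\gcd$ computation), so for $x\in M^s$ the field $M(x)$ is finite separable over $M$ and $M(x)=M\cdot M(x)^{p^n}\subseteq MK^{p^n}$; hence $M^s\subseteq\bigcap_n MK^{p^n}$, which forces $M^sK^{p^n}=MK^{p^n}$ for every $n$. Thus $W$, $M$ and $V$ all have the same power tower, and $V\subseteq\bigcap_n WK^{p^n}$. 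It therefore suffices to prove: \emph{if $k\subseteq V\subseteq K$, $K/k$ is finitely generated and $V$ is separably closed in $K$, then $\bigcap_n VK^{p^n}=V$.} The last assertion of the proposition then follows at once, because if $k\subseteq W\subseteq K$ with $W$ separably closed in $K$ then $(W\cdot k)^s=W$, so $W$ is recovered as the intersection of its power tower.

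\emph{Proof of the claim.} Write $V_n:=VK^{p^n}$ and $L:=\bigcap_n V_n\supseteq V$. First, $V_{n+1}=V\cdot V_n^{\,p}$: the inclusion $\supseteq$ is clear and $\subseteq$ holds because $V_n^{\,p}=V^pK^{p^{n+1}}\subseteq VK^{p^{n+1}}=V_{n+1}$. Second, $k=\bigcap_m K^{p^m}$ is perfect, so every extension of $k$ is separable; in particular $K^{p^n}/k$ is separable, hence so is $VK^{p^n}/V$ by base change, and therefore $V_n^{\,p}$ is separable over every subfield containing $V^p$. Also $L^p=\bigcap_n V_n^{\,p}$, since Frobenius is injective. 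Now $V/k$ is finitely generated (a subfield of a finitely generated extension), so $[V:V^p]<\infty$; the decreasing chain of intermediate fields $V\cap V_n^{\,p}$, all containing $V^p$, therefore stabilises, say $V\cap V_n^{\,p}=V'$ for $n\ge N$. For such $n$ the extension $V/V'$ is purely inseparable of exponent one while $V_n^{\,p}/V'$ is separable, so $V_n^{\,p}\otimes_{V'}V$ is reduced and hence equals the compositum $V_{n+1}$; in particular one fixed $V'$-basis $\beta_1,\dots,\beta_r$ of $V$ is simultaneously a $V_n^{\,p}$-basis of $V_{n+1}$ for every $n\ge N$. Given $x\in L$, expand $x=\sum_i\gamma_i^{(n)}\beta_i$ with $\gamma_i^{(n)}\in V_n^{\,p}$; because the $V_m^{\,p}$ decrease and the bases are nested, uniqueness of coordinates forces $\gamma_i^{(n)}$ to be independent of $n$, so each coordinate lies in $\bigcap_n V_n^{\,p}=L^p$ and $x\in V\cdot L^p$. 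Hence $L=V\cdot L^p$, which implies (since $L/V$ is finitely generated, e.g.\ via $\Omega_{L/V}=0$) that $L/V$ is finite separable; as $V$ is separably closed in $K\supseteq L$ we conclude $L=V$.

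The step I expect to be delicate is producing the single basis $\beta_1,\dots,\beta_r$ of $V$ over $V'$ that remains a basis of $V_{n+1}=V\cdot V_n^{\,p}$ over $V_n^{\,p}$ for all large $n$: a priori the number of $V_n^{\,p}$-generators needed to express an element of $L$ could grow with $n$. The input that rules this out is that each $VK^{p^n}/V$ is a separable extension --- equivalently, that $V$ and $V_n^{\,p}$ are linearly disjoint over their intersection --- and this rests only on $k$ being perfect. The remaining ingredients (the two reductions, the identity $L=FL^{p^n}$ for finite separable $L/F$, and the equivalence ``$\Omega_{L/V}=0$ iff $L/V$ is finite separable'' for finitely generated $L/V$) are standard.
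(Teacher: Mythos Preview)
Your reductions are clean and your overall strategy---show $L:=\bigcap_n V_n$ satisfies $L=V\cdot L^p$, hence $\Omega_{L/V}=0$, hence $L/V$ is finite separable, hence $L=V$---is attractive and quite different from the paper's argument (which routes through the short exact sequence for $\op{Diff}$ and the main correspondence). However, there is a genuine gap at exactly the point you flag as delicate.

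The claim ``$K^{p^n}/k$ is separable, hence so is $VK^{p^n}/V$ by base change'' is not valid: separability of a compositum over one factor does not follow from separability over a common subfield. Concretely, take $k$ perfect, $K=k(t)$, and $V=k(t^{p^2})$. Then $V$ is separably closed in $K$ and $k\subset V$, but $V_1=VK^{p}=k(t^{p})$ and $V_1/V=k(t^{p})/k(t^{p^2})$ is purely inseparable, not separable. The follow-up assertion ``$V_n^{\,p}$ is separable over every subfield containing $V^{p}$'' fails for the same reason: separability of $L/F$ does not descend to $L/E$ for arbitrary intermediate $F\subset E\subset L$ (e.g.\ $F=\mathbb{F}_p$, $L=\mathbb{F}_p(t)$, $E=\mathbb{F}_p(t^{p})$). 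Since your linear-disjointness step---that $V\otimes_{V'}V_n^{\,p}\cong V_{n+1}$ and hence a single $V'$-basis of $V$ serves for all large $n$---rests entirely on $V_n^{\,p}/V'$ being separable, the argument as written does not go through.

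It is conceivable that for all sufficiently large $n$ one does have $V$ and $V_n^{\,p}$ linearly disjoint over $V'=V\cap V_n^{\,p}$ (and in simple examples this holds once the chain $V\cap V_n^{\,p}$ has stabilised), but proving this seems to require information tantamount to what you are trying to establish (for instance, that $L/V$ is algebraic, or that $K/L$ is separable). By contrast, the paper avoids this circularity by first showing that the power towers of $V$ and $L$ coincide, then invoking the equality $\op{Diff}_V(K)=\op{Diff}_{L}(K)$ from the correspondence and the exact sequence $0\to\op{Diff}_V(K)\cdot\E(K/L)\to\op{Diff}_V(K)\to\op{Diff}_V(L)\otimes K\to 0$ to force $\E_V(L/V)=0$, whence $\Omega_{L/V}=0$.
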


An image of this operation is particularly well behaved if $K/W$ is a separable extension, see the Definition \ref{separable}. And, these extensions are an analog of smooth dominant morphisms. The image lands in $\infty$-foliations.

\begin{Def}\label{infty-foli}
    A $\infty$-foliation on $K$ is a power tower $W_\bullet$ on $K$ such that it is of a constant degree, i.e. $\op{dim}_{W_{i+1}}(W_i)=\op{dim}_{W_{i+2}}(W_{i+1})$ for all $i\ge 0$.
\end{Def}

In general, by the Theorem \ref{MainTheoremIntro}, we know that $\op{dim}_{W_{i+1}}(W_i)\ge \op{dim}_{W_{i+2}}(W_{i+1})$. So, these power towers are visibly distinguished among all power towers. Finally, the following is an analog of the injection of smooth dominant morphisms into foliations for fields in positive characteristic.

\begin{Prop}[Proposition \ref{fibr into foli}]
Let $W$ be a subfield of $K$ such that $k\subset W\subset K$ and $K/W$ is separable, then a power tower of $W$ on $K$ is a $\infty$-foliation on $K$.
\end{Prop}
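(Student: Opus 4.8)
The plan is to work directly with the power tower $W_\bullet\coloneqq W\cdot K^{p^\bullet}$ attached to $W$ (that this really is a power tower is immediate, since $W_j\cdot K^{p^i}=W\cdot K^{p^i}=W_i$ for $i\le j$; by Proposition \ref{infty=s} it agrees with the power tower of the separable closure of $W$ in $K$, but I will not need that). Note $W_0=W\cdot K=K$. Because $K^{p^j}\subseteq W_j\subseteq K$ and $[K:K^{p^j}]$ is finite — it equals $p^{nj}$ with $n=\op{tr.deg}(K/k)$, using that $k$ is perfect, that $K/k$ is finitely generated, and the Frobenius isomorphisms $K\xrightarrow{\ \sim\ }K^{p^\ell}$ — every degree below is finite, and
\[
\op{dim}_{W_{i+1}}(W_i)=\frac{[K:W_{i+1}]}{[K:W_i]}.
\]
So the whole proof reduces to computing $[K:W_j]$ for each $j$, which I would do through the tower $K\supseteq W_j\supseteq K^{p^j}$: the bottom piece is $[K:K^{p^j}]=p^{nj}$, and it remains to show the top piece is $[W_j:K^{p^j}]=p^{mj}$ with $m\coloneqq\op{tr.deg}(W/k)$.

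The content of the top-piece computation is the identity $[W\cdot K^{p^j}:K^{p^j}]=[W:W^{p^j}]$, i.e. that $W$ and $K^{p^j}$ are linearly disjoint over $W^{p^j}$, and this is the only place separability of $K/W$ enters. Here is how I would extract it: by the characterisation of separable extensions (Definition \ref{separable}, MacLane's criterion) $K$ and the perfect closure $W^{1/p^\infty}$ are linearly disjoint over $W$, hence so are $K$ and the intermediate field $W^{1/p^j}$; now applying the $j$-th power Frobenius $x\mapsto x^{p^j}$, which is an automorphism of an algebraic closure $\overline K$ because $\overline K$ is perfect and which carries $K\mapsto K^{p^j}$, $W^{1/p^j}\mapsto W$, $W\mapsto W^{p^j}$, and using that field automorphisms preserve linear disjointness, I get that $K^{p^j}$ and $W$ are linearly disjoint over $W^{p^j}$. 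Finally $W/k$ is finitely generated (a subextension of a finitely generated field extension is finitely generated), so $[W:W^{p^j}]=[W:W^p]^j=p^{mj}$ again by Frobenius, whence $[W_j:K^{p^j}]=[W:W^{p^j}]=p^{mj}$.

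Putting the two pieces together gives $[K:W_j]=p^{nj}/p^{mj}=p^{(n-m)j}$ for every $j\ge 0$ (and $m\le n$, since $W\subseteq K$), so
\[
\op{dim}_{W_{i+1}}(W_i)=\frac{p^{(n-m)(i+1)}}{p^{(n-m)i}}=p^{\,n-m}
\]
is independent of $i$; that is, $W_\bullet$ is of constant degree, hence an $\infty$-foliation, with constant rank $n-m$. I expect the only genuinely load-bearing step to be the linear-disjointness identity $[W\cdot K^{p^j}:K^{p^j}]=[W:W^{p^j}]$: this is precisely what breaks for inseparable $K/W$ — e.g. for $W=k(x^p,y)\subset k(x,y)=K$ the left side is $p$ while the right side is $p^2$ — and it is exactly what promotes the a priori inequality $\op{dim}_{W_{i+1}}(W_i)\ge\op{dim}_{W_{i+2}}(W_{i+1})$ of Theorem \ref{MainTheoremIntro} to an equality here. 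Everything else is routine bookkeeping with Frobenius isomorphisms and multiplicativity of field degrees.
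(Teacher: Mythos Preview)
Your argument is correct and takes a genuinely different route from the paper's. The paper proceeds by choosing explicit coordinates: it picks a $p$-basis of $W/k$ and a separating transcendence basis of $K/W$, reduces to the purely transcendental case of Example \ref{transcedental => infty-foli}, and then pulls the constant-degree property back along a base change $k(x_i,y_j^{p^{n+1}})\to W_{n+1}$. You instead compute $[K:W_j]$ directly by sandwiching $W_j$ between $K$ and $K^{p^j}$ and invoking MacLane's criterion (the linear-disjointness characterisation of separability, which the paper also uses elsewhere, in the proof of Proposition \ref{differential}) together with a Frobenius twist to get $[W\cdot K^{p^j}:K^{p^j}]=[W:W^{p^j}]$. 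Your approach is coordinate-free and arguably more transparent: it isolates exactly the one place separability enters and makes the failure for inseparable $K/W$ visible by the same formula, as your example $k(x^p,y)\subset k(x,y)$ shows. The paper's coordinate approach, on the other hand, exhibits $W_n$ explicitly as $k(x_i,y_j^{p^n})^s$, which is informative in its own right and closer in spirit to the later arguments in Section \ref{section5}.
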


\paragraph{Layout} The paper consists of five sections. The first one is the introduction. The second section \ref{section2} establishes a basic theory of power towers. The third section \ref{section3} recalls a construction of differential operators, and it proves some of their elementary properties. The emphasis is on positive characteristic structures. The fourth section \ref{section4} carefully proves the main theorem. 
In the last section \ref{section5}, we obtain some more results about power towers by using a theory established in the previous sections. In particular, we prove the injection of separably closed subfields into power towers, and we discuss a problem of extending $1$-foliations to $2$-foliations.

\paragraph{Acknowledgements}
This paper is a part of the author's PhD pursuit at the University of Amsterdam supervised by Diletta Martinelli and Mingmin Shen.

We would like to thank Piotr Achinger, Fabio Bernasconi, Federico Bongiorno, Lukas Brantner, Paolo Cascini, Zsolt Patakfalvi, Calum Spicer, Roberto Svaldi, and Joe Waldron for discussions about purely inseparable Galois theories, foliations, and related topics.

\newpage
\section{Power Towers}\label{section2}

In this section, we study power towers. We recall some preliminaries. The most important one is a notion of a $p$-basis \ref{Coordinates}. We define power towers and we prove basic facts about them that they are towers of extensions of exponents at most $1$ by \ref{Exponents1}, 
and that degrees of subsequent extensions are nonincreasing \ref{NonincreasingOrder}. After that, we move to comparing power towers with subfields. We define first integrals \ref{W.infty}. We show that subfields of finite exponents inject into power towers \ref{FiniteLength}. 
However, not all power towers are power towers of subfields. We give an example of this in \ref{NonAlgInt}. Finally, we introduce notions of $n$-foliations, and we show that power towers of separable subfields are $\infty$-foliation \ref{fibr into foli}. 

\paragraph{Preliminaries}
A basic theory of fields can be found in any good textbook on algebra, e.g. in Section 09FA on \cite{Stacks}, in \cite{Jacobson2}, or in \cite{Algebra2}. 

The following is a combination of 09HC, and 030K from \cite{Stacks}.

\begin{Lem}\label{separableclosure}
Let $L/K$ be an extension of fields. There is a maximal separable subextension $L/K^s/K$ such that $K^s/K$ is separable, where the maximality means that if $L/F/K$ is a subextension such that $F/K$ is separable, then $F \subset K^s$. Moreover, if $L/K$ is algebraic, then $L/K^s$ is purely inseparable.

The field $K^s$ is called a \emph{separable closure} of $K$ in $L$.
\end{Lem}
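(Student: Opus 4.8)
The plan is to construct $K^s$ as the set of all elements of $L$ that are separably algebraic over $K$ together with everything generated by a separably generated (i.e. separable, possibly transcendental) subextension—but the cleanest route is to invoke the two cited Stacks tags and merely stitch them together. Tag 030K provides the existence of a maximal separable subextension $L/K^s/K$ for an arbitrary extension: one checks that the collection of subextensions $F/K$ with $F/K$ separable is closed under directed unions (separability of a field extension is detected on finitely generated subextensions, and a directed union of separable subextensions is separable), so Zorn's lemma yields a maximal element $K^s$. The only genuinely substantive point is that this maximal element actually \emph{contains} every separable subextension, not just that it is maximal among them; this is where tag 030K does real work, using that the compositum $F_1 \cdot F_2$ inside $L$ of two separable subextensions is again separable over $K$, so a maximal separable subextension absorbs all others.

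First I would state the directedness claim precisely: if $F_1/K$ and $F_2/K$ are subextensions of $L/K$ with each $F_i/K$ separable, then the compositum $F_1 F_2 \subset L$ is separable over $K$. This reduces, by the finitely-generated criterion for separability (a field extension is separable iff every finitely generated subextension is separably generated, equivalently geometrically reduced—tag 030W/030I), to the case of finitely generated $F_i$, where one can argue via separating transcendence bases or via the tensor-product/linear-disjointness characterization $F_1 \otimes_K K^{1/p}$ reduced. Granting this, the poset of separable subextensions is directed, Zorn applies, and any maximal element $K^s$ satisfies: for any separable $F/K$, the compositum $K^s F$ is separable, hence by maximality $K^s F = K^s$, i.e. $F \subset K^s$. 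This gives both existence and the stated maximality property.

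For the last sentence—if $L/K$ is algebraic then $L/K^s$ is purely inseparable—I would argue by contradiction using tag 09HC: suppose $\alpha \in L \setminus K^s$; since $L/K$ is algebraic, $\alpha$ is algebraic over $K^s$, and writing its minimal polynomial over $K^s$, if it were separable then $K^s(\alpha)/K^s$ would be a separable extension, hence (since separability is transitive: $K^s(\alpha)/K^s$ separable and $K^s/K$ separable imply $K^s(\alpha)/K$ separable, tag 02JF) $K^s(\alpha)/K$ would be separable, contradicting maximality of $K^s$. Therefore the minimal polynomial of every $\alpha \in L$ over $K^s$ is purely inseparable, i.e. $L/K^s$ is purely inseparable. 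Finally I would note that $K^s$ is unique: it is simultaneously the unique maximal separable subextension, so there is nothing further to prove.

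The main obstacle is the compositum-of-separable-is-separable step, since "separable" for a non-algebraic extension means separably generated / geometrically reduced rather than "every element has separable minimal polynomial," so one cannot argue element-by-element; the transitivity and base-change stability of separability (all available in the cited Stacks section) are exactly what make the argument go through, and I would simply cite them rather than reprove them.
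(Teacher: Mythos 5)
The paper offers no argument of its own here: it presents the lemma as a direct citation of Stacks tags 09HC and 030K, whose actual content concerns elements that are \emph{separably algebraic} over $K$. Your proposal instead tries to reconstruct a proof via Zorn's lemma plus the claim that the compositum inside $L$ of two separable subextensions is again separable over $K$, with ``separable'' taken in the general (separably generated / geometrically reduced) sense. That compositum claim is the load-bearing step of your argument, and it is false. Take $K=\mathbb{F}_p(s,t)$ and $L=K(x,w)$ with $w^p=sx^p+t$, where $x$ is transcendental over $K$. Both $F_1=K(x)$ and $F_2=K(w)$ are purely transcendental over $K$ (note $w$ is transcendental over $K$ since $x^p=(w^p-t)/s$), hence separable; but their compositum is $L$, and $\dim_L\Omega_{L/K}=2$ while $\operatorname{trdeg}_K L=1$, so $L/K$ is not separably generated. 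Since any subextension of a separable extension is separable, no separable subfield of $L$ can contain both $F_1$ and $F_2$, so there is no ``absorbing'' maximal separable subextension in your sense: Zorn still produces maximal elements, but they are not unique and do not contain all separable subextensions. You explicitly identify this compositum step as the crux and explicitly reject the element-by-element route; unfortunately the element-by-element route is the one that works, and it is what the cited tags actually do.

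The statement the paper needs (and the one the Stacks tags deliver) is about the \emph{separable algebraic} closure: $K^s$ is the set of elements of $L$ that are separably algebraic over $K$, which is a subfield because sums, products, and inverses of separably algebraic elements are separably algebraic; the maximality clause should be read for separable algebraic subextensions $F/K$ (which is automatic in the ``moreover'' case, where $L/K$ is algebraic, and is how the paper actually uses the lemma, e.g.\ for $k(x_i)^s$ with $x_i$ a transcendence basis). Your treatment of the final claim is essentially right in outline but also has a small gap: showing the minimal polynomial of $\alpha$ over $K^s$ is \emph{inseparable} is not yet showing $\alpha$ is \emph{purely inseparable} over $K^s$; one must write the minimal polynomial as $g(x^{p^e})$ with $g$ separable, observe $\alpha^{p^e}$ is then separable over $K^s$, hence over $K$ by transitivity, hence lies in $K^s$, so $g$ is linear. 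With those two repairs --- replacing the compositum-of-separable-extensions argument by the separably-algebraic-elements construction, and completing the purely inseparable step --- the proof becomes the standard one the paper is citing.
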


Here we fix notations about $p$-powers of fields.

\begin{Def}
Let $K$ be a field of characteristic $p>0$.
Let $n\ge 0$ be an integer.

We define a subfield of $p^n$-powers of $K$ by $K^{p^n}\coloneqq \{ x^{p^n}; x\in K \}$.

We define a supfield of $p^n$-roots of $K$ by $K^{1/p^n}\coloneqq \{ x^{1/p^n}; x\in K \}$, where the roots are taken in an algebraic closure of $K$.

Moreover, we define $K^{p^\infty}\coloneqq \bigcap_{n\ge 0} K^{p^n}$.

The field $K$ is called \emph{perfect} if $K=K^p$.
\end{Def}

In the literature, there are two equivalent common ways of addressing a quantification of purely inseparable extensions: a height in \cite{Algebra2}, and an exponent in \cite{Jacobson2}. We prefer using the word exponent. Here we recall what it means.

\begin{Def}
Let $K$ be a field of characteristic $p>0$.
Let $W$ be a subfield of $K$. Let $n\ge 0$ be an integer, or infinity.
Then the extension $K/W$ is of \emph{exponent $\le n$} if  $K\supset W\supset K^{p^n}$. And, it is of exponent $n$, if the $n$ is minimal such.
\end{Def}

Here is an example of purely inseparable extension that is not of finite exponent.

\begin{Example}
Let $k$ be a prefect field of characteristic $p>0$. Let $K=k(x_i)_{i\ge 1}$, and $W=k(x_i^{p^i})$. Then $K/W$ is purely inseparable, but the exponent is infinite.
\end{Example}

However, in good situations, all exponents of purely inseparable subfields are finite.

\begin{Lem}\label{finite exponents}
Let $K$ be a field of characteristic $p>0$ such that $K/K^{p^\infty}$ is a finitely generated field extension. Then any subfield $W\subset K$ such that $K/W$ is purely inseparable is of finite exponent.
\end{Lem}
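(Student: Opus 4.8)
The plan is to exploit the structure of a $p$-basis. Since $K/K^{p^\infty}$ is finitely generated, and in fact $K^{p^\infty} = k$ is perfect, the extension $K/k$ admits a finite $p$-basis: there exist $x_1,\dots,x_n \in K$ such that $K = k(x_1,\dots,x_n)$ and, more to the point, for every $m \ge 0$ the field $K^{p^m}$ is obtained by adjoining the $p^m$-th powers, i.e. $K = K^{p^m}(x_1,\dots,x_n)$ with the monomials $x_1^{a_1}\cdots x_n^{a_n}$, $0 \le a_i < p^m$, forming a basis of $K$ over $K^{p^m}$. (This is exactly the content of the $p$-basis preliminaries referenced in \ref{Coordinates}; that $K/k$ finitely generated forces a finite $p$-basis is standard.) In particular $[K : K^{p^m}] = p^{mn} < \infty$ for every finite $m$.

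Now let $W \subset K$ be a subfield with $K/W$ purely inseparable. First I would check that $[K:W]$ is finite. Each $x_i$ is purely inseparable over $W$, so there is a minimal $e_i \ge 0$ with $x_i^{p^{e_i}} \in W$; since $K = W(x_1,\dots,x_n)$, we get $[K:W] \le \prod_i p^{e_i} < \infty$. Set $e \coloneqq \max_i e_i$. Then $x_i^{p^e} \in W$ for all $i$, hence $K^{p^e} = k(x_1^{p^e},\dots,x_n^{p^e}) \subset W$ — here using that $k = K^{p^\infty} \subset K^{p^e} \subset W$ is automatic once the generators land in $W$, because $k \subset W^{p^{\infty}} \subset W$ would need a short argument, or more simply $k \subset K^{p^e}$ directly since $k = k^{p^e}$. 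Thus $K \supset W \supset K^{p^e}$, so $K/W$ has exponent $\le e$, which is finite.

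The only mild subtlety, and the step I would be most careful about, is justifying that $k = K^{p^{\infty}}$ is contained in $W$: this is where one uses that $k$ is perfect (so $k = k^{p^e} \subset K^{p^e}$) rather than anything about $W$ itself, so in fact $K^{p^e} \supset k$ for every $e$, and combined with $K^{p^e} = k(x_1^{p^e},\dots,x_n^{p^e}) \subset W$ we are done. Everything else is bookkeeping with the finite $p$-basis. I do not anticipate a genuine obstacle here; the lemma is essentially a compactness statement — finitely many generators each become $p$-power-bounded over $W$, so a single uniform bound works.
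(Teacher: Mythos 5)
Your overall strategy is the same as the paper's: observe that $K/W$ is generated by finitely many elements, note that each generator has some finite exponent over $W$, and take the maximum. The paper does this without invoking a $p$-basis at all: it takes any finite generating set $x_1,\ldots,x_n$ of $K/W$ and concludes $K^{p^A}=W^{p^A}(x_1^{p^A},\ldots,x_n^{p^A})\subset W$ once $A$ dominates all the exponents. Your $p$-basis bookkeeping is a harmless elaboration of the same idea, so the two arguments do not genuinely differ in route.

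However, the ``mild subtlety'' you flag is a genuine gap, and your attempted repair is circular. You need $k\subset W$ in two places: to get $K=W(x_1,\ldots,x_n)$ (hence $[K:W]<\infty$), and to conclude $k(x_1^{p^e},\ldots,x_n^{p^e})\subset W$. The observation that $k=k^{p^e}\subset K^{p^e}$ only yields $k\subset W$ \emph{after} you already know $K^{p^e}\subset W$, which is exactly what you are trying to prove; and $k\subset W^{p^\infty}$ is simply false in general. Indeed, $k\subset W$ does not follow from $K/W$ being purely inseparable: take $k=\bigcup_{n}\mathbb{F}_p(t^{1/p^n})$, $K=k(x)$, and $W=\mathbb{F}_p(t,x)$. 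Then $K^{p^\infty}=k$ and $K/k$ is finitely generated, and $K/W$ is purely inseparable (every element of $K$ lies in some $W(t^{1/p^N})$, which is purely inseparable over $W$), yet $[K:W]=\infty$ and no $K^{p^A}$ is contained in $W$, since $t^{1/p}\in k\subset K^{p^A}$ but $t^{1/p}\notin\mathbb{F}_p(t,x)$. So the step you were worried about is precisely where the argument breaks, and the lemma as stated needs the extra hypothesis $k\subset W$ (which holds in all of the paper's applications); note that the paper's own proof makes the same implicit assumption when it calls $K/W$ a ``subextension of $K/K^{p^\infty}$''. Once $k\subset W$ is granted, your argument is correct and complete.
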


\begin{proof}
    The extension $K/W$ is finitely generated, because it is a subextension of $K/K^{p^\infty}$ that is finitely generated by the assumption. Therefore, there are finitely many $x_i \in K$ such that $W(x_i)=K$. For each $i$ there is an integer $a_i$ such that $x_i^{p^{a_i}}\in W$. Let $A$ be the maximal number among them, then $W\supset K^{p^A}$. 
\end{proof}

The following is about existence of separating transcendence basis that we will need in some arguments. For the definitions of notions of a $p$-basis and of a $p$-independence see the Section 07P0 at \cite{Stacks}, of a separating transcendence basis and its existence see \cite{MacLane}, and see the Corollary A1.5 in \cite{Eisenbud} for the proof that they coincide in the case below.

\begin{Def}\label{Coordinates} 
Let $K$ be a field of characteristic $p>0$. Assume that $K/K^{p^\infty}$ is a finitely generated field extension. Then this extension admits a $p$-basis: elements $x_i$ such that $dx_i$ are a $K$-basis of $\Omega_{K/K^{p^\infty}}$,

and a separating transcendence basis: elements $x_i\in K$ that are algebraically independent over $K^{p^\infty}$ and $K^{p^\infty}(x_i)^s=K$. 

These notions coincide for $K/K^{p^\infty}$. 

Moreover, any set of $p$-independent elements for $K/K^{p^\infty}$: elements $x_i\in K$ such that $dx_i$ are $K$-linearly independent, can be completed to a $p$-basis.
\end{Def}

\paragraph{Basic Properties of Power Tower}

We assume that $K$ is a field of characteristic $p>0$ such that $K/k$ is a finitely generated field extension, where $k=K^{p^\infty}$.

\begin{Def}\label{powertower}
A \textbf{\emph{power tower}} on $K$ is a sequence of subfields $W_\bullet$ of $K$ such that $\bullet=0,1,2,\ldots$, and $W_j = W_i \cdot K^{p^j}$ for $j\le i$. 
\end{Def}

In the above definition, the dot sign stands for a composite of fields inside $K$. It is convenient to represent such a tower in a form of a diagram.

\begin{center}
    \begin{tikzcd}
    K=W_0 & W_1\ar[l] & K^p \ar[l]\\
    K \ar[u,equal]& W_2\ar[l]\ar[u] & K^{p^2}\ar[l]\ar[u] \\
    \ldots \ar[u,equal]& \ldots\ar[l]\ar[u] & \ldots\ar[l]\ar[u] \\
    K \ar[u,equal]& W_m\ar[l]\ar[u] & K^{p^m}\ar[l]\ar[u] \\
    \ldots \ar[u,equal]& \ldots\ar[l]\ar[u] & \ldots\ar[l]\ar[u] 
    \end{tikzcd}
\end{center}

On this diagram, we can notice two examples of power towers on $K$.

\begin{Example}\label{TrivialPowerTowers}
A constant tower $W_n\coloneqq K$ is a power tower on $K$. A tower of $p$-powers of $K$, namely $W_n\coloneqq K^{p^n}$, is a power tower on $K$.
\end{Example}

Two main properties of power towers are the following.

\begin{Lem}\label{Exponents1}
Let $W_\bullet$ be a power tower on $K$, then each extension $W_i/W_{i+1}$ is of exponent at most $1$, where $i\ge 0$. 
\end{Lem}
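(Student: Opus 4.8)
The plan is to establish both inclusions $W_{i+1} \subseteq W_i$ and $W_i^p \subseteq W_{i+1}$, since ``exponent at most $1$'' means precisely $W_i \supseteq W_{i+1} \supseteq W_i^p$. The first inclusion is immediate from the defining relation of a power tower applied to the pair of indices $i \le i+1$: it gives $W_i = W_{i+1} \cdot K^{p^i}$, and a composite of fields contains each of its factors, so $W_{i+1} \subseteq W_i$.

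For the nontrivial inclusion $W_i^p \subseteq W_{i+1}$, I would start from the same identity $W_i = W_{i+1} \cdot K^{p^i}$ and apply the Frobenius endomorphism $x \mapsto x^p$ to both sides. The key point is that Frobenius is a ring homomorphism in characteristic $p$, so it carries the subfield generated by $W_{i+1} \cup K^{p^i}$ onto the subfield generated by the images; that is, $(A \cdot B)^p = A^p \cdot B^p$ for any two subfields $A, B$ of $K$. Applying this yields $W_i^p = W_{i+1}^p \cdot K^{p^{i+1}}$.

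It then remains to observe that both factors on the right-hand side lie inside $W_{i+1}$: the inclusion $W_{i+1}^p \subseteq W_{i+1}$ is automatic, since Frobenius maps a field into itself, and $K^{p^{i+1}} \subseteq W_{i+1}$ follows from the power tower relation for the pair $i+1 \le i+1$, namely $W_{i+1} = W_{i+1} \cdot K^{p^{i+1}}$. Hence $W_i^p \subseteq W_{i+1}$, which together with the first paragraph gives the claim.

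The only step that requires any care is the identity $(A \cdot B)^p = A^p \cdot B^p$; everything else is a direct unwinding of Definition \ref{powertower}. I expect this identity to be the (minor) crux, and it is justified simply by the fact that the composite of two subfields is functorial under the field homomorphism Frobenius.
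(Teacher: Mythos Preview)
Your proof is correct and follows essentially the same route as the paper: both use the defining relation $W_i = W_{i+1}\cdot K^{p^i}$, apply Frobenius to obtain $W_i^p = W_{i+1}^p \cdot K^{p^{i+1}}$ via the identity $(A\cdot B)^p = A^p \cdot B^p$, and then note that the right-hand side is contained in $W_{i+1}\cdot K^{p^{i+1}} = W_{i+1}$. The only difference is that you also spell out the trivial inclusion $W_{i+1}\subseteq W_i$, which the paper leaves implicit.
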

    
\begin{proof}
We have to proof that $W_{i+1} \supset W_i^p$. This follows from the following
\[
W_i^p={\left(W_{i+1}\cdot K^{p^i}\right)}^{p}=W_{i+1}^p\cdot K^{p^{i+1}}
\subset W_{i+1} \cdot K^{p^{i+1}}=W_{i+1},
\]
where, in the second equality, we used that taking $p$-powers commutes with taking composites. The last equality is from the Definition \ref{powertower}.
\end{proof}

\begin{Prop}[Nonincreasing Degrees]\label{NonincreasingOrder}
Let $W_\bullet$ be a power tower on $K$. It satisfies the following sequence of inequalities:
\[\op{dim}_{W_1}(K)\ge \op{dim}_{W_2}(W_1)\ge
\ldots\ge\op{dim}_{W_n}(W_{n-1})
\ge \ldots \ge 0.
\]
\end{Prop}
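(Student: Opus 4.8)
The plan is to reduce the inequality $\op{dim}_{W_{n}}(W_{n-1}) \ge \op{dim}_{W_{n+1}}(W_n)$ to a statement about $p$-bases, exploiting that by Lemma \ref{Exponents1} each step $W_{n-1}/W_n$ has exponent at most $1$, so $W_n \supset W_{n-1}^p$ and the degree $\op{dim}_{W_n}(W_{n-1})$ equals $p^{r_n}$ where $r_n = \op{dim}_{W_n}\bigl(\Omega_{W_{n-1}/W_n}\bigr)$ is the size of a $p$-basis of $W_{n-1}$ over $W_n$. So it suffices to show $r_n \ge r_{n+1}$, i.e. that a $p$-basis of $W_n$ over $W_{n+1}$ has at most as many elements as one of $W_{n-1}$ over $W_n$.

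First I would make the key observation that raising to the $p$-th power gives a field isomorphism $K \xrightarrow{\sim} K^p$ which carries $W_{n-1} \mapsto W_{n-1}^p$ and $W_n \mapsto W_n^p$, hence an isomorphism of extensions $W_{n-1}/W_n \cong W_{n-1}^p/W_n^p$; in particular $\op{dim}_{W_n^p}(W_{n-1}^p) = \op{dim}_{W_n}(W_{n-1})$. Next, using the power-tower relations $W_n = W_{n+1}\cdot K^{p^n}$ and $W_{n-1} = W_n \cdot K^{p^{n-1}}$, I would compute $W_n^p \cdot K^{p^{n+1}} = W_{n+1}^p \cdot K^{p^{n+1}} \subset W_{n+1}$ (this is exactly the computation in Lemma \ref{Exponents1}), and similarly relate $W_{n-1}^p$ to $W_n$. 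The upshot is a commutative square of fields
\[
\begin{array}{ccc}
W_{n-1}^p & \subset & W_n\\
\cup & & \cup\\
W_n^p & \subset & W_{n+1}
\end{array}
\]
in which $W_n = W_{n-1}^p \cdot W_{n+1}$ — i.e. $W_n$ is generated over $W_{n+1}$ by $W_{n-1}^p$. Then $\op{dim}_{W_{n+1}}(W_n) = \op{dim}_{W_{n+1}}\bigl(W_{n-1}^p \cdot W_{n+1}\bigr) \le \op{dim}_{W_n^p}(W_{n-1}^p) = \op{dim}_{W_n}(W_{n-1})$, where the middle inequality is the standard fact that base change can only decrease (or preserve) the degree of a finite extension: a generating set, or a $p$-basis, of $W_{n-1}^p$ over $W_n^p$ still generates $W_{n-1}^p\cdot W_{n+1}$ over $W_{n+1}$. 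Finiteness of all these degrees follows from Lemma \ref{finite exponents} applied to $W_1 \subset K$ (so $\op{dim}_{W_1}(K) < \infty$) together with the fact that each subsequent degree is bounded by the previous one; alternatively each $r_n$ is at most the size of a $p$-basis of $K/k$, which is finite by Definition \ref{Coordinates}.

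The main obstacle I anticipate is bookkeeping the field composites correctly — in particular verifying the identity $W_n = W_{n-1}^p \cdot W_{n+1}$ cleanly from the defining relations (one must check both inclusions, the nontrivial one being $W_n \subset W_{n-1}^p\cdot W_{n+1}$, which uses $W_n = W_{n+1}\cdot K^{p^n}$ and $K^{p^n} = (K^{p^{n-1}})^p \subset (W_{n-1})^p$ since $K^{p^{n-1}} \subset W_{n-1}$). Once that identity is in hand, the degree inequality is immediate from base-change of finite field extensions, and linking $\op{dim}_{W_{n+1}}(W_n)$ to a power of $p$ via $p$-bases is only needed if one wants the sharper statement $p^{r_i} = \op{dim}_{W_{i+1}}(W_i)$ with $r_i$ nonincreasing; for the bare inequality stated in the proposition, the composite-and-base-change argument suffices.
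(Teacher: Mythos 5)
Your proof is correct, but it takes a genuinely different route from the paper's. The paper argues ``upwards'': it chooses a minimal generating set $x_1,\ldots,x_r\in K^{p^{n+1}}$ of $W_{n+1}$ over $W_{n+2}$, shows that the $p$-th roots $x_i^{1/p}$ generate a subextension $W_{n+1}(x_i^{1/p})/W_{n+1}$ of $W_n/W_{n+1}$ whose degree is exactly $\op{dim}_{W_{n+2}}(W_{n+1})$ (a nontrivial linear relation among monomials in the $x_i^{1/p}$ would, after raising to the $p$-th power, contradict minimality of the $x_i$), and concludes by multiplicativity of degrees in the tower $W_{n+1}\subset W_{n+1}(x_i^{1/p})\subset W_n$. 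You instead argue ``downwards'': Frobenius identifies $W_{n-1}/W_n$ with $W_{n-1}^p/W_n^p$, the composite identity $W_n=W_{n-1}^p\cdot W_{n+1}$ holds (both inclusions check out: $W_{n-1}^p\subset W_n$ and $W_n^p\subset W_{n+1}$ are Lemma \ref{Exponents1}, while $W_n=W_{n+1}\cdot K^{p^n}\subset W_{n+1}\cdot W_{n-1}^p$ because $K^{p^{n-1}}\subset W_{n-1}$), and then the standard fact that the degree of a finite extension can only drop under passing to a composite with a larger base field gives $\op{dim}_{W_{n+1}}(W_n)\le\op{dim}_{W_n^p}(W_{n-1}^p)=\op{dim}_{W_n}(W_{n-1})$. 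Both arguments rest on the same mechanism --- Frobenius is a field embedding, so $p$-th powering preserves independence --- but your decomposition reduces the inequality to a textbook base-change statement and avoids the explicit monomial bookkeeping, at the small cost of verifying the composite identity; the paper's version has the minor advantage of exhibiting an explicit intermediate field inside $W_n/W_{n+1}$ realizing the smaller degree. Your remarks on finiteness and on $p$-bases are fine but not needed for the bare inequality, as you yourself note.
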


\begin{proof}
Let $n\ge 0$. We are going to show that $\op{dim}_{w_{n+1}}(W_{n})\ge \op{dim}_{w_{n+2}}(W_{n+1})$ by writing some generators of these extensions explicitly. Let $x_i\in K^{p^{n+1}}$, where $i=1,\ldots, r$, be a minimal subset such that $W_{n+1}=W_{n+2}(x_i)$. Notice that $x_i^p\in W_{n+2}$ by the Lemma \ref{Exponents1}. 
We claim that $\op{dim}_{W_{n+2}(x_i)}(W_{n+2}(x_i^{1/p}))=\op{dim}_{W_{n+2}}(W_{n+1})$. Indeed, if not, then there is a nontrivial $(W_{n+2}(x_i))$-linear relation $F$ between monomials $\prod_{i=1}^r {\left(x_i^{1/p}\right)}^{a_i}$, where $0\le a_i \le p-1$, but then $F^p$ is a $W_{n+2}$-linear relation for monomials $\prod_{i=1}^r {\left(x_i\right)}^{a_i}$ since $W_{n+2}(x_i)^p\subset W_{n+2}$. This contradicts the definition of $x_i$'s.

Now, we have a series of extensions 
\[
W_{n+2}\subset W_{n+1} \subset W_{n+1}(x_i^{1/p}) \subset W_{n}
\]
that gives us 
\begin{align*}
   \op{dim}_{W_{n+1}}(W_n)&=\op{dim}_{W_{n+1}(x_i^{1/p})}(W_n)\cdot \op{dim}_{W_{n+1}}(W_{n+1}(x_i^{1/p}))
   \\ & \ge \op{dim}_{W_{n+1}}(W_{n+1}(x_i^{1/p}))=\op{dim}_{w_{n+2}}(W_{n+1}). 
\end{align*}
\end{proof}

A special class of power towers are the ones that are eventually constant. They are characterized by having a finite length. 

\begin{Def}\label{Length}
We say that a power tower $W_\bullet$ on $K$ is of \emph{length at most $n$} if $W_N=W_n$ for $N\ge n$. It is of length $n$, if the $n$ is minimal such.
\end{Def}

A trivial observation is that a power tower $W_\bullet$ of length $n$ is determined by $W_n$. Indeed, if $i<n$, then $W_i=W_n\cdot K^{p^i}$ by the Definition \ref{powertower}, and if $i>n$, then $W_i=W_n$ by the Definition \ref{Length}. Consequently, we can conclude the following.

\begin{Cor}\label{FiniteLength}
Let $n$ be a positive integer. There is a bijection between subfields of $K$ of exponent $n$, and power towers on $K$ of length $n$. Explicitly, this is given by $W_\bullet\mapsto W_n$. $\qed$
\end{Cor}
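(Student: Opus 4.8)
The plan is to exhibit an explicit inverse to the map $\Phi\colon W_\bullet\mapsto W_n$ and verify that both composites are the identity. The candidate inverse is the power-tower operation $\Psi\colon W\mapsto (W\cdot K^{p^\bullet})_{\bullet\ge 0}$. So the proof splits into three routine verifications: that $\Psi$ lands in power towers of length exactly $n$, that $\Phi$ lands in subfields of exponent exactly $n$, and that $\Phi\circ\Psi$ and $\Psi\circ\Phi$ are identities.

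First I would check that $\Psi(W)$ is a power tower of length exactly $n$ whenever $W\subset K$ has exponent $n$. It is a power tower because, for $j\le i$, $(W\cdot K^{p^i})\cdot K^{p^j}=W\cdot K^{p^i}\cdot K^{p^j}=W\cdot K^{p^j}$, using $K^{p^i}\subset K^{p^j}$. Since $K^{p^n}\subset W$, we get $W\cdot K^{p^N}=W$ for all $N\ge n$, so the length is $\le n$; and if it were $\le n-1$ we would have $W=W\cdot K^{p^{n-1}}\supset K^{p^{n-1}}$, contradicting minimality of the exponent. Symmetrically, for a power tower $W_\bullet$ of length $n$ the relation of Definition \ref{powertower} with $i=j=n$ gives $W_n=W_n\cdot K^{p^n}\supset K^{p^n}$, so $W_n$ has exponent $\le n$; and if $K^{p^{n-1}}\subset W_n$ then $W_{n-1}=W_n\cdot K^{p^{n-1}}=W_n$, which together with $W_N=W_n$ for $N\ge n$ forces length $\le n-1$, a contradiction. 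Hence $\Phi$ is well defined into subfields of exponent exactly $n$.

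For the composites: $\Phi(\Psi(W))=W\cdot K^{p^n}=W$ because $K^{p^n}\subset W$. Conversely, given a power tower $W_\bullet$ of length $n$, I must show $W_n\cdot K^{p^i}=W_i$ for every $i\ge 0$. For $i\le n$ this is exactly the power-tower relation of Definition \ref{powertower}; for $i>n$ we have $W_i=W_n$ by Definition \ref{Length}, and $W_n\cdot K^{p^i}=W_n$ since $K^{p^i}\subset K^{p^n}\subset W_n$. This is precisely the ``trivial observation'' recorded just before the statement, and it shows $\Psi\circ\Phi=\mathrm{id}$.

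There is no genuine obstacle here; the only point requiring a word of care is the matching of the two minimality conditions — that a subfield has minimal exponent $n$ exactly when its power tower stabilises no earlier than stage $n$ — and this is disposed of by the two short contradiction arguments above.
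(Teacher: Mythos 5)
Your proof is correct and follows essentially the same route as the paper, whose entire argument is the ``trivial observation'' preceding the corollary that a power tower of length $n$ is determined by $W_n$ via $W_i=W_n\cdot K^{p^i}$ for $i<n$ and $W_i=W_n$ for $i>n$. You merely make explicit the two minimality checks (exponent exactly $n$ versus length exactly $n$) that the paper leaves implicit.
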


In particular, the above says that subfields $W$ of $K$ such that $K/W$ is a purely inseparable extension are included in power towers. This is a special case of the Proposition \ref{infty=s}.

\paragraph{Power Towers of Subfields}\label{Subfields}
We introduce an operation from subfields of $K$ to power towers on $K$. It is one of main sources of power towers.

\begin{Def}\label{PTsubfield}
Let $W$ be a subfield of $K$. A power tower of $W$ on $K$ is defined by $W_\bullet \coloneqq W\cdot K^{p^\bullet}$, where $\bullet\ge 0$.

We say that $W_n$ is $W$ up to $p^n$-powers of $K$.
\end{Def}

We do a reality check.

\begin{Lem}
Let $W$ be a subfield of $K$. Then $W_\bullet$ is a power tower on $K$.
\end{Lem}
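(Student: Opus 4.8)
The plan is to check the defining identity of a power tower (Definition \ref{powertower}) directly, with no machinery beyond the elementary behaviour of composite fields inside $K$. First I would record the one inclusion that makes everything work: for integers $j\le i$ we have $K^{p^i}\subseteq K^{p^j}$, because any element $x^{p^i}\in K^{p^i}$ can be rewritten as $\bigl(x^{p^{i-j}}\bigr)^{p^j}\in K^{p^j}$. I would also note, for the sake of the ``reality check'', that $W\cdot K^{p^n}$ is indeed a subfield of $K$: it is the compositum of two subfields of $K$, hence a subfield of $K$.

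Next, for $j\le i$ I would compute
\[
W_i\cdot K^{p^j}=\bigl(W\cdot K^{p^i}\bigr)\cdot K^{p^j}
= W\cdot\bigl(K^{p^i}\cdot K^{p^j}\bigr)
= W\cdot K^{p^j}=W_j,
\]
where the middle step uses associativity of the compositum operation inside $K$, and the equality $K^{p^i}\cdot K^{p^j}=K^{p^j}$ follows from the inclusion $K^{p^i}\subseteq K^{p^j}$ established above. This is exactly the condition required in Definition \ref{powertower}, so $W_\bullet$ is a power tower on $K$.

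There is essentially no obstacle here; the statement is a formal consequence of the definitions. The only point that merits a word of care is the monotonicity $K^{p^i}\subseteq K^{p^j}$ for $j\le i$ — the indexing convention in Definition \ref{powertower} requires the compositum with the \emph{smaller}-exponent power to be the absorbing one, and the computation above is arranged so that this is what gets used.
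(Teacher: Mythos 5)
Your proof is correct and follows essentially the same route as the paper's: both reduce the claim to associativity of the compositum together with the absorption $K^{p^i}\cdot K^{p^j}=K^{p^j}$ for $j\le i$. Your additional remark justifying the inclusion $K^{p^i}\subseteq K^{p^j}$ is a harmless extra detail the paper leaves implicit.
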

    
\begin{proof}
We have to show that $(W\cdot K^{p^n})\cdot K^{p^m}=W\cdot K^{p^m}$ for $0\le m\le n$. 
This follows from the fact that taking composite is associative. Indeed, $(W \cdot K^{p^n})\cdot K^{p^m}=W\cdot (K^{p^n}\cdot K^{p^m})=W\cdot K^{p^m}$
\end{proof}

For example, trivial power towers, the Example \ref{TrivialPowerTowers}, are power towers on $K$ of $K$, and $K^{p^{\infty}}$ respectively. And, power towers of subfields of finite exponent are power towers of finite length, the Corollary \ref{FiniteLength}. Here is another example.

\begin{Example}
Let $k$ be a perfect field of characteristic $p>0$.
Let $K=k(x,y)$, and $W=k(x)$. Then $W_n=k(x,y^{p^n})$. Moreover, we have $\op{dim}_{W_{n+1}}(W_{n})=p$ for every $n\ge 0$. Hence, it is a power tower of a constant degree.
\end{Example}

A name in the next definition is borrowed from a theory of foliations. In the theory of foliations, a foliation that is a relative tangent bundle of a morphism is called algebraically integrable, because its leaves are algebraic subvarieties, see  the Lemma 3.2.4 in \cite{Bongiorno}.

\begin{Def}
We say that a power tower $V_\bullet$ on $K$ is \emph{algebraically integrable} if there is a subfield $W$ of $K$ such that $V_\bullet=W_\bullet$, where $W_\bullet$ is a power tower of the subfield $W$ on $K$.
\end{Def}

By the Corollary \ref{FiniteLength}, we know that power towers of finite length are algebraically integrable, but there are power towers of infinite length that are not. This resembles a situation from a characteristic zero, where a question if a foliation is algebraically integrable is nontrivial.

\begin{Example}[Not Algebraically Integrable Power Tower]\label{NonAlgInt}
Let $k$ be a perfect field of characteristic $p>0$.
Let $K=k(x,y)$. We define
\[
W_n\coloneqq k(x+y^p+\ldots + y^{p^{n-1}},y^{p^{n}}).
\]
The tower $W_\bullet$ is a power tower on $K$, but it is not algebraically integrable. 

Indeed, if it was a power tower of a subfield $W$, then we would have $W\subset W\cdot k\subset \bigcap_{n\ge 0} W_n=k$, and, consequently, $W\cdot k=k$. But then, we would have that $W_n=W\cdot K^{p^n}=K^{p^n}$. It is a contradiction.

We provide more examples like this one in the Example \ref{examples}.
\end{Example}

In the above example, we used an object $\bigcap_{n\ge 0} W_n$. This is analogical to \emph{first integrals} from a theory of foliations, so we are going to call it such as well. In general, first integrals are functions on a space that are constant along solutions of a system of differential equations on that space, see \cite{firstintegrals}.

\begin{Def}\label{W.infty}
Let $W_\bullet$ be a power tower on $K$. We define a field 
\[
W_\infty\coloneqq \bigcap_{n\ge 0} W_n.
\] 
We call it a field of \emph{first integrals} of the power tower $W_\bullet$.
\end{Def}

First integrals make a recovery of a subfield $W$ from its power tower on $K$ possible if $W$ contains $k$, and $W$ is separably closed in $K$. We prove it later in the Proposition \ref{infty=s}, because our proof uses some facts about differential operators from the next section \ref{section3}.

\paragraph{$n$-Foliations}
In a paper \cite{Ekedahl87}, Ekedahl introduced a notion of a $n$-foliation. These are subalgebras of differential operators on a variety satisfying an extra condition. If one transports his definition verbatim to the case of fields, and then uses our correspondence \ref{MainTheorem} to get a power tower, then they would end up with the following notion of a $n$-foliation for fields.

\begin{Def}\label{n-foliation}
Let $0\le n \le \infty$.
Let $W_\bullet$ be a power tower on $K$.  It is called a \emph{$n$-foliation} if it is of length $n$, and it satisfies $\op{dim}_{W_{i+1}}(W_{i})=\op{dim}_{W_{j+1}}(W_j)$ for $0\le i,j< n$. 
    
A rank of a $n$-foliation $W_\bullet$ is  $\op{log}_p(\op{dim}_{W_{1}}(K))$.
\end{Def}

\paragraph{Fibrations Into $\infty$-Foliations}

We prove a positive characteristic version of an injection of smooth dominant morphisms into foliations for fields. 
We label it an \textit{injection of fibrations into $\infty$-foliations}.
We start with an example.

\begin{Example}\label{transcedental => infty-foli}
Let $K$ be a field of characteristic $p$ such that $K/k$ is a finitely generated field extension, where $k=K^{p^\infty}$.

Let $K\supset W\supset k$ be both purely transcendental extensions. 
Then, a power tower of $W$ on $K$ is a $\infty$-foliation. Indeed, we can write $W=k(x_1,\ldots,x_r)$ and $K=W(y_1,\ldots y_m)$, where $x_i$ and $y_i$ are transcendental bases for $W/k$ and $K/W$ respectively. 
Then 
$W_n=W(y_1^{p^n},\ldots y_m^{p^n})=k(x_1,\ldots x_r,y_1^{p^n},\ldots y_m^{p^n})$.
So, $\op{dim}_{W_n}(W_{n-1})=p^m$ is constant. Therefore, $W_\bullet$ is a $\infty$-foliation on $K$ of rank $m$.
\end{Example}

The above example is a special case of the injection. Before stating the general case, we recall the definition of (not finite) separable extensions of fields, see 030O at \cite{Stacks}.

\begin{Def}\label{separable}
    A finitely generated extension of fields $A/B$ is separable if it admits a separating transcendence basis, i.e. there are algebraically independent over $B$ elements $x_i\in A$ such that $A$ is a separable closure of $B(x_i)$ in $A$.
\end{Def}

Here is the statement of the injection.

\begin{Prop}[Fibrations Inject Into $\infty$-Foliations]\label{fibr into foli}
Let $K$ be a field of characteristic $p>0$ such that $K/k$ is a finitely generated field extension, where $k=K^{p^\infty}$.

A power tower of a subfield $k\subset W\subset K$ on $K$ such that $K/W$ is separable is a $\infty$-foliation on $K$.
\end{Prop}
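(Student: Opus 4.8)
\emph{Plan of proof.} The idea is to reduce the constant‑degree condition of Definition \ref{infty-foli} to the single degree computation $[K:W\cdot K^{p^n}]=p^{nm}$ for all $n$, where $m=\op{trdeg}_W(K)$, and then to carry out that computation with $p$-bases, the separability of $K/W$ entering at exactly one point.

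\textbf{Reduction.} Since $K/W$ is separable and finitely generated, $m=\op{trdeg}_W(K)$ is finite and equals the size of a separating transcendence basis (Definition \ref{separable}). As $W_\bullet$ is the power tower of $W$, we have $W_0=K$ and $W_n=W\cdot K^{p^n}$, each $W_i/W_{i+1}$ is finite of degree $p^{d_i}$ with $d_i\ge 0$ by Lemma \ref{Exponents1}, and $d_0\ge d_1\ge\cdots$ by Proposition \ref{NonincreasingOrder}. Multiplicativity of degrees along $K=W_0\supset W_1\supset\cdots\supset W_n$ gives $\sum_{i=0}^{n-1}d_i=\log_p[K:W\cdot K^{p^n}]$. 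Thus, once $[K:W\cdot K^{p^n}]=p^{nm}$ is known for every $n\ge 1$, we get $\sum_{i=0}^{n-1}d_i=nm$ for all $n$, and subtracting the relations for $n$ and $n-1$ forces $d_{n-1}=m$ for every $n$. Hence $\op{dim}_{W_{i+1}}(W_i)=p^m$ is constant and $W_\bullet$ is an $\infty$-foliation.

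\textbf{Setting up $p$-bases.} As $k$ is perfect, $k=k^{p^\infty}\subseteq W^{p^\infty}\subseteq K^{p^\infty}=k$, so $W^{p^\infty}=k$; since $W/k$ is finitely generated, Definition \ref{Coordinates} applies to $W$ and gives a $p$-basis $v_1,\dots,v_r$ of $W/k$ with $r=\op{trdeg}_k(W)$, $[W:W^{p^n}]=p^{nr}$ and $W=W^{p^n}(v_1,\dots,v_r)$ for all $n$. Because $K/W$ is separable, the canonical map $\Omega_{W/k}\otimes_W K\to\Omega_{K/k}$ is injective (the standard cotangent criterion for separability of a finitely generated field extension), so $dv_1,\dots,dv_r$ stay $K$-linearly independent in $\Omega_{K/k}$; that is, $v_1,\dots,v_r$ is a $p$-independent subset of $K$ over $k=K^{p^\infty}$, and extends to a $p$-basis of $K/k$ by Definition \ref{Coordinates}. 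Applying that definition to $K$ as well, a $p$-basis of $K/k$ has $\op{trdeg}_k(K)=r+m$ elements, so $[K:K^{p^n}]=p^{n(r+m)}$ and the power monomials of exponents $<p^n$ in $v_1,\dots,v_r$ are $K^{p^n}$-linearly independent (being part of a $K^{p^n}$-basis of $K$).

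\textbf{The computation, and the main point.} From $W=W^{p^n}(v_1,\dots,v_r)$ and $W^{p^n}\subseteq K^{p^n}$ we obtain $W\cdot K^{p^n}=K^{p^n}(v_1,\dots,v_r)$; since $v_i^{p^n}\in K^{p^n}$ and the power monomials of exponents $<p^n$ in the $v_i$ are $K^{p^n}$-independent, $[W\cdot K^{p^n}:K^{p^n}]=p^{nr}$, hence $[K:W\cdot K^{p^n}]=[K:K^{p^n}]/[W\cdot K^{p^n}:K^{p^n}]=p^{n(r+m)}/p^{nr}=p^{nm}$, which closes the reduction. The routine ingredients are the degree bookkeeping and the standard facts about $p$-bases; the genuine obstacle is controlling the composite $W\cdot K^{p^n}$, in which $W$ and $K^{p^n}$ can overlap in a complicated way. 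The resolution is that a $p$-basis of $W/k$ remains $p$-independent inside $K$, and this is precisely where separability of $K/W$ is used via the injectivity of $\Omega_{W/k}\otimes_W K\to\Omega_{K/k}$; dropping that hypothesis the degree $[K:W\cdot K^{p^n}]$ may drop and the tower need not have constant degree.
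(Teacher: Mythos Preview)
Your argument is correct. It is, however, organised differently from the paper's proof, and it is worth noting the contrast. The paper chooses a separating transcendence basis of $K/W$ together with a $p$-basis of $W/k$, reduces to the purely transcendental model $k(x_i)\subset k(x_i,y_j)$ via Example~\ref{transcedental => infty-foli}, and then pulls back the constant degree along the tower through a tensor-product identification $k(x_i,y_j^{p^n})\otimes_{k(x_i,y_j^{p^{n+1}})}W_{n+1}\simeq W_n$ and flatness. You instead compute $[K:W\cdot K^{p^n}]$ directly: a $p$-basis of $W/k$ stays $p$-independent in $K$ by the cotangent injectivity criterion for separability, so $W\cdot K^{p^n}=K^{p^n}(v_1,\dots,v_r)$ has degree $p^{nr}$ over $K^{p^n}$, and dividing into $[K:K^{p^n}]=p^{n(r+m)}$ gives $p^{nm}$; telescoping then forces each step to have degree $p^m$. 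Your route avoids tensor products and flatness and needs only one choice of $p$-basis, making it more elementary; the paper's route, on the other hand, produces the extra structural information $W_n=k(x_i,y_j^{p^n})^s$, which identifies each stage of the tower explicitly. Both proofs hinge on the same point---separability of $K/W$ ensures that a $p$-basis of $W/k$ extends to a $p$-basis of $K/k$---but you invoke it through $\Omega_{W/k}\otimes_W K\hookrightarrow\Omega_{K/k}$, while the paper phrases it as concatenating separating transcendence bases.
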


\begin{proof}
We can assume that $W$ is separably closed in $K$ since power towers of $W$ and $W^s$ coincide.

Let $y_i$ be a $p$-basis of $W$ over $k$, see the Definition \ref{Coordinates}. Let $x_i$ be a separating transcendence basis of $K$ over $W$. Then $x_i,y_i$ are a $p$-basis of $K$ over $k$.
In particular, we have that $k(x_i)^s=W$, $W(y_j)^s=K$, and $k(x_i,y_j)^s =K$. 

From the Example \ref{transcedental => infty-foli}, we know that a power tower of $k(x_i)$ on $k(x_i,y_j)$ is a $\infty$-foliation. The rest of the proof is just pulling back this information to a power tower of $W$ on $K$. 

Let $n\ge 0$ be an integer. 
We have that $k(x_i, y_j^{p^n})^s=W_n$, where the separable closure is taken in $K$. 
Moreover, we claim that
\[
    k(x_i, y_j^{p^{n}})\otimes_{k(x_i, y_j^{p^{n+1}})} W_{n+1} \simeq W_{n+1}(y_j^{p^n}) = W_{n}
\]
via natural maps to $K$. 
Of course, we have that $W_{n+1}(y_j^{p^n}) \subset W_{n}$.
We also have $W_{n+1}(y_j^{p^n})=W_{n+1}(y_j^{p^n})^s\supset k(x_i, y_j^{p^n})^s=W_n$. Therefore $ W_{n+1}(y_j^{p^n}) = W_{n}$.

Finally, we can conclude that a dimension of $ k(x_i, y_j^{p^{n}})$ over $ k(x_i, y_j^{p^{n+1}})$ is the same as a dimension of $W_n$ over $W_{n+1}$, because the later comes from the former by extending coefficients to $W_{n+1}$ and that operation is flat.
\end{proof}

\newpage
\section{Differential Operators}\label{section3}

This section is devoted to recalling differential operators in positive characteristic. First, we define these operators \ref{diffoper}, describe a filtration by orders, and embed them into additive operators \ref{embedding}. This makes a set of all of differential operators an algebra. This algebra admits a second filtration that we call a canonical filtration by finite subalgebras \ref{canonical}. After that, we demonstrate that any field extension induces a surjective differential between algebras of differential operators \ref{differential}. This extends a well known differential between tangent spaces. Moreover, we recall how to write down a differential operator in coordinates \ref{formulas}, and how these formulas behave with respect to differentials between $p$-powers \ref{symbol.differential}.

\paragraph{Construction}
This paragraph is based mostly on sections 16.3 and 16.8 from \cite{16.8}. 

\begin{Def}\label{diffoper}
Let $K/W$ be an extension of fields. Let $p_1,p_2: K\to K\otimes_W K$ be projections. They are $a\mapsto a\otimes 1, 1\otimes a$ respectively. 
We consider $K\otimes_W K$ as a $K$-algebra via $p_1$.
Let $\Delta: K\otimes_W K\to K$ be a diagonal map. It is given by $a\otimes b\mapsto ab$. Let $J_{K/W}$ be the kernel of $\Delta$. Let $n\ge 0$ be an integer.

For the extension, we define its space of \emph{principal parts} of order at most $n$ by $\mathcal{P}^{\le n}_{K/W}\coloneqq K \otimes_W K/ J^{n+1}_{K/W}$.

We define a set of differential operators of order at most $n$ on $K$ relative to $W$ by $\op{Diff}^{\le n}_{W}(K)\coloneqq \op{Hom}_K(\mathcal{P}^{\le n}_{K/W}, K)$.

A set of all \emph{differential operators} on $K$ relative to $W$ is defined by $\op{Diff}_{W}(K)\coloneqq \bigcup_{n\ge 0} \op{Diff}^{\le n}_{W}(K)$, where the identifications come from natural surjections of spaces of principal parts.
\end{Def}

Here are some formal observations following from the construction.

\begin{Def}\label{Ekedahl algebra}
The tensor product $K\otimes_W K$ decomposes into $K \oplus J_{K/W}$ by the choice of $p_1$, i.e. it is an augmented $K$-algebra. This implies that $\op{Diff}_{W}(K)$ decomposes too. We denote this decomposition by $K \oplus \E(K/W)$.
\end{Def}

In the above, we can observe that $\E(K/W)\cap \op{Diff}^{\le 1}_{W}(K)$ is $T_{K/W}$.

\begin{Obs}\label{embedding}
By using $p_2$, we can embed the set $\op{Diff}_{W}(K)$ into $\op{End}_W(K)$. This map is given by 
\[
\phi: \mathcal{P}^{\le n}_{K/W} \to K \mapsto D_\phi\coloneqq \phi \circ p_2: K\to K\otimes_W K \to \mathcal{P}^{\le n}_{K/W} \to K,
\]
where $n$ is an integer. It is an injection, and the image is closed under composition. This makes the set of differential operators an algebra of operators on $K$. It is covered in the Propositions 16.8.4 and 16.8.9 in \cite{16.8}. Consequently, the algebra of differential operators is a filtered algebra in a natural way. The filtration comes from subsets of order at most $n$. We call this filtration an \emph{order filtration}.

Moreover, the image of $\op{Diff}^{\le n}_{W}(K)$ is precisely a set of operators $D$ such that for any $n+1$ elements $a_0,a_1,\ldots,a_n\in K$ we have $[a_0,[a_1,[\ldots,[a_n,D]\ldots]=0$, where $[a,b]=a\circ b-b\circ a$. And, the image of $\E(K/W)$ are differential operators $D$ such that $D(1)=0$.
\end{Obs}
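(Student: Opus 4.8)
The plan is to dispatch the four assertions separately, proving the elementary ones by hand and isolating the single substantial input, which I would import from \cite{16.8}. For the embedding, a differential operator of order $\le n$ is by definition a $K$-linear map $\phi\colon \mathcal{P}^{\le n}_{K/W}\to K$, and the composite $p_2\colon K\to K\otimes_W K\to \mathcal{P}^{\le n}_{K/W}$ is $W$-linear; hence $D_\phi=\phi\circ p_2$ is $W$-linear and lies in $\op{End}_W(K)$. For injectivity I would note that $K\otimes_W K$ is generated as a left $K$-module, the action being through $p_1$, by the elements $1\otimes a$, since $a'\otimes a=(a'\otimes 1)(1\otimes a)=a'\cdot(1\otimes a)$; this survives the quotient by $J^{n+1}_{K/W}$, so the classes of the $1\otimes a$ generate $\mathcal{P}^{\le n}_{K/W}$ over $K$. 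As $\phi$ is $K$-linear and $\phi(1\otimes a)=D_\phi(a)$, the operator $D_\phi$ determines $\phi$, and $\phi\mapsto D_\phi$ is injective.

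For closure under composition and the additivity of orders I would invoke the comultiplication on principal parts from \cite{16.8}: the canonical $K$-algebra homomorphism $\mathcal{P}^{\le m+n}_{K/W}\to \mathcal{P}^{\le m}_{K/W}\otimes_K\mathcal{P}^{\le n}_{K/W}$ (Proposition 16.8.4) dualizes to a pairing $\op{Diff}^{\le m}_W(K)\times \op{Diff}^{\le n}_W(K)\to \op{Diff}^{\le m+n}_W(K)$, which one checks agrees with composition inside $\op{End}_W(K)$ (Proposition 16.8.9). This realizes $\op{Diff}_W(K)$ as a subalgebra of $\op{End}_W(K)$, filtered exhaustively by the $K$-subspaces of order $\le n$.

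The commutator characterization is the substantive part. For $a\in K$ put $\delta_a\coloneqq 1\otimes a-a\otimes 1\in J_{K/W}$, and let $a$ also denote the multiplication operator. Using the $K$-linearity of $\phi$ through $p_1$, a direct computation gives, for $b\in K$,
\[
[a,D_\phi](b)=a\,\phi(1\otimes b)-\phi(1\otimes ab)=\phi(a\otimes b-1\otimes ab)=-\phi\!\left(\delta_a\cdot(1\otimes b)\right).
\]
Since $\delta_a\in J_{K/W}$, multiplication by $\delta_a$ carries $J^n_{K/W}$ into $J^{n+1}_{K/W}$ and therefore descends to a $K$-linear map $\mathcal{P}^{\le n-1}_{K/W}\to \mathcal{P}^{\le n}_{K/W}$; precomposing $-\phi$ with it exhibits $[a,D_\phi]$ as $D_\psi$ for some $\psi\in\op{Diff}^{\le n-1}_W(K)$. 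Thus commuting with an element of $K$ lowers the order by one, and iterating shows that every operator of order $\le n$ is annihilated by all $(n+1)$-fold iterated commutators, the base case being that order $\le 0$ means $K$-linear, hence killed by a single commutator. For the converse, that annihilation by all $(n+1)$-fold commutators forces order $\le n$, I would argue by induction on $n$: each $[a,D]$ then satisfies the order-$(n-1)$ identity, hence is some $D_{\psi_a}$, and one must reassemble this family into a single functional on $\mathcal{P}^{\le n}_{K/W}$. This reconstruction is the main obstacle; it is exactly where the comultiplication (groupoid) structure of \cite{16.8} is needed, and I would cite it rather than reprove it.

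Finally, for the subalgebra $\E(K/W)$, the section $p_1$ of $\Delta$ splits $K\otimes_W K=p_1(K)\oplus J_{K/W}$ as $K$-modules, whence $\op{Diff}^{\le n}_W(K)=K\oplus \op{Hom}_K\!\left(J_{K/W}/J^{n+1}_{K/W},K\right)$, the second summand being $\E(K/W)$. An operator $D_\phi$ lies in $\E(K/W)$ precisely when $\phi$ vanishes on $p_1(K)=K\cdot(1\otimes 1)$, i.e. when $\phi(1\otimes 1)=0$; since $D_\phi(1)=\phi(p_2(1))=\phi(1\otimes 1)$, this is exactly the condition $D_\phi(1)=0$.
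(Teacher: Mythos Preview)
Your proposal is correct and in fact considerably more detailed than what the paper does: the paper gives no proof at all for this Observation, merely citing Propositions 16.8.4 and 16.8.9 of \cite{16.8} for the injection and the closure under composition, and stating the commutator characterization and the description of $\E(K/W)$ without argument. Your direct verification of injectivity, the computation $[a,D_\phi](b)=-\phi(\delta_a\cdot(1\otimes b))$ for the forward direction of the commutator criterion, and the splitting argument for $\E(K/W)$ are all sound and supply content the paper simply takes for granted; your deferral of the converse of the commutator criterion to \cite{16.8} matches the paper's own reliance on that reference.
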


The order filtration is important, and any mention of a graded algebra of differential operators is exactly
the graded algebra associated to this filtration.

\paragraph{Canonical Filtration}
We show that in positive characteristic an algebra of differential operators admits another filtration. The new one is given by finite subalgebras corresponding to a power tower of $K^{p^{\infty}}$ on $K$. We call it a canonical filtration. It is a well known structure, see for example \cite{Smith}, but it is usually written with $\op{End}$ instead of $\op{Diff}$. Anyway, our Lemma \ref{end=diff} says that both ways are identical.

Let $K$ be a field of characteristic $p>0$ such that $K/k$ is a finitely generated field extension, where $k= K^{p^\infty}$.

\begin{Def}\label{canonical}
We define $\op{Diff}(K)\coloneqq \op{Diff}_k(K)$, and $\op{Diff}_{p^n}(K)\coloneqq \op{Diff}_{K^{p^n}}(K)$.

We call a set of $\op{Diff}_{p^\bullet}(K)$, where $\bullet \ge 0$,  a \emph{canonical filtration} of $\op{Diff}(K)$.
\end{Def}

We start with a useful observation.

\begin{Lem}\label{end=diff}
Let $K/W$ be a subfield of finite exponent, then we have $\op{Diff}_W(K)=\op{End}_W(K)$.
\end{Lem}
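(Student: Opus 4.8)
The inclusion $\op{Diff}_W(K)\subseteq \op{End}_W(K)$ is exactly the embedding of Observation \ref{embedding}, so all the content lies in the reverse inclusion. The plan is to show that the diagonal ideal $J_{K/W}$ is nilpotent; then the spaces of principal parts $\mathcal{P}^{\le n}_{K/W}$ stabilise at $K\otimes_W K$ itself, and the adjunction $\op{Hom}_K(K\otimes_W K,K)\cong \op{End}_W(K)$ finishes the job.

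First I would record that $K/W$ is a \emph{finite} extension. Writing $n$ for the exponent of $K/W$, we have $k=K^{p^\infty}\subset K^{p^n}\subset W$, so $K/W$ is a subextension of the finitely generated extension $K/k$ and hence finitely generated, say $K=W(a_1,\ldots,a_s)$. Since $a_i^{p^n}\in W$, each $a_i$ is algebraic over $W$ with $[W(a_i):W]\le p^n$, whence $[K:W]\le p^{ns}<\infty$. In particular $K\otimes_W K$ is a finite, hence Noetherian, $K$-algebra.

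The key step is that $J_{K/W}$ is nilpotent. Set $t_i\coloneqq a_i\otimes 1-1\otimes a_i\in J_{K/W}$; these generate $J_{K/W}$ as an ideal (the standard fact that the diagonal ideal is generated by $a\otimes 1-1\otimes a$ for $a$ ranging over algebra generators). Because $a_i^{p^n}\in W$, we get $t_i^{p^n}=a_i^{p^n}\otimes 1-1\otimes a_i^{p^n}=0$. Since $K\otimes_W K$ is commutative, any product of $m\coloneqq s(p^n-1)+1$ of the $t_i$ must, by pigeonhole, repeat some $t_i$ at least $p^n$ times and therefore vanish; hence $J_{K/W}^{\,m}=0$. (Alternatively: $J_{K/W}$ is a nil ideal in a Noetherian ring, hence nilpotent.)

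Consequently $\mathcal{P}^{\le m-1}_{K/W}=(K\otimes_W K)/J_{K/W}^{\,m}=K\otimes_W K$, so $\op{Diff}^{\le m-1}_W(K)=\op{Hom}_K(K\otimes_W K,K)$, where $K\otimes_W K$ is viewed as a $K$-module via $p_1$. The tensor--hom adjunction gives a natural isomorphism $\op{Hom}_K(K\otimes_W K,K)\xrightarrow{\sim}\op{Hom}_W(K,K)=\op{End}_W(K)$ sending $\phi$ to $a\mapsto\phi(1\otimes a)$, with inverse $D\mapsto(b\otimes a\mapsto b\,D(a))$ by left $K$-linearity; by construction this is precisely the assignment $\phi\mapsto D_\phi=\phi\circ p_2$ of Observation \ref{embedding}. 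Hence $\op{End}_W(K)\subseteq\op{Diff}_W(K)$, giving the desired equality. The only slightly delicate point is the claim that the $t_i$ generate $J_{K/W}$, which I expect to be the main (mild) obstacle; it follows from the identity $b\otimes a-ba\otimes 1=b\,(1\otimes a-a\otimes 1)$ applied to monomials in the $a_i$, which span $K$ over $W$.
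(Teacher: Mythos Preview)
Your proof is correct. The paper's own proof is a one-line dimension count: since $\op{Diff}_W(K)\subset\op{End}_W(K)$ and both have the same finite $K$-dimension, they coincide. Your argument unpacks exactly what that dimension count requires---namely, that $J_{K/W}$ is nilpotent so that $\mathcal{P}^{\le n}_{K/W}$ stabilises at $K\otimes_W K$---and then replaces the dimension comparison by the explicit tensor--hom identification $\op{Hom}_K(K\otimes_W K,K)\cong\op{End}_W(K)$, checked to agree with the embedding $\phi\mapsto\phi\circ p_2$. So the two proofs have identical content; yours is simply more explicit about the nilpotence step that the paper leaves implicit in the phrase ``compute from the construction''.
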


\begin{proof}
We know that one is a subalgebra of the other by \ref{embedding}. And, we can compute from the construction that they have the same finite dimensions over $K$. Therefore, they are equal.
\end{proof}

\begin{Rem}
The Lemma \ref{end=diff} is not true if a subfield $K/W/k$ is not of finite exponent.
\end{Rem}

We check that the canonical filtration is a filtration.

\begin{Prop}
Algebras $\op{Diff}_{p^\bullet}(K)$ are subalgebras of $\op{Diff}(K)$, and we have $\op{Diff}_{p^{n}}(K)\subset\op{Diff}_{p^{n+1}}(K)$ for $n\ge 0$, and $\op{Diff}(K)=\bigcup_{n\ge 0} \op{Diff}_{p^n}(K)$.
\end{Prop}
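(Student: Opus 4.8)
The plan is to verify the three assertions in order, using the identity $\op{Diff}_W(K) = \op{End}_W(K)$ for subfields of finite exponent (Lemma \ref{end=diff}) together with the embedding $\op{Diff}_W(K) \hookrightarrow \op{End}_W(K)$ of Observation \ref{embedding}. First I would note that for any tower $k \subset K^{p^n} \subset K$, the composite $p_2$-embedding realizes $\op{Diff}_W(K)$ as a genuine $K$-subalgebra of $\op{End}_\Z(K)$, so "subalgebra of $\op{Diff}(K)$" just amounts to checking that $\op{Diff}_{p^n}(K)$ is closed under the algebra operations inside $\op{Diff}(K) = \op{Diff}_k(K)$ and contains $K$; both are immediate from the construction, since a $W$-linear differential operator relative to $W \supset k$ is in particular a $k$-linear differential operator, and $K$ sits inside each as the augmentation summand of Definition \ref{Ekedahl algebra}.

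For the inclusions $\op{Diff}_{p^n}(K) \subset \op{Diff}_{p^{n+1}}(K)$, the point is simply that $K^{p^{n+1}} \subset K^{p^n}$, so every operator that is linear over $K^{p^n}$ is a fortiori linear over $K^{p^{n+1}}$; after the $p_2$-embedding this is the statement $\op{End}_{K^{p^n}}(K) \subset \op{End}_{K^{p^{n+1}}}(K)$, which is tautological. Here one should invoke Lemma \ref{end=diff} to know these $\op{Diff}$'s literally equal the corresponding $\op{End}$'s (each $K/K^{p^n}$ has finite exponent by Lemma \ref{finite exponents}, using that $K/k$ is finitely generated), so there is no subtlety about the order filtration — the containment is just restriction of scalars.

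The only assertion requiring a small argument is the exhaustion $\op{Diff}(K) = \bigcup_{n \ge 0} \op{Diff}_{p^n}(K)$. The inclusion $\supseteq$ follows from the first part since each $\op{Diff}_{p^n}(K) \subset \op{Diff}_k(K) = \op{Diff}(K)$. For $\subseteq$, take $D \in \op{Diff}_k(K)$; by definition $D \in \op{Diff}^{\le m}_k(K)$ for some $m$, so by the commutator characterization in Observation \ref{embedding} we have $[a_0,[a_1,[\ldots,[a_m,D]\ldots] = 0$ for all $a_0,\ldots,a_m \in K$. It suffices to show $D$ is linear over $K^{p^n}$ for $n$ large; equivalently $[c, D] = 0$ for every $c \in K^{p^n}$. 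Now pick a $p$-basis $x_1,\ldots,x_r$ of $K/k$ (Definition \ref{Coordinates}); then $K^{p^n}$ is generated over $k$ by the $x_i^{p^n}$, and $k$ is central in $\op{Diff}_k(K)$, so it is enough to make each $[x_i^{p^n}, D] = 0$. Writing $[x_i^{p^n}, D]$ via the Leibniz-type expansion and using that $D$ has order $\le m$, one sees $[x_i^{p^n}, D]$ vanishes as soon as $p^n > m$, because iterating the commutator $p^n$ times with the single element $x_i^{p^n}$ (or, more carefully, expanding $x_i^{p^n} = (x_i)^{p^n}$ and using that an order-$\le m$ operator is killed by $m+1$ nested commutators with elements of $K$) forces cancellation. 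I would therefore choose $n$ with $p^n > m$, so that $D \in \op{End}_{K^{p^n}}(K) = \op{Diff}_{p^n}(K)$. The main obstacle — and the only step needing care — is this last bookkeeping: making precise why an order-$\le m$ differential operator automatically commutes with $p^n$-th powers once $p^n$ exceeds $m$; the clean way is to reduce to one generator at a time, note $[x_i^{p^n}, -]$ lowers order and, since $x_i^{p^n} = (x_i)^{p^n}$ is a $p^n$-th power, expand it so that the relevant nested commutator has length $> m$ and hence is zero by the order bound.
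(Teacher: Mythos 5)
Your treatment of the first two assertions matches the paper's (subalgebra structure from the $p_2$-embedding of Observation \ref{embedding}, the inclusions from Lemma \ref{end=diff} and restriction of scalars). For the exhaustion $\op{Diff}(K)=\bigcup_n\op{Diff}_{p^n}(K)$ you take a genuinely different route: the paper works on the coalgebra side, comparing the ideal $J_{p^n}\subset K\otimes_k K$ generated by $p^n$-th powers of elements of $J_{K/k}$ with the powers of $J_{K/k}$ (so that $J_{K/k}^{N}\supset J_{p^n}$ for $N\le p^n$ yields $\op{Diff}^{\le p^{n}-1}_k(K)\subset\op{Diff}_{p^n}(K)$), whereas you work on the operator side with the iterated-commutator characterization of order. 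The two are dual formulations of the same fact: the containment $a^{p^n}\otimes 1-1\otimes a^{p^n}=(a\otimes 1-1\otimes a)^{p^n}\in J_{K/k}^{p^n}$ corresponds exactly to the identity $\operatorname{ad}(a^{p^n})=\operatorname{ad}(a)^{p^n}$, which holds in characteristic $p$ because $\operatorname{ad}(a)=L_a-R_a$ with $L_a,R_a$ commuting and the middle binomial coefficients of $(L_a-R_a)^{p^n}$ vanishing mod $p$. Your approach buys a proof that never leaves $\op{End}_\Z(K)$; the paper's stays closer to the definition of $\mathcal{P}^{\le n}$ and gives the explicit order bound quoted later.

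Two points need tightening. First, the step you yourself flag ("forces cancellation") is exactly where you must state $\operatorname{ad}(a^{p^n})=\operatorname{ad}(a)^{p^n}$ explicitly: the commutator with a product is \emph{not} a nested commutator, so "expanding $x_i^{p^n}=(x_i)^{p^n}$" does not by itself produce a length-$p^n$ nested commutator; the displayed identity is what does. With it, $[c,D]=\operatorname{ad}(b)^{p^n}(D)=0$ for any $c=b^{p^n}$ once $p^n>m$, since each $\operatorname{ad}(b)$ lowers order. Second, your reduction to a $p$-basis is both unnecessary and slightly off: $K^{p^n}$ is not generated over $k$ by the $x_i^{p^n}$ as a field --- it is only a separable algebraic extension of $k(x_1^{p^n},\ldots,x_r^{p^n})$ --- so concluding $[c,D]=0$ for all $c\in K^{p^n}$ from the generators requires an extra argument (e.g.\ that the centralizer of $D$ in $K$ is a subfield, plus a separability step). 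Since every element of $K^{p^n}$ is already a $p^n$-th power of an element of $K$, you should simply apply the $\operatorname{ad}$-identity to an arbitrary $c=b^{p^n}$ and drop the $p$-basis entirely.
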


\begin{proof}
    These algebras are subalgebras from the construction of the embedding in the Observation \ref{embedding}. The inclusions are trivial from the Lemma \ref{end=diff}.

    Let $J_{p^n}$ be an ideal in $K\otimes_k K$ generated by $p^n$-powers of elements from $J_{K/k}$, then $K\otimes_{K^{p^n}} K=K\otimes_k K/J_{p^n}$. So, it is enough to compare the ideal $J_{p^n}$ with powers of the ideal $J_{K/k}$. For example, we have that $J_{K/k}^{p^{n-1}}\supset J_{p^n}$. Therefore, we have
\[
\op{Diff}_{k}^{\le p^{n-1}}(K) \subset \op{Diff}_{p^n}(K).
\] 
Hence, the ascending sum is everything.
\end{proof}

A corollary of the above proposition is an infamous fact that differential operators in positive characteristic are not finitely generated. In characteristic zero, the finite generation is true. Even more, they are generated in degree $1$ by derivations. 

\begin{Cor}
A $K$-algebra $\op{Diff}(K)$ is not finitely generated unless $K=k$.
\end{Cor}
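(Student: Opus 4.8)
The plan is to prove the contrapositive style statement by exhibiting, under the assumption $K \neq k$, an infinite strictly ascending chain inside the canonical filtration, which is incompatible with finite generation. First I would recall from the preceding Proposition that $\op{Diff}(K) = \bigcup_{n \geq 0} \op{Diff}_{p^n}(K)$ with $\op{Diff}_{p^n}(K) = \op{End}_{K^{p^n}}(K)$ by Lemma \ref{end=diff}. Each of these is a finite-dimensional $K$-algebra; in fact $\op{dim}_K \op{End}_{K^{p^n}}(K) = \op{dim}_{K^{p^n}}(K) = p^{n \cdot r}$ where $r = \op{dim}_K \Omega_{K/k}$ is the size of a $p$-basis of $K/k$ (using Definition \ref{Coordinates}). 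Since $K \neq k$, the extension $K/k$ is not trivial, so $r \geq 1$, and hence these dimensions $p^{nr}$ strictly increase with $n$. In particular the chain $K = \op{Diff}_{p^0}(K) \subsetneq \op{Diff}_{p^1}(K) \subsetneq \op{Diff}_{p^2}(K) \subsetneq \cdots$ is strictly ascending and its union is all of $\op{Diff}(K)$.

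The key step is then the following standard observation: if a $K$-algebra $\mathcal{A}$ is finitely generated as a $K$-algebra, say by $d_1, \ldots, d_s$, and $\mathcal{A} = \bigcup_{n} \mathcal{A}_n$ is the union of an ascending chain of $K$-subalgebras, then $\mathcal{A} = \mathcal{A}_N$ for some $N$; indeed each generator $d_i$ lies in some $\mathcal{A}_{n_i}$, and taking $N = \max_i n_i$, all generators lie in $\mathcal{A}_N$, so the subalgebra they generate — which is all of $\mathcal{A}$ — is contained in $\mathcal{A}_N$. Applying this to $\mathcal{A} = \op{Diff}(K)$ and the canonical filtration would force $\op{Diff}(K) = \op{Diff}_{p^N}(K)$, contradicting that $\op{Diff}_{p^N}(K) \subsetneq \op{Diff}_{p^{N+1}}(K) \subset \op{Diff}(K)$.

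To make the dimension count fully rigorous I would compute $\op{dim}_K \mathcal{P}^{\le n}_{K/W}$ for $K/W = K/K^{p^n}$ directly: choosing a $p$-basis $x_1, \ldots, x_r$ of $K/k$, the elements $x_i^{p^n}$ lie in $K^{p^n}$ and the residues of the monomials $\prod_i (x_i)^{a_i}$ with $0 \le a_i \le p^n - 1$ form a $K$-basis of $K \otimes_{K^{p^n}} K$, which (since the extension has exponent $\le n$) already equals $\op{Diff}_{p^n}(K) = \op{End}_{K^{p^n}}(K)$ via Lemma \ref{end=diff} — there is no need to pass to principal parts since the Jacobson radical of $J_{K/K^{p^n}}$ is nilpotent and the whole finite-dimensional local-type structure collapses. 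Hence $\op{dim}_K \op{Diff}_{p^n}(K) = p^{nr}$.

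The main obstacle is purely bookkeeping: one must be careful that the embeddings in the canonical filtration are the genuine inclusions coming from Observation \ref{embedding} (so that "finitely generated" is being interpreted with respect to the correct algebra structure), and one must confirm $r \geq 1$ precisely when $K \neq k$ — which is exactly the statement that $\Omega_{K/k} \neq 0$ iff $K \neq K^{p^\infty}$, immediate from the $p$-basis description in Definition \ref{Coordinates}. Once these are in place, the argument is a short combination of the dimension growth $p^{nr} \to \infty$ with the elementary lemma on finitely generated algebras exhausted by ascending chains.
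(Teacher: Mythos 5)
Your proposal is correct and follows essentially the same route as the paper: both arguments exhibit the canonical filtration $\op{Diff}(K)=\bigcup_n \op{Diff}_{p^n}(K)$ as a strictly ascending chain of proper finite subalgebras (proper because $[K:K^{p^n}]$ grows when $K\ne k$) and then observe that finitely many generators would all have to lie in a single $\op{Diff}_{p^N}(K)$, forcing the generated subalgebra to be proper. Your explicit dimension count $p^{nr}$ is a slightly more detailed justification of the properness of the inclusions, but the core argument is identical to the paper's.
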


\begin{proof}
    If $K\ne k$, then each extension $K/K^{p^n}$ is nontrivial, of finite dimension, and the dimensions grow with $n$. Hence, subalgebras $\op{Diff}_{p^n}(K)$ are finite and the inclusions between them are proper. Assume that $\op{Diff}(K)$ is finitely generated by elements $D_i$, then each of these operators has a finite order. In particular, each $D_i$ belongs to a subalgebra $\op{Diff}_{p^n}(K)$ for some $n$. This means that there must be a single $N$ such that all $D_i$ are in $\op{Diff}_{p^N}(K)$, but this subset is a subalgebra, therefore the subalgebra generated by all $D_i$ would be a subalgebra of $\op{Diff}_{p^N}(K)$. This is a contradiction, because this subalgebra is $\op{Diff}(K)$.
\end{proof}

\paragraph{Explicit Formulas via Coordinates}\label{formulas}
We establish calculus-like formulas for writing down differential operators in coordinates.

Let $K$ be a field of characteristic $p>0$ such that $K/k$ is a finitely generated field extension, where $k=K^{p^\infty}$.

We know from the Definition \ref{Coordinates} that $K/k$ admits a $p$-basis. Let $x_1,x_2,\ldots, x_N$ be a $p$-basis of $K/k$, where $N$ is a transcendence degree of $K$ over $k$. Then (16.11.1) from \cite{16.8} says that  monomials in $dx_i\coloneqq p_2(x_i)-p_1(x_i)$ that are nonzero in $\mathcal{P}_{K/k}^{n}$ are a basis of $\mathcal{P}_{K/k}^{n}$ (the empty monomial $1$ is included). We write these monomials using a multi-index notation $d(x)^e\coloneqq\prod d(x_i)^{e_i}$, where $e_i \ge 0$. We denote a dual basis by $D^e$. These can be identified with classical symbols.

\begin{Notation}\label{symbol}
We define a symbol $\frac{1}{a!}\frac{\partial^{a}}{\partial x_i ^{a}}$ to be $D^e$ for $e=(0,\ldots,0, a, 0,\ldots,0)$, where $a\ge 0$ is at the $i$-th position. 
\end{Notation}

\begin{Lem}[(16.11.1.7) in \cite{16.8}]\label{obvious.values}
We have that  $\frac{1}{a!}\frac{\partial^{a}}{\partial x_i ^{a}}(x_i^b)={{b}\choose{a}} x_i^{b-a}$, 
and $\frac{1}{a!}\frac{\partial^{a}}{\partial x_i ^{a}}(x_j^b)=0$ 
for $i\ne j$, $a,b\ge 0$. $\qed$
\end{Lem}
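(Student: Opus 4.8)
The plan is to unwind the definitions of principal parts and of the dual basis, and then reduce everything to the binomial theorem inside the commutative ring $\mathcal{P}^{\le n}_{K/k}$. Recall from Observation \ref{embedding} and Notation \ref{symbol} that the symbol $D^e = \frac{1}{a!}\frac{\partial^{a}}{\partial x_i^{a}}$, with $e = (0,\ldots,a,\ldots,0)$ having $a$ in slot $i$, acts on $K$ by $y\mapsto D^e(p_2(y))$, where $p_2(y) = 1\otimes y$ is viewed inside $\mathcal{P}^{\le n}_{K/k}$ (a $K$-algebra via $p_1$) and $D^e\in\op{Hom}_K(\mathcal{P}^{\le n}_{K/k},K)$ is the functional dual to the monomial $d(x)^e$ in the monomial basis $\{d(x)^f\}$ furnished by (16.11.1) of \cite{16.8}. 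So the entire task is to expand $p_2(x_j^b)$ in this monomial basis and read off the coefficient of $d(x)^e$.

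First I would record the one-variable expansion. By definition $dx_l = p_2(x_l)-p_1(x_l)$, so $p_2(x_l) = x_l + dx_l$, abbreviating $p_1(x_l)$ simply by $x_l$ (this is the chosen $K$-algebra structure). Since $p_2\colon K\to K\otimes_k K$ is a ring homomorphism and $\mathcal{P}^{\le n}_{K/k}$ is commutative, $p_2(x_l^b) = (x_l+dx_l)^b = \sum_{c=0}^{b}\binom{b}{c}\,x_l^{\,b-c}\,(dx_l)^c$, and $(dx_l)^c = d(x)^{c\mathbf{e}_l}$ is the monomial supported in slot $l$ with exponent $c$ (which vanishes in $\mathcal{P}^{\le n}_{K/k}$ once $c>n$, harmlessly, since $n$ may be taken arbitrarily large, or one works in $\op{Diff}_k(K)$ directly). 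Pairing with $D^e$, $e = a\mathbf{e}_i$: if $j=i$, the only monomial occurring in $p_2(x_i^b)$ equal to $d(x)^e$ is the $c=a$ term, so $D^e(x_i^b) = \binom{b}{a}x_i^{b-a}$ (with the usual convention $\binom{b}{a}=0$ for $a>b$, so the meaningless $x_i^{b-a}$ never appears); if $j\neq i$, every monomial in $p_2(x_j^b)$ is a power of $dx_j$ alone, i.e. of the form $d(x)^{c\mathbf{e}_j}$, and for $a\ge 1$ none of these is $d(x)^{a\mathbf{e}_i}$, whence $D^e(x_j^b)=0$.

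There is essentially no obstacle here: the statement is a definitional unwinding plus the binomial theorem. The only points requiring a little care are the bookkeeping — distinguishing the ring homomorphism $p_2$ from the $K$-module structure coming from $p_1$, so that the binomial expansion is legitimate — and the explicit invocation of the monomial basis of $\mathcal{P}^{\le n}_{K/k}$ from (16.11.1), which is what pins down $D^e$ and lets one "read off coefficients." One small edge case worth flagging: for $a=0$ the symbol is the identity operator, so the second identity is to be read with $a\ge 1$ (for $a=0$ the distinction $i\ne j$ is vacuous).
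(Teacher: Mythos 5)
Your proof is correct. The paper gives no argument of its own here --- the lemma is quoted directly from (16.11.1.7) of EGA IV with an immediate $\qed$ --- and your definitional unwinding ($D^e$ dual to the monomial basis $d(x)^f$ of $\mathcal{P}^{\le n}_{K/k}$, expansion of $p_2(x_l^b)=(x_l+dx_l)^b$ by the binomial theorem in the $p_1$-algebra structure, then reading off coefficients) is exactly the standard computation behind that reference. Your flag that the second identity only makes sense for $a\ge 1$ (since $a=0$ gives the identity operator, not zero) is a legitimate small correction to the statement as printed.
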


There is a unique way to express any differential operator in terms of the above symbols.

\begin{Cor}\label{unique expression}
Let $D \in \op{Diff}(K)$ be a differential operator. Then there are unique coefficients $A_{a_1,\ldots,a_N}\in K$, almost all equal zero, such that 
\[D=\sum_{a_i\ge 0}\left(A_{a_1,\ldots,a_N}\prod_{i=1}^{N} \frac{1}{a_i!}\frac{\partial^{a_i}}{\partial x_i ^{a_i}}\right).
\]
Moreover, we have $D(\prod_{i=1}^{N} x_i^{a_i})=A_{a_1,\ldots, a_N}$. Therefore, differential operators are determined by their values on monomials $\prod_{i=1}^{N} x_i^{a_i}$.

Furthermore, elements from $\op{Diff}_{p^n}(K)$ are determined by a property: $D\in\op{Diff}_{p^n}(K)$ if and only if $A_{a_1,\ldots,a_N}=0$ if there is an $i$ such that $a_i\ge p^n$.
\end{Cor}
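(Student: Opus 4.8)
The plan is to deduce everything from the defining duality $\op{Diff}^{\le n}_k(K)=\op{Hom}_K(\mathcal P^{\le n}_{K/k},K)$, the monomial basis of principal parts cited above as (16.11.1), and the description of a differential operator as $D\circ p_2$ from Observation \ref{embedding}. Since $D\in\op{Diff}(K)=\bigcup_n\op{Diff}^{\le n}_k(K)$, first fix $n$ with $D\in\op{Diff}^{\le n}_k(K)$. As $\{d(x)^e:|e|\le n\}$ is a $K$-basis of $\mathcal P^{\le n}_{K/k}$, the dual basis $\{D^e\}_{|e|\le n}$ yields a unique expression $D=\sum_{|e|\le n}A_eD^e$ with $A_e\in K$ (equivalently $A_e=D(d(x)^e)$ when $D$ is viewed as a functional on principal parts); setting $A_e=0$ for $|e|>n$ gives a finitely supported family, and the choice of $n$ is immaterial because the relevant dual bases are compatible under the surjections $\mathcal P^{\le n+1}_{K/k}\twoheadrightarrow\mathcal P^{\le n}_{K/k}$. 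This produces existence and uniqueness of the $A_e$, provided the symbols are matched with the $D^e$.

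To match them, note that by Notation \ref{symbol} the symbol $\frac1{a!}\frac{\partial^a}{\partial x_i^a}$ is the $D^e$ whose multi-index is $a$ in position $i$ and $0$ elsewhere. From $p_2\!\left(\prod_jx_j^{b_j}\right)=\prod_j\bigl(x_j+d(x_j)\bigr)^{b_j}$ in $\mathcal P^{\le n}_{K/k}$ one reads off, for such an $e$, that $D^e\!\left(\prod_jx_j^{b_j}\right)=\binom{b_i}{a}\,x_i^{b_i-a}\prod_{j\ne i}x_j^{b_j}$ (this is Lemma \ref{obvious.values} in the single-power case). Composing the $N$ symbols in succession and pulling the scalar factors, which lie in $\mathbb F_p\subseteq k$, past the $k$-linear operators gives $\left(\prod_i\frac1{a_i!}\frac{\partial^{a_i}}{\partial x_i^{a_i}}\right)\!\left(\prod_jx_j^{b_j}\right)=\prod_j\binom{b_j}{a_j}x_j^{b_j-a_j}$, while the same $p_2$-expansion gives $D^{(a_1,\dots,a_N)}\!\left(\prod_jx_j^{b_j}\right)=\prod_j\binom{b_j}{a_j}x_j^{b_j-a_j}$ as well. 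Since the order filtration is multiplicative, both the composite and $D^{(a_1,\dots,a_N)}$ have order $\le a_1+\cdots+a_N$, so once the next step shows that equal values on all monomials force equality of operators of bounded order, the composite equals $D^{(a_1,\dots,a_N)}$, and the displayed expansion of $D$ follows.

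For the determinacy statement, feed the expansion of a general $D$ of order $\le n$ into the $p_2$-description: $D\!\left(\prod_jx_j^{b_j}\right)=\sum_{e\le b}A_e\prod_j\binom{b_j}{e_j}x_j^{b_j-e_j}$, where the coefficient $A_b$ occurs with multiplier $\prod_j\binom{b_j}{b_j}=1$. Ordering multi-indices by the product order (which is well-founded on $\mathbb N^N$), these equations form a triangular system with unit diagonal, so the coefficients $A_e$ — and hence $D$ itself — are recovered from the values $D\!\left(\prod_jx_j^{b_j}\right)$ by descending induction. In particular two differential operators of order $\le n$ with equal values on all monomials coincide, which is exactly what the previous step used, and this is the content of the assertion that differential operators are determined by their values on monomials.

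Finally, for the criterion describing $\op{Diff}_{p^n}(K)$: recall $K\otimes_{K^{p^n}}K=(K\otimes_kK)/J_{p^n}$, where $J_{p^n}$ is generated by the $p^n$-th powers of the elements of $J_{K/k}$; since Frobenius is additive and, after the localisation/étale base change making the $d(x_i)$ a regular system (the setting of (16.11.1)), $J_{K/k}$ is generated by $d(x_1),\dots,d(x_N)$, one gets $J_{p^n}=\bigl(d(x_1)^{p^n},\dots,d(x_N)^{p^n}\bigr)$. As $K\otimes_{K^{p^n}}K$ is finite local over $K$ with maximal ideal $J_{K/K^{p^n}}$, we have $\op{Diff}_{p^n}(K)=\op{Hom}_K(K\otimes_{K^{p^n}}K,K)$, embedded into $\op{Diff}(K)$ by precomposition with $K\otimes_kK\twoheadrightarrow K\otimes_{K^{p^n}}K$; hence a functional $D$ on $\bigcup_m\mathcal P^{\le m}_{K/k}$ lies in $\op{Diff}_{p^n}(K)$ iff it annihilates every $d(x)^e$ with some $e_i\ge p^n$, i.e.\ iff $A_e=0$ for all such $e$. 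The substantive point of the whole argument is the triangular-system observation of the third step (equivalently, that the associated graded of the order filtration is the divided-power algebra on $T_{K/k}$, so that $D^{e'}\circ D^{e''}=\binom{e'+e''}{e'}D^{e'+e''}$); once (16.11.1) and this are granted, the symbol identification and the $\op{Diff}_{p^n}$ criterion are routine bookkeeping with the monomial basis over $K$.
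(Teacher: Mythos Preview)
Your proof is correct and follows essentially the same route as the paper: expand $D$ in the dual basis $\{D^e\}$ of the monomial basis $\{d(x)^e\}$ of principal parts, identify $D^e$ with $\prod_i\frac{1}{e_i!}\frac{\partial^{e_i}}{\partial x_i^{e_i}}$ via Lemma~\ref{obvious.values}, and read off the $\op{Diff}_{p^n}(K)$ criterion from the explicit monomial description of $K\otimes_{K^{p^n}}K$. You are more thorough than the paper on the determinacy-by-monomials claim, supplying the triangular-system argument (and the correct formula $D(\prod_j x_j^{b_j})=\sum_{e\le b}A_e\prod_j\binom{b_j}{e_j}x_j^{b_j-e_j}$) where the paper's proof leaves this step tacit.
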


\begin{proof}
    Let $D$ be a differential operator on $K$ of order at most $M$. It can be uniquely written as a $K$-linear combination of operators $D^e$, where $\sum |e_i| \le M$.
    Now, by the Lemma \ref{obvious.values}, we can compute that $D^e=\prod_i\frac{1}{e_i!}\frac{\partial^{e_i}}{\partial x_i^{e_i}}$. This finishes the proof of the unique representation.

    The last part follows from the explicit description of $K\otimes_{K^{p^n}} K$ in terms of monomials $dx^e$. Indeed, it is $K$-spanned by monomials with $e_i<p^n$ for all $i$.
\end{proof}

\begin{Rem}
   We can reduce a number of symbols needed to write any operator as a polynomial in them. Indeed, if we have a $p$-adic expansion of a natural number $a=a_0+a_1 p+a_2 p^2 + \ldots$, then an operator $\prod_{m\ge 0}\frac{1}{p^m!}\frac{\partial^{p^m}}{\partial x_i ^{p^m}}^{\circ a_m}$ differs from $\frac{1}{a!}\frac{\partial^{a}}{\partial x_i ^{a}}$ by a integral scalar that is invertible modulo $p$. This means that the symbols $\frac{1}{p^m!}\frac{\partial^{p^m}}{\partial x_i ^{p^m}}$ generate the algebra of differential operators. And, the ones with $m<n$ generate $\op{Diff}_{p^n}(K)$.
\end{Rem}

\paragraph{Differential between Algebras of Differential Operators}

It is well known that for any map between two spaces $f:X\to Y$, we get a differential between their tangent spaces $df: TX\to TY$. It turns out that $df$ naturally extends to a map $df: \op{Diff}(X)\to \op{Diff}(Y)$ between their spaces of differential operators. We present how it works for fields. An original construction of $df$ is covered around the Proposition (16.4.18) in \cite{16.8}.

Let $K$ be a field of characteristic $p>0$ such that $K/k$ is a finitely generated field extension, where $k=K^{p^\infty}$.

Let $k\subset W \subset K$ be a subfield. This induces a map:
\[
W\otimes_k W\to K\otimes_k K,
\]
that, for every $n\ge 0$, induces a map
\[
\mathcal{P}^{\le n}_{W/k}=W\otimes_k W/J_{W/k}^{n+1}\xrightarrow{\Delta^{\le n}_{K/W}} K\otimes_k K/J_{K/k}^{n+1}=\mathcal{P}^{\le n}_{K/k}.
\] 
We can use these to define differentials. Indeed, we have a series of $K$-linear maps:
\[
        \op{Hom}_K (\mathcal{P}^{\le n}_{K/k}, K) \rightarrow
        \op{Hom}_K (\mathcal{P}^{\le n}_{W/k}\otimes K, K) \leftarrow
        \op{Hom}_W (\mathcal{P}^{\le n}_{W/k}, K) \leftarrow
        \op{Hom}_W (\mathcal{P}^{\le n}_{W/k}, W) \otimes K,
\]
that are given respectively by $\phi \mapsto \phi \circ (\mathcal{P}^{\le n}_{W/k}\otimes K\to \mathcal{P}^{\le n}_{K/k}); \
\phi \mapsto (a \otimes x \mapsto x\phi(a)); \
\phi \otimes y \mapsto ((W \to K) \circ \phi) \cdot y$. We can observe that the arrows pointing left are injections between spaces of the same dimensions, therefore they are isomorphisms.
Now, we compose the arrow pointing right with the inverses of the other arrows to obtain a differential $d_{K/W}^{\le n}$. These are compatible with each other, thus they define a single differential on the whole algebra of differential operators. 

Here is a key theorem about these differentials.

\begin{Prop}\label{differential}
Let $K$ be a field of characteristic $p>0$ such that $K/k$ is a finitely generated extension of fields, where $k=K^{p^\infty}$. Let $k\subset W \subset K$ be a subfield.
The inclusion $W\subset K$ induces a differential:
\[
d(K/W): \op{Diff}(K) \to \op{Diff}(W)\otimes K.
\]
Explicitly, for a given differential operator $D$, its image $d(K/W)(D)$ is an operator from $W$ to $K$ given by restricting $D$ to $W$. Moreover, this differential is surjective.
\end{Prop}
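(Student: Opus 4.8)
The plan is to verify the two assertions separately: first that the map $d(K/W)$ is well-defined and given by restriction of operators to $W$, and then that it is surjective. For the first part, the construction in the paragraph preceding the statement already produces a $K$-linear map $d_{K/W}^{\le n}$ at each finite order, compatible with the order filtration, so it assembles to a map $d(K/W)\colon\op{Diff}(K)\to\op{Diff}(W)\otimes K$. What I would add is the explicit identification: unwinding the composite of the three isomorphisms and one restriction map in the displayed chain, a differential operator $D\colon K\to K$ of order $\le n$ is sent to the composite $W\hookrightarrow K\xrightarrow{D}K$, viewed as an element of $\op{Hom}_W(\mathcal{P}^{\le n}_{W/k},W)\otimes K$ via the embedding $\op{Diff}_k(W)\otimes K\hookrightarrow \op{Hom}_W(\mathcal{P}^{\le n}_{W/k},K)$. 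The one thing to check is that $D|_W$ really is a differential operator on $W$ (relative to $k$) with coefficients in $K$, i.e.\ that $[a_0,[a_1,[\ldots,[a_n,D]\ldots]$ vanishes on $W$ for $a_0,\ldots,a_n\in W$; but this is immediate from the corresponding identity for $D$ on all of $K$ (Observation \ref{embedding}), restricted to $W$. This makes the chain of maps concrete and proves the "explicitly" clause.

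For surjectivity, the cleanest route is via $p$-bases and the explicit symbol calculus of Corollary \ref{unique expression}. Choose a $p$-basis $y_1,\ldots,y_r$ of $W/k$ and extend it (using the last clause of Definition \ref{Coordinates}, since $p$-independent elements of $W/k$ remain $p$-independent in $K/k$ as $W\subset K$) to a $p$-basis $y_1,\ldots,y_r,x_1,\ldots,x_s$ of $K/k$. Then $\op{Diff}_k(W)$ is spanned over $W$ by the symbols $\prod_j \frac{1}{b_j!}\frac{\partial^{b_j}}{\partial y_j^{b_j}}$, and $\op{Diff}_k(K)$ by $\prod_j \frac{1}{b_j!}\frac{\partial^{b_j}}{\partial y_j^{b_j}}\cdot\prod_i\frac{1}{a_i!}\frac{\partial^{a_i}}{\partial x_i^{a_i}}$. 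The differential operator $\prod_j \frac{1}{b_j!}\frac{\partial^{b_j}}{\partial y_j^{b_j}}$ on $K$ restricts, on $W$, to the operator with the same name there: this follows from Lemma \ref{obvious.values}, since both are characterized by their values on monomials $\prod_j y_j^{c_j}$, and those monomials lie in $W$. Hence every basis element of $\op{Diff}_k(W)$ (with coefficient $1\in W\subset K$) is hit, and by $K$-linearity of $d(K/W)$ the whole of $\op{Diff}_k(W)\otimes K$ is in the image. Concretely, $d(K/W)\bigl(\prod_j \frac{1}{b_j!}\frac{\partial^{b_j}}{\partial y_j^{b_j}}\bigr)=\prod_j \frac{1}{b_j!}\frac{\partial^{b_j}}{\partial y_j^{b_j}}\otimes 1$.

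The main obstacle, and the only place requiring care, is matching the abstractly defined $d_{K/W}^{\le n}$ from the displayed chain with the down-to-earth "restrict to $W$" description — essentially bookkeeping across the three isomorphisms, keeping straight which $K$-algebra structure (via $p_1$) is in play and that the leftward arrows are isomorphisms because source and target are finite-dimensional $K$-vector spaces of equal dimension (the dimension count comes from the monomial bases of principal parts, Corollary \ref{unique expression}). Once the identification $D\mapsto D|_W$ is established, surjectivity is a short computation in coordinates as above; alternatively one can phrase surjectivity representation-free by noting that the surjection $\mathcal{P}^{\le n}_{W/k}\otimes_W K\twoheadrightarrow\mathcal{P}^{\le n}_{K/k}$ fails in general, so one instead uses that $\mathcal{P}^{\le n}_{W/k}\otimes_W K$ injects into $\mathcal{P}^{\le n}_{K/k}$ compatibly with the monomial bases, dualizes, and reads off surjectivity on $\op{Hom}_K(-,K)$ — but the $p$-basis argument is more transparent and I would use that.
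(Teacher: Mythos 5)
Your first paragraph (well-definedness and the identification of $d(K/W)$ with restriction to $W$) is fine and matches the paper. The surjectivity argument, however, has a genuine gap at its very first step: you claim that $p$-independent elements of $W/k$ remain $p$-independent in $K/k$ and hence that a $p$-basis of $W/k$ extends to a $p$-basis of $K/k$. This is false unless $K/W$ is separable. Take $k$ perfect, $K=k(x)$, $W=K^p=k(x^p)$: then $x^p$ is a $p$-basis of $W/k$, but $d(x^p)=px^{p-1}dx=0$ in $\Omega_{K/k}$, so $x^p$ is not $p$-independent in $K/k$ and cannot be completed to a $p$-basis of $K/k$. The obstruction is that $\Omega_{W/k}\otimes_W K\to\Omega_{K/k}$ need not be injective. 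In this example the operator $\frac{\partial}{\partial (x^p)}\in\op{Diff}(W)$ is still in the image of $d(K/W)$, but its preimage is the \emph{order-$p$} operator $\frac{1}{p!}\frac{\partial^{p}}{\partial x^{p}}$ (Proposition \ref{symbol.differential}), not an operator ``with the same name'': your symbol $\frac{\partial}{\partial y_j}$ on $K$ is not even defined when $y_j$ fails to be part of a $p$-basis of $K/k$. So your argument proves surjectivity only when $K/W$ is separable.

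The paper's proof is built precisely to get around this. It reduces to $W$ separably closed, then splits the general case in three steps: (1) if $W\supset K^{p^n}$ (finite exponent), surjectivity is checked on the canonical filtration using Lemma \ref{end=diff}, where it becomes the elementary statement that a $K^{p^{n+\bullet}}$-linear map defined on the subspace $W\subset K$ extends to all of $K$; (2) if $K/W$ is separable, the $p$-basis of $W/k$ does extend to one of $K/k$ and the argument is essentially yours; (3) the general $W$ is handled by factoring through $W'=W^{1/p^\infty}\cap K$, for which $W'/W$ has finite exponent and $K/W'$ is separable, and composing the two surjections. To repair your proof you would need to add steps (1) and (3), or otherwise account for preimages of higher order than the operators they map to.
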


\begin{proof}
The construction of the differential is done above. The restriction part follows from the explicit maps for the $K$-linear maps in the construction. So, the only thing to prove is the surjectivity.

We prove the surjectivity in three steps. Doing so, without loss of generality, we can assume that $W^s=W$, because we have $\op{Diff}(W)\otimes K=\op{Diff}(W^s)\otimes K$ since any $p$-basis of $W/k$ is a $p$-basis of $W^s/k$ and any $p$-basis determines all differential operators by the Corollary \ref{unique expression}.

 First, we assume that there is an integer $n$ such that $W\supset K^{p^n}$. Then we can filter algebras in the following way:
\[
\op{Diff}_{K^{p^n}}(K)\subset \op{Diff}_{K^{p^{n+1}}}(K)\subset \op{Diff}_{K^{p^{n+2}}}(K) \subset \ldots \subset \op{Diff}(K)
\]
\[
\op{Diff}_{K^{p^n}}(W)\subset \op{Diff}_{K^{p^{n+1}}}(W)\subset \op{Diff}_{K^{p^{n+2}}}(W) \subset \ldots \subset \op{Diff}(W)
\]
From the fact that the differential is a restriction, we get that the differential is compatible with the above filtrations. Indeed, any $K^{p^{n+\bullet}}$-linear map on $K$ must be mapped to $K^{p^{n+\bullet}}$-linear map on $W$. Hence, it is enough to show that all maps 
$\op{Diff}_{K^{p^{n+\bullet}}}(K)\to \op{Diff}_{K^{p^{n+\bullet}}}(W)\otimes K$ are surjective. However, by the Lemma \ref{end=diff}, this is equivalent to showing that a restriction map
\[
\op{End}_{K^{p^{n+\bullet}}}(K)=\op{Hom}_{K^{p^{n+\bullet}}}(K,K)\to \op{End}_{K^{p^{n+\bullet}}}(W)\otimes K=\op{Hom}_{K^{p^{n+\bullet}}}(W,K)
\]
is surjective. This is trivial. Indeed, let $\phi\in \op{Hom}_{K^{p^{n+\bullet}}}(W,K)$. We have that $W\subset K$ is a $K^{p^{n+\bullet}}$-linear subspace, so there exists a complementary vector space $V$ such that $W\oplus V= K$. Let $\psi\in \op{Hom}_{K^{p^{n+\bullet}}}(K,K)$ be an operator that is equal $\phi$ on $W$ and $0$ on $V$, then a restriction to $W$ of $\psi$ is equal $\phi$. Therefore, the map is surjective.

Second, we assume that $W\subset K$ satisfies $W^{1/p^{\infty}}\cap K=W$, where $W^{1/p^{\infty}}$, and the intersection are taken in a perfection of $K$. This means that $W^{1/p^{\infty}}$ and $K$ are linearly disjoint over $W$. Therefore, by the Theorem A1.3 from \cite{Eisenbud}, we have that $K/W$ is separable, the Definition \ref{separable}. This means that there are $p$-bases $x_i$ and  $y_j$ for $W/k$ and $K/W$ respectively. Moreover, together, they are a $p$-basis for $K/k$, the Definition \ref{Coordinates}. 
Indeed, $x_i, y_j$ are algebraically independent over $k$ and $k(x_i,y_j)^s=\left(k(x_i)^s(y_j)\right)^s=W(y_j)^s=K$. From the Corollary \ref{unique expression}, we know that differential operators are determined by values on monomials from a $p$-basis. Consequently, let $\phi\in \op{Diff}(W)$. There is a unique operator $\psi\in\op{Diff}(K)$ such that $\psi=\phi$ on monomials in $x_i$ and zero on any other monomial in $x_i, y_j$. This means that $d(K/W)(\psi)=\phi \otimes 1$. Hence, the map $d(K/W)$ is surjective, because any element of $\op{Diff}(W)\otimes K$ is a $K$-linear combination of elements of the form $\phi\otimes 1$.

The last step is a combination of the previous two. Let $k\subset W\subset K$ be a subfield. We define $W'\coloneqq W^{1/p^\infty}\cap K$ \footnote{$W'$ could be called a ``perfect saturation'' of $W$ in $K$.}. An extension $W'/W$ satisfies the first step by the Lemma \ref{finite exponents}. 
And, an extension $K/W'$ satisfies the second step, therefore the differential $d(K/W)$ is a composition of two surjections:
\[
\op{Diff}(K)\to \op{Diff}(W')\otimes K \to \left(\op{Diff}(W) \otimes W'\right) \otimes K.
\]
This finishes the proof.
\end{proof}

Here are some simple corollaries.

\begin{Cor}\label{kernel}
   Let $K,W$ be like in \ref{differential}. 
The kernel of $d(K/W)$ is equal $\op{Diff}(K) \cdot \E(K/W)$, i.e. it is a left ideal generated by $\E(K/W)$.
\end{Cor}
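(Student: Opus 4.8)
The plan is to prove the two inclusions separately, with essentially all of the work in the reverse one. For $\op{Diff}(K)\cdot\E(K/W)\subseteq\ker d(K/W)$, I would invoke the explicit description of $d(K/W)$ from Proposition~\ref{differential}: its kernel is exactly the set of differential operators on $K$ that vanish on $W$. Now any $D\in\E(K/W)$ is $W$-linear and kills $1$ by Observation~\ref{embedding}, hence $D(w)=w\,D(1)=0$ for every $w\in W$; composing on the left with an arbitrary (additive) operator preserves the property of vanishing on $W$, and $\ker d(K/W)$ is an additive subgroup, so the whole left ideal $\op{Diff}(K)\cdot\E(K/W)$ lies inside it.

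For the reverse inclusion $\ker d(K/W)\subseteq\op{Diff}(K)\cdot\E(K/W)$, the idea is to use the canonical filtration to reduce to a finite-exponent situation, where the statement becomes a one-line linear-algebra fact. Fix $D\in\op{Diff}(K)$ with $D|_W=0$. Since $\op{Diff}(K)=\bigcup_{n}\op{Diff}_{p^n}(K)$, there is an $n$ with $D\in\op{Diff}_{p^n}(K)=\op{Diff}_{K^{p^n}}(K)=\op{End}_{K^{p^n}}(K)$, the last equality by Lemma~\ref{end=diff} (note $K/K^{p^n}$ is finite, as $K/k$ is finitely generated). Put $W':=W\cdot K^{p^n}$, so that $K^{p^n}\subseteq W'\subseteq K$ and $K/W'$ is finite. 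Since $W'$ is the $K^{p^n}$-linear span of the field $W$ inside $K$ and $D$ is $K^{p^n}$-linear with $D(W)=0$, we get $D|_{W'}=0$ as well.

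Now I would choose a $W'$-vector space decomposition $K=W'\oplus V$ and let $q\colon K\to K$ be the projection onto $V$ along $W'$. Then $q$ is $W'$-linear with $q(1)=0$, so $q\in\E(K/W')$ (using $\op{Diff}_{W'}(K)=\op{End}_{W'}(K)$ once more), and because $D$ kills $W'$ one checks immediately that $D=D\circ q$. As $D\in\op{Diff}_{K^{p^n}}(K)\subseteq\op{Diff}(K)$, this exhibits $D\in\op{Diff}(K)\cdot\E(K/W')$. Finally $W\subseteq W'$ gives an inclusion $\op{Diff}_{W'}(K)\subseteq\op{Diff}_W(K)$ compatible with the $K\oplus\E$ decompositions, hence $\E(K/W')\subseteq\E(K/W)$, and therefore $D\in\op{Diff}(K)\cdot\E(K/W')\subseteq\op{Diff}(K)\cdot\E(K/W)$, as desired.

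The one delicate point — and the reason the reduction is needed at all — is that the projection $q$ is a differential operator relative to $W'$ but in general not relative to $W$, so the splitting argument must take place inside a genuine endomorphism algebra; this is exactly why one passes from $W$ to $W'=W\cdot K^{p^n}$, where $\op{End}_{W'}(K)=\op{Diff}_{W'}(K)$ by Lemma~\ref{end=diff}. One could alternatively mimic the three-step structure of the proof of Proposition~\ref{differential} (finite exponent, then separable, then combine), but the canonical filtration collapses this into a single reduction.
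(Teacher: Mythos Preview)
Your argument is correct. The forward inclusion matches the paper's proof exactly. For the reverse inclusion, however, the paper takes a different route: it simply asserts that the equality follows from ``dimension counting order by order by writing down explicit formulas for them in terms of diagonal ideals.'' In other words, the paper compares the $K$-dimensions of $\ker d(K/W)^{\le n}$ and $\bigl(\op{Diff}(K)\cdot\E(K/W)\bigr)^{\le n}$ using the principal-parts description of differential operators, and concludes equality once both sides are known to sit inside each other.

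Your approach is genuinely different and arguably cleaner: instead of the order filtration and a dimension count, you use the canonical filtration to pass from $W$ to $W'=W\cdot K^{p^n}$, where Lemma~\ref{end=diff} lets you work inside $\op{End}_{W'}(K)$ and simply write down the projection $q$ so that $D=D\circ q$ exhibits $D$ as an element of the ideal. This avoids any bookkeeping with powers of $J_{K/k}$ and makes the membership in the left ideal completely explicit; the price is that one has to check carefully (as you do) that $q$ lands in $\E(K/W')\subseteq\E(K/W)$ rather than merely in $\E(K/W')$. The paper's version, by contrast, stays closer to the EGA-style formalism of principal parts already set up in Section~\ref{section3}, but is only a sketch and leaves the actual dimension computation to the reader.
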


\begin{proof}
Being in the kernel means to be zero when restricted to $W$. This property is preserved under composition with anything from the left. Moreover, $\E(K/W)$ is in the kernel. Finally, the last fact that $\op{Diff}(K) \cdot \E(K/W)$ is the whole kernel is just dimension counting order by order by writing down explicit formulas for them in terms of diagonal ideals.
\end{proof}

\begin{Cor}\label{short exact sequence for diff}
    Let $K,W$ be like in \ref{differential}. Then we have a short exact sequence of $K$-vector spaces:
    $$0\to \op{Diff}(K) \cdot \E(K/W) \to \op{Diff}(K) \to \op{Diff} (W) \otimes K\to 0.$$
\end{Cor}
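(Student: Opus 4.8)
The plan is to observe that this is nothing more than the assembly of the two preceding corollaries together with Proposition \ref{differential}. The middle arrow in the claimed sequence is exactly the differential $d(K/W)\colon \op{Diff}(K)\to \op{Diff}(W)\otimes K$ constructed in Proposition \ref{differential}; it is $K$-linear by construction (the whole construction of $d_{K/W}^{\le n}$ proceeds through $K$-linear maps between $K$-vector spaces), so all three objects and both maps live in the category of $K$-vector spaces and there is nothing to check about $K$-linearity.

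First I would record exactness on the right: this is precisely the surjectivity statement of Proposition \ref{differential}. Next I would record exactness in the middle: by Corollary \ref{kernel} the kernel of $d(K/W)$ equals $\op{Diff}(K)\cdot \E(K/W)$, which is exactly the image of the left-hand arrow. Finally, exactness on the left is automatic, since the left-hand arrow is the inclusion of the subspace $\op{Diff}(K)\cdot\E(K/W)$ into $\op{Diff}(K)$, hence injective.

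There is no genuine obstacle here; the content has already been discharged in Proposition \ref{differential} and Corollary \ref{kernel}. The only thing worth a sentence is that one should make explicit that the first map of the sequence is the inclusion and the second is $d(K/W)$, so that ``$\ker = \op{im}$'' at the middle term is literally the assertion of Corollary \ref{kernel}. Concretely, the proof reads:

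\begin{proof}
The middle map is the differential $d(K/W)$ of Proposition \ref{differential}, and the first map is the inclusion of the subspace $\op{Diff}(K)\cdot \E(K/W)\subset \op{Diff}(K)$; both are $K$-linear. Injectivity of the first map is clear. Exactness at the middle term is the statement of Corollary \ref{kernel}, namely $\ker\big(d(K/W)\big)=\op{Diff}(K)\cdot \E(K/W)$. Exactness at the right term is the surjectivity of $d(K/W)$ asserted in Proposition \ref{differential}.
\end{proof}
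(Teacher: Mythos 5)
Your proof is correct and follows exactly the same route as the paper: the paper's own argument is the one-line observation that the sequence combines the surjectivity from Proposition \ref{differential} with the kernel description from Corollary \ref{kernel}. Your version merely spells out the identification of the two maps and the trivial injectivity on the left, which is harmless.
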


\begin{proof}
    This is the surjectivity from \ref{differential}, and the kernel description from \ref{kernel}.
\end{proof}

We finish by providing explicit formulas for how differentials act on the symbols \ref{symbol}.

\begin{Prop}\label{symbol.differential}
Let $K$ be like in \ref{differential}. Let $x_1,\ldots x_n$ be a $p$-basis for $K/k$, then $x_1^{p^m},\ldots x_n^{p^m}$ is a $p$-basis for $K^{p^m}/k$. Moreover, we have that
\[
    d(K/K^p)\left(\frac{1}{p^{m}!}\frac{\partial^{p^m}}{\partial x_i ^{p^m}}\right)= \frac{1}{p^{m-1}!}\frac{\partial^{p^{m-1}}}{\partial \left(x_i^p\right)^{p^{m-1}}}.
\]
Consequently,
\[
    d(K/K^{p^m})\left(\frac{1}{p^{m}!}\frac{\partial^{p^m}}{\partial x_i ^{p^m}}\right)= \frac{\partial}{\partial \left(x_i^{p^m}\right)}.
\]
\end{Prop}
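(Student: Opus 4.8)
The plan is to compute both sides by evaluating them on monomials in the relevant $p$-basis, using the Corollary \ref{unique expression} which says a differential operator is uniquely determined by its values on monomials $\prod x_i^{a_i}$. First I would establish the preliminary claim that $x_1^{p^m},\ldots,x_n^{p^m}$ is a $p$-basis for $K^{p^m}/k$: since $x_i$ are $p$-independent over $k$ and $K^{p^m}=k(x_1^{p^m},\ldots,x_n^{p^m})^{?}$ — more precisely $K^{p^m}$ is generated over $k$ by the $p^m$-th powers of $K$, and applying the Frobenius $p^m$-power map, which is a field isomorphism $K\xrightarrow{\sim}K^{p^m}$ carrying $k=K^{p^\infty}$ to itself (as $k$ is perfect), transports the $p$-basis $(x_i)$ of $K/k$ to the $p$-basis $(x_i^{p^m})$ of $K^{p^m}/k$. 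This also identifies the symbol $\frac{1}{a!}\frac{\partial^a}{\partial (x_i^{p^m})^a}$ on $K^{p^m}$ via Lemma \ref{obvious.values}.

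The main computation is the case $d(K/K^p)$. By Proposition \ref{differential}, $d(K/K^p)\left(\frac{1}{p^m!}\frac{\partial^{p^m}}{\partial x_i^{p^m}}\right)$ is the operator on $K^p$ obtained by restricting $\frac{1}{p^m!}\frac{\partial^{p^m}}{\partial x_i^{p^m}}$ to $K^p\subset K$. So I would evaluate this restriction on the monomial basis $\prod_j (x_j^p)^{b_j}$ of $K^p/k$ (with $b_j\ge 0$). Writing $(x_j^p)^{b_j}=x_j^{pb_j}$, Lemma \ref{obvious.values} gives $\frac{1}{p^m!}\frac{\partial^{p^m}}{\partial x_i^{p^m}}\left(\prod_j x_j^{pb_j}\right)=\binom{pb_i}{p^m}\prod_{j\ne i}x_j^{pb_j}\cdot x_i^{pb_i-p^m}$, and this is zero unless $p^{m-1}\mid b_i$ — because by Lucas' theorem (or Kummer's theorem on carries) $\binom{pb_i}{p^m}\not\equiv 0\pmod p$ forces, in base $p$, the digits of $p^m$ to be dominated by those of $pb_i$; since $p^m$ has a single digit $1$ in position $m$, we need the position-$m$ digit of $pb_i$ to be nonzero, i.e. the position-$(m-1)$ digit of $b_i$ nonzero, but we need more: for the ratio to have no carries we actually need $b_i = p^{m-1}c_i$ for the binomial to reduce cleanly. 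The cleanest route: compare with the claimed right-hand side $\frac{1}{p^{m-1}!}\frac{\partial^{p^{m-1}}}{\partial(x_i^p)^{p^{m-1}}}$ evaluated on the same monomial, which by Lemma \ref{obvious.values} is $\binom{b_i}{p^{m-1}}\prod_{j\ne i}x_j^{pb_j}\cdot x_i^{p(b_i-p^{m-1})}$. So the identity to check on each monomial is the scalar congruence $\binom{pb_i}{p^m}\equiv\binom{b_i}{p^{m-1}}\pmod p$ together with the matching of exponents $pb_i-p^m = p(b_i-p^{m-1})$, the latter being trivial. The congruence $\binom{pa}{pb}\equiv\binom{a}{b}\pmod p$ is a standard consequence of Lucas' theorem (the base-$p$ digits of $pa$ and $pb$ are those of $a,b$ shifted up by one), applied with $a=b_i$, $b=p^{m-1}$, noting $\binom{pb_i}{p^m}=\binom{pb_i}{p\cdot p^{m-1}}$.

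Having the $m=1$-style statement $d(K/K^p)\left(\frac{1}{p^m!}\frac{\partial^{p^m}}{\partial x_i^{p^m}}\right)=\frac{1}{p^{m-1}!}\frac{\partial^{p^{m-1}}}{\partial(x_i^p)^{p^{m-1}}}$, the consequence for $d(K/K^{p^m})$ follows by iterating: the differential for a tower $k\subset K^{p^m}\subset\cdots\subset K^p\subset K$ is the composite of the successive $d(K^{p^j}/K^{p^{j+1}})$ (functoriality of $d(-/-)$, which is immediate from the "restriction" description in Proposition \ref{differential}). Applying the first formula $m$ times — at step $j$ it sends $\frac{1}{p^{m-j}!}\frac{\partial^{p^{m-j}}}{\partial(x_i^{p^j})^{p^{m-j}}}$ to $\frac{1}{p^{m-j-1}!}\frac{\partial^{p^{m-j-1}}}{\partial(x_i^{p^{j+1}})^{p^{m-j-1}}}$ (this is just the base case with $K$ replaced by $K^{p^j}$ and $x_i$ by $x_i^{p^j}$, legitimate since $x_i^{p^j}$ is a $p$-basis of $K^{p^j}/k$ by the preliminary claim) — one lands after $m$ steps on $\frac{1}{p^0!}\frac{\partial^{p^0}}{\partial(x_i^{p^m})^{p^0}}=\frac{\partial}{\partial(x_i^{p^m})}$, as claimed.

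I expect the only real obstacle to be the combinatorial scalar identity $\binom{pb}{p^m}\equiv\binom{b}{p^{m-1}}\pmod p$ and, slightly more delicately, making sure the vanishing patterns on both sides genuinely agree (not merely the nonzero values), which they do since both vanish exactly when $p^{m-1}\nmid b$ — again read off from Lucas' theorem. Everything else is bookkeeping with Lemma \ref{obvious.values} and the functoriality of the differential from Proposition \ref{differential}.
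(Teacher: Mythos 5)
Your proposal is correct and follows essentially the same route as the paper: the paper's proof just notes that the $p$-basis claim is clear from the separating-transcendence-basis viewpoint and that the formulas follow by comparing values of the symbols on monomials via Corollary \ref{unique expression}, which is exactly what you carry out in detail, reducing to the Lucas congruence $\binom{pb}{p^m}\equiv\binom{b}{p^{m-1}}\pmod p$ and then iterating for the second formula. One stray side remark --- that the operator vanishes on a monomial unless $p^{m-1}\mid b_i$ --- is not the right characterization (vanishing is governed by whether the base-$p$ digit of $b_i$ in position $m-1$ is zero), but this is immaterial since your final congruence identity handles the nonzero and zero values uniformly.
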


\begin{proof}
The fact about $p$-bases is obvious from the perspective that they are also separating transcendental bases, the Definition \ref{Coordinates}.

The rest follows from comparing values of the symbols on monomials in these $p$-bases by the Corollary \ref{unique expression}.
\end{proof}

Consequently, we could use notations $\frac{1}{p^{m}!}\frac{\partial^{p^m}}{\partial x_i ^{p^m}}$ and $\frac{\partial}{\partial \left(x_i^{p^m}\right)}$ interchangeably on any subfield $K\supset W\supset k$ by identifying them by differentials.

\newpage
\section{Correspondence}\label{section4}

This section proves our correspondence \ref{MainTheorem}. We start with recalling Jacobson--Bourbaki \ref{JB} and Jacobson \ref{Jacobson} Correspondences that partially overlap with our theorem. Then, we iterate an operation of relative tangent bundle to obtain an iterated Jacobson Correspondence \ref{part2}. We conclude a correspondence between power towers and subalgebras by restricting a canonical filtration \ref{canonical} to all subalgebras of differential operators and then using a corollary \ref{effective} of a discrete Jacobson--Bourbaki correspondence \ref{discrete}. 
After that, we introduce an operation of unpacking \ref{part3} that is defined in terms of differentials between algebras of differential operators \ref{differential}. Its role is to compare subalgebras with $p$-Lie algebras. Finally, we amalgamate all the above with the Proposition \ref{NonincreasingOrder} into the Theorem \ref{MainTheorem}.

\paragraph{Jacobson Theorems}

We recall two Jacobson--Bourbaki Correspondences, and a corollary called a Jacobson Correspondence.

The first one is a general solution to a Galois problem that is describing all subfields of a given field in terms of a structure. We call it topological, because it involves a topology. More details can be found in the Chapter 8 of \cite{Jacobson3}.

\begin{Th}[Topological Jacobson--Bourbaki Correspondence]\label{JB}
Let $L$ be a field. There is a natural order reversing correspondence between subfields $K$ of $L$, and closed in a finite topology $L$-subalgebras of additive endomorphisms of $L$. 

Explicitly,
this correspondence is given by the following formulas:
\begin{align*}
   K\subset L \mapsto& \mathcal{JB}(K)\coloneqq \op{End}_K(L)\subset \op{End}_\Z(L)\\
   L\subset \mathcal{JB}=\overline{\mathcal{JB}}\subset \op{End}_\Z(L) \mapsto& \op{const}(\mathcal{JB})\coloneqq \{x\in L; \forall_{D\in \mathcal{JB}} [x,D]=0\}\subset L.
\end{align*}
\end{Th}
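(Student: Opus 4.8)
The plan is to show that the two assignments $K\mapsto\op{End}_K(L)$ and $\mathcal{JB}\mapsto\op{const}(\mathcal{JB})$ are mutually inverse order-reversing maps; order-reversal is immediate from the formulas, so the content is well-definedness plus the two round-trip identities. For well-definedness: $\op{const}(\mathcal{JB})$ is a subfield of $L$ (viewed inside $\op{End}_\Z(L)$ as scalar operators) by the short computation that $[x,D]=[y,D]=0$ forces $[x\pm y,D]=[xy,D]=0$ and $[x^{-1},D]=-x^{-1}[x,D]x^{-1}=0$; and $\op{End}_K(L)$ is an $L$-subalgebra that is closed in the finite topology, because $K$-linearity can be tested one relation at a time — if $f$ agrees with some $K$-linear operator on $\{x,ax\}$ for $a\in K$ then $f(ax)=af(x)$, and similarly for additivity on $\{x,y,x+y\}$, so a finite-topology limit of $K$-linear operators is again $K$-linear. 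The easy round trip $\op{const}(\op{End}_K(L))=K$ is then quick: multiplication by an element of $K$ commutes with every $K$-linear endomorphism, giving $\supseteq$; and if $x\in L\setminus K$ then $1,x$ are $K$-linearly independent, so there is a $K$-linear $D$ with $D(1)=0$ and $D(x)=1$, whence $[x,D](1)=xD(1)-D(x)=-1\neq0$ and $x\notin\op{const}(\op{End}_K(L))$.

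The substantial round trip is $\op{End}_{\op{const}(\mathcal{JB})}(L)=\mathcal{JB}$ for a closed $L$-subalgebra $\mathcal{JB}$. Writing $K:=\op{const}(\mathcal{JB})$, the inclusion $\mathcal{JB}\subseteq\op{End}_K(L)$ is immediate from the definition of $K$. For the reverse inclusion I would regard $L$ as a left module over the ring $\mathcal{JB}$. Since $\mathcal{JB}$ contains all left multiplications by elements of $L$, every $\mathcal{JB}$-submodule of $L$ is in particular an $L$-subspace of $L$, hence is $0$ or $L$; thus $L$ is a simple $\mathcal{JB}$-module. Its commutant $\op{End}_{\mathcal{JB}}(L)$ consists of the additive maps commuting with all of $\mathcal{JB}$; commuting already with left multiplication by $L$ forces such a map to be multiplication by some $c\in L$ (as $\op{End}_L(L)=L$), and commuting with the remaining elements of $\mathcal{JB}$ says precisely $c\in K$, so $\op{End}_{\mathcal{JB}}(L)=K$. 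Jacobson's density theorem then gives that $\mathcal{JB}$ is dense in $\op{End}_K(L)$ in the finite topology; since $\mathcal{JB}$ is assumed closed, $\mathcal{JB}=\op{End}_K(L)$. Combining the two round trips with order-reversal yields the claimed correspondence.

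The main obstacle is this last round trip: one must recognise $L$ as a simple module over $\mathcal{JB}$, identify its commutant as exactly $\op{const}(\mathcal{JB})$, and then apply the density theorem in the finite topology — the very topology in which $\mathcal{JB}$ is assumed closed, so that ``dense'' upgrades to ``equal''. The remaining steps are formal, though some care is needed to keep the scalar-operator identification $L\hookrightarrow\op{End}_\Z(L)$ and the left/right conventions for the bracket $[a,b]=a\circ b-b\circ a$ consistent throughout.
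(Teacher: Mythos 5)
Your proof is correct: the paper does not prove this theorem at all (it is recalled from Chapter 8 of \cite{Jacobson3}), and your argument --- well-definedness, the easy round trip via a $K$-linear operator separating $x\notin K$ from $K$, and the hard round trip by recognising $L$ as a simple $\mathcal{JB}$-module with commutant $\op{const}(\mathcal{JB})$ and invoking the Jacobson density theorem so that closedness upgrades density to equality --- is precisely the standard proof given in that reference. No gaps.
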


The second one is called discrete by us, because by restricting ourselves to subfields of finite corank the above topological assumption becomes empty. It is proved directly in \cite{Jacobson2} on page 22.

\begin{Th}[Discrete Jacobson--Bourbaki Correspondence]\label{discrete}
Let $L$ be a field. There is a natural order reversing correspondence between subfields $K$ of $L$ such that $L/K$ is finite, and finite dimensional $L$-subalgebras of $\op{End}_\Z(L)$.

Explicitly,
this correspondence is given by the following formulas:
\begin{align*}
   K\subset L\mapsto& \mathcal{JB}(K)\subset \op{End}_\Z(L)\\
   L\subset \mathcal{JB}\subset \op{End}_\Z(L) \mapsto& \op{const}(\mathcal{JB})\subset L.
\end{align*}
Moreover, we have that $\op{dim}_K(L)=\op{dim}_L(\mathcal{JB}(K))$.
\end{Th}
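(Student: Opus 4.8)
The plan is to verify that the two assignments $K \mapsto \op{End}_K(L)$ and $\mathcal{A} \mapsto \op{const}(\mathcal{A})$ land in the claimed classes and are mutually inverse, extracting the dimension identity along the way. First I would dispatch the easy direction: if $\op{dim}_K(L) = n < \infty$ then $\op{End}_K(L) \cong M_n(K)$ as a $K$-algebra, so it has $K$-dimension $n^2$ and hence $L$-dimension $n$ for the left-multiplication action of the copy of $L$ inside it; thus $\op{End}_K(L)$ is a finite-dimensional $L$-subalgebra with $\op{dim}_L(\op{End}_K(L)) = \op{dim}_K(L)$, which is the ``Moreover'' clause. Then I would check $\op{const}(\op{End}_K(L)) = K$: an additive operator commuting with every $K$-linear endomorphism commutes in particular with all multiplications $m_a$ ($a \in L$), hence is itself a multiplication $m_c$; and commuting with a $K$-linear map that kills $1$ but not a prescribed $c \in L \setminus K$ is impossible, so $c$ must lie in $K$ (equivalently, $c$ lies in the center of $M_n(K)$).

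For the reverse composition, fix a finite-dimensional $L$-subalgebra $\mathcal{A} \subseteq \op{End}_\Z(L)$; since $\mathcal{A}$ is an $L$-submodule containing $\op{id}$, it contains every $m_a$. Put $K \coloneqq \op{const}(\mathcal{A})$. I would first note that $K$ is a subfield of $L$ (closure under the field operations is immediate, and $m_{a^{-1}}$ commutes with $\mathcal{A}$ whenever $m_a$ does) and that every $D \in \mathcal{A}$ is $K$-linear by definition of $K$, so $\mathcal{A} \subseteq \op{End}_K(L)$. The content is to upgrade this to an equality together with $\op{dim}_K(L) < \infty$. Here I would view $L$ as a left $\mathcal{A}$-module by evaluation; because $\mathcal{A}$ contains all $m_a$, every $\mathcal{A}$-submodule of $L$ is an $L$-subspace, so $L$ is a simple faithful $\mathcal{A}$-module, and Schur's lemma identifies its endomorphism ring $\op{End}_{\mathcal{A}}(L)$ with $\op{const}(\mathcal{A}) = K$ by the same computation as above. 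The Jacobson density theorem then yields that $\mathcal{A}$ is dense in $\op{End}_K(L)$: for any $K$-linearly independent $x_1, \ldots, x_m \in L$ and any targets $y_1, \ldots, y_m \in L$ there is $D \in \mathcal{A}$ with $D(x_i) = y_i$ for all $i$.

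Finiteness and surjectivity now follow from pairing density with $\op{dim}_L \mathcal{A} < \infty$. For $K$-independent $x_1, \ldots, x_m$ the evaluation map $\mathcal{A} \to L^m$, $D \mapsto (D(x_1), \ldots, D(x_m))$, is $L$-linear (the $L$-action on $\mathcal{A}$ being left multiplication) and surjective by density, so $m \le \op{dim}_L \mathcal{A}$; hence $\op{dim}_K(L) \le \op{dim}_L \mathcal{A} < \infty$. Taking $x_1, \ldots, x_m$ to be a full $K$-basis of $L$, the same evaluation map identifies $\op{End}_K(L)$ with $L^m$, and density forces $\mathcal{A} = \op{End}_K(L)$ and $\op{dim}_L \mathcal{A} = m = \op{dim}_K(L)$. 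Finally, both assignments visibly reverse inclusions, so the two maps are mutually inverse order-reversing bijections, and the proof is complete.

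I expect the main obstacle to be the density step, i.e.\ essentially the Jacobson density theorem specialized to this setting; if one prefers not to quote it, it must be reproved here by the standard induction on $m$ (the ``separating points'' argument), which is the real content of the statement — everything else is bookkeeping with $L$-dimensions and with the center of a matrix algebra.
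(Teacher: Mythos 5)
Your proof is correct. Note that the paper itself gives no argument for this theorem --- it defers entirely to the citation of Jacobson's \emph{Lectures in Abstract Algebra, Vol.~III}, p.~22 --- and your double-centralizer argument (identify $\op{End}_{\mathcal{A}}(L)$ with $\op{const}(\mathcal{A})=K$ via Schur, apply the density theorem, then use $\dim_L\mathcal{A}<\infty$ to bound $\dim_K(L)$ and force $\mathcal{A}=\op{End}_K(L)$) is essentially the standard proof found in that reference. The one place worth tightening is the computation of $\op{const}(\op{End}_K(L))$: since $\op{const}$ is by definition only evaluated on multiplication operators $m_c$, you do not need the preliminary step identifying general additive operators in the commutant with multiplications there --- only in the Schur's lemma step, where it is genuinely needed and correctly used.
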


Finally, one can deduce a purely inseparable Galois theory of exponent at most one from the discrete Jacobson--Bourbaki Correspondence. It was originally proved in \cite{Jacobson1}, and later it was written down as a corollary from the discrete Jacobson--Bourbaki correspondence in the Theorem 8.47 of \cite{Jacobson3}

\begin{Th}[Jacobson Correspondence]\label{Jacobson}
Let $K$ be a field of characteristic $p>0$ such that $K/K^p$ is a finite extension.

There is an inclusion reversing correspondence between subfields $K^p\subset W\subset K$ and $p$-Lie algebras on $K$. 

Explicitly, it is given by
\begin{align*}
    K \supset W \supset K^{p} &\mapsto T_{K/W} \subset TK,\\
    \F \subset TK &\mapsto K \supset \op{Ann}(\F) \supset K^p,
\end{align*}
where $\op{Ann}(\F)\coloneqq\{x\in K| \forall_{D\in \F} D(x)=0\}$.
Moreover, $p^{\op{dim}_K(\F)}= \op{dim}_W(K)$.
\end{Th}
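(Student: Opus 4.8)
This is the classical Jacobson correspondence, and the plan is to derive it from the discrete Jacobson--Bourbaki correspondence \ref{discrete} (applied with the ground field taken to be $K$ itself) together with the structure of exponent-$\le 1$ extensions. Since $K/K^p$ is finite, every $W$ with $K^p\subseteq W\subseteq K$ makes $K/W$ finite, so such $W$ are exactly the ones among subfields containing $K^p$ that are governed by \ref{discrete}. The work is then to identify which finite-dimensional $K$-subalgebras of $\op{End}_\Z(K)$ arise as $\mathcal{JB}(W)=\op{End}_W(K)$ for these $W$, and to translate the correspondence into the language of relative tangent spaces and annihilators.

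Two inputs are needed. The first is the operator identity $[m_x,D]=-m_{D(x)}$ for $x\in K$ (viewed as the multiplication operator $m_x$) and any derivation $D$; since the condition ``$[m_x,-]=0$'' is stable under $K$-linear combinations and under composition, it follows that for any $K$-subalgebra $\mathcal{A}$ generated over $K$ by a set of derivations $\F$ one has $\op{const}(\mathcal{A})=\op{Ann}(\F)$, and in particular $\op{Ann}(\F)\supseteq K^p$. The second input is the structure of an extension $K^p\subseteq W\subseteq K$: picking a $p$-basis $x_1,\dots,x_m$ of $K/W$ (Definition \ref{Coordinates}), one has $\op{dim}_W(K)=p^m$ and, by Lemma \ref{end=diff} and Corollary \ref{unique expression}, $\op{End}_W(K)=\op{Diff}_W(K)$ has $K$-basis the symbols $\prod_i\frac{1}{a_i!}\frac{\partial^{a_i}}{\partial x_i^{a_i}}$ with $0\le a_i<p$; as $a_i!$ is a unit for $a_i<p$, this algebra is generated over $K$ by the derivations $\partial/\partial x_i\in T_{K/W}$, so $\op{End}_W(K)=\langle T_{K/W}\rangle$. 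Moreover $\op{dim}_K T_{K/W}=\op{dim}_K\Omega_{K/W}=m$, whence $\op{dim}_W(K)=p^{\op{dim}_K T_{K/W}}$; also $T_{K/W}=\op{End}_W(K)\cap TK$, and $T_{K/W}$ is closed under bracket and under $D\mapsto D^p$ (Frobenius, i.e. Jacobson's formula), hence is a $p$-Lie subalgebra of $TK$.

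Now assemble. The assignment $W\mapsto T_{K/W}$ takes values in $p$-Lie subalgebras of $TK$ by the above, and it is injective because $\op{Ann}(T_{K/W})=\op{const}(\langle T_{K/W}\rangle)=\op{const}(\mathcal{JB}(W))=W$ by \ref{discrete}. Conversely, given a $p$-Lie subalgebra $\F\subseteq TK$, set $W\coloneqq\op{Ann}(\F)\supseteq K^p$; then $\F\subseteq T_{K/W}$, so $\langle\F\rangle\subseteq\langle T_{K/W}\rangle=\op{End}_W(K)$ is finite-dimensional, and $\op{const}(\langle\F\rangle)=\op{Ann}(\F)=W$, so \ref{discrete} forces $\langle\F\rangle=\mathcal{JB}(W)=\op{End}_W(K)$, i.e. $\op{dim}_K\langle\F\rangle=\op{dim}_W(K)=p^{\op{dim}_K T_{K/W}}$. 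On the other hand the restricted Poincar\'e--Birkhoff--Witt theorem gives $\op{dim}_K\langle\F\rangle\le p^{\op{dim}_K\F}$; together with $\F\subseteq T_{K/W}$ this forces $\op{dim}_K\F=\op{dim}_K T_{K/W}$ and hence $\F=T_{K/W}$. Thus $W\mapsto T_{K/W}$ and $\F\mapsto\op{Ann}(\F)$ are mutually inverse; they reverse inclusions by inspection, and the chain of equalities just obtained yields the rank formula $p^{\op{dim}_K\F}=\op{dim}_W(K)$.

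The main obstacle is the inequality $\op{dim}_K\langle\F\rangle\le p^{\op{dim}_K\F}$ for a $p$-Lie subalgebra of derivations --- equivalently, the statement that $\F$ can be recovered as $\langle\F\rangle\cap TK$. This is precisely the restricted PBW phenomenon and is the one ingredient that is not formal from \ref{discrete}; everything else is either the dictionary $[m_x,D]=-m_{D(x)}$ or the $p$-basis bookkeeping of Section \ref{section3}. One may phrase this obstacle as the claim that the canonical surjection from the restricted enveloping algebra $u(\F)\twoheadrightarrow\langle\F\rangle$, whose source has $K$-dimension $p^{\op{dim}_K\F}$, is injective.
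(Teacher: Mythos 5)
The paper never writes out a proof of this theorem: it is quoted from the literature, with a pointer to Theorem 8.47 of \cite{Jacobson3}, where it is deduced from the discrete Jacobson--Bourbaki correspondence. Your argument reconstructs exactly that route, so it agrees with the approach the paper endorses by citation. The reconstruction is essentially correct: the identity $[m_x,D]=-m_{D(x)}$ gives $\op{const}(\langle\F\rangle)=\op{Ann}(\F)$; the $p$-basis computation gives $\op{End}_W(K)=\langle T_{K/W}\rangle$ of $K$-dimension $p^{\op{dim}_K T_{K/W}}$ together with $\op{End}_W(K)\cap TK=T_{K/W}$; and Theorem \ref{discrete} then pins down both directions and the rank formula. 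You also correctly isolate the single non-formal ingredient, the bound $\op{dim}_K\langle\F\rangle\le p^{\op{dim}_K\F}$, which is precisely the content the paper itself flags in Remark \ref{<L>capTK=L} (that $\langle\F\rangle\cap TK=\F$ characterizes $p$-Lie subalgebras).

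Two caveats. First, that bound is asserted but not proved; a complete write-up should include the standard induction showing that the ordered monomials $D_1^{a_1}\cdots D_r^{a_r}$ with $0\le a_i<p$, for a $K$-basis $D_1,\dots,D_r$ of $\F$, span $\langle\F\rangle$ over $K$, using the relations $[D,a]=D(a)\in K$, $[D_i,D_j]\in\F$ and $D_i^{p}\in\F$ to reduce arbitrary words. Second, your closing reformulation is slightly off: what the argument consumes is the \emph{spanning} half of restricted PBW (the existence of a surjection from a $K$-module of rank $p^{\op{dim}_K\F}$ onto $\langle\F\rangle$), not its injectivity --- injectivity is an output of the final dimension count, not an input. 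A minor bookkeeping point: Corollary \ref{unique expression} is stated for a $p$-basis of $K/k$, so you are implicitly invoking its routine analogue for a $p$-basis of $K/W$ with $K^p\subseteq W$; that analogue holds by the same computation but deserves a sentence.
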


We remark on a fact from a prove of the above.

\begin{Rem}\label{<L>capTK=L}
    A part of the proof of the Jacobson Correspondence is showing that a $K$-subspace $\F$ of $TK$ is a $p$-Lie algebra on $K$ if and only if $\left<\F\right>\cap TK=\F$, where $\left<\F\right>$ is the smallest subalgebra of $\op{Diff}(K)$ containing $\F$. Hence, one could take this as a definition.
\end{Rem}

\paragraph{Iterated Jacobson Correspondence}

We extend the Theorem \ref{Jacobson} to power towers by simply iterating it. First, we need some definitions.

\begin{Def}
Let $W_\bullet$ be a power tower on $K$.
A sequence $T_{K/W_\bullet}\coloneqq(T_{K/W_1},T_{W_1/W_2},\ldots, T_{W_{n-1}/W_{n}},\ldots)$ is called an \emph{iterated relative tangent bundle} of $W_\bullet$.
\end{Def}

\begin{Def}\label{JacSeq}
A \emph{Jacobson sequence} on $K$  is a sequence of $p$-Lie algebras $(\F_1,\F_2,\ldots)$ such that $\F_1$ is a $p$-Lie algebra on $K$, and $\F_{i+1}$ is a $p$-Lie algebra on $\op{Ann}(\F_i)$ such that we have $\F_{i+1}\cap T_{\op{Ann}(\F_i)/K^{p^{i}}}=0$.

We say that a Jacobson sequence is of length at most $n$ if $\F_N=0$ for $N>n$. And, we say that it is of length $n$ if the $n$ is minimal such.
\end{Def}

\begin{Rem}
    The above condition $\F_{i+1}\cap T_{\op{Ann}(\F_i)/K^{p^{i}}}=0$ is equivalent to a map
    $d(\op{Ann}(\F_i)/K^{p^{i}}): \ \F_{i+1} \to TK^{p^{i}}\otimes \op{Ann}(\F_i)$ being injective, because $T_{\op{Ann}(\F_i)/K^{p^{i}}}=\op{ker}(d(\op{Ann}(\F_i)/K^{p^{i}}):TK\to T\op{Ann}(\F_i)\otimes K)$.
\end{Rem}

We can state the theorem.

\begin{Th}[Iterated Jacobson Correspondence]\label{part2}
Let $K$ be a field of characteristic $p>0$ such that $K/K^{p^\infty}$ is a finitely generated field extension. Let $n\ge 0$ be an integer.

There is a natural bijection between power towers on $K$ (of length at most $n$) and Jacobson sequences on $K$ (of length at most $n$).

Explicitly, this correspondence is given by
\begin{align*}
    W_\bullet \mapsto& T_{K/W_{\bullet}}\\
    (\F_1,\F_2,\ldots) \mapsto& \op{Ann}(\F_\bullet).
\end{align*}
Moreover, we have $\op{dim}_{W_{m}}(W_{m-1})=p^{\op{dim}_{\op{Ann}(\F_{m-1})}(\F_m)}$.
\end{Th}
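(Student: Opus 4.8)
The plan is to iterate the Jacobson Correspondence \ref{Jacobson} one field at a time: at level $i$ I would apply it with the field $W_i$ in the role of ``$K$'' and the subfield $W_{i+1}$ in the role of ``$W$''. Before doing anything else I would check that the hypotheses of \ref{Jacobson} hold at every level. The power-tower axiom gives $K^{p^i}\subset W_i$, hence $K^{p^{i+1}}\subset W_i^p$; since $K/k$ is finitely generated, $K/K^p$ — and therefore $K/K^{p^m}$ for every $m$ — is finite, so $W_i/W_i^p$ is finite. On a power tower one also has $W_{i+1}\subset W_i$, and $W_i^p\subset W_{i+1}$ holds by \ref{Exponents1}; going the other way, $W_{i+1}=\op{Ann}(\F_{i+1})$ automatically lies between $W_i^p$ and $W_i$ by \ref{Jacobson} itself. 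Throughout I will also use the elementary fact that for intermediate fields $W_i^p\subset E,F\subset W_i$ one has $T_{W_i/(E\cdot F)}=T_{W_i/E}\cap T_{W_i/F}$ (a derivation of $W_i$ annihilates a compositum iff it annihilates each factor).

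\emph{From power towers to Jacobson sequences.} Given $W_\bullet$, I would set $\F_m\coloneqq T_{W_{m-1}/W_m}$, with $W_0=K$. By \ref{Jacobson} at level $m-1$ this is a $p$-Lie algebra on $W_{m-1}$ with $\op{Ann}(\F_m)=W_m$; inductively $\op{Ann}(\F_i)=W_i$, so $\F_{i+1}$ is a $p$-Lie algebra on $\op{Ann}(\F_i)$. The remaining condition $\F_{i+1}\cap T_{\op{Ann}(\F_i)/K^{p^i}}=0$ is exactly the place where the power-tower hypothesis enters: by the compositum identity above with $E=W_{i+1}$, $F=K^{p^i}$, and the defining relation $W_{i+1}\cdot K^{p^i}=W_i$,
\[
\F_{i+1}\cap T_{W_i/K^{p^i}}=T_{W_i/W_{i+1}}\cap T_{W_i/K^{p^i}}=T_{W_i/(W_{i+1}\cdot K^{p^i})}=T_{W_i/W_i}=0.
\]

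\emph{From Jacobson sequences to power towers.} Given $(\F_1,\F_2,\ldots)$, I would set $W_0=K$ and $W_i=\op{Ann}(\F_i)\subset W_{i-1}$. By induction $K^{p^i}\subset W_i$ (because $W_i\supset W_{i-1}^p\supset K^{p^i}$) and $W_i/W_i^p$ is finite, so \ref{Jacobson} applies and yields $\F_{i+1}=T_{W_i/W_{i+1}}$. It suffices to prove $W_{i+1}\cdot K^{p^i}=W_i$ for each $i$, since the general relation $W_j=W_i\cdot K^{p^j}$ for $j\le i$ then follows by iterating and using $K^{p^{j+1}}\subset K^{p^j}$. Now $W_{i+1}\cdot K^{p^i}$ is an intermediate field between $W_i^p$ and $W_i$, so by \ref{Jacobson} it equals $\op{Ann}$ of the $p$-Lie algebra $T_{W_i/W_{i+1}}\cap T_{W_i/K^{p^i}}=\F_{i+1}\cap T_{W_i/K^{p^i}}$, which vanishes by the Jacobson-sequence condition; and $\op{Ann}(0)=W_i$. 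The two constructions are mutually inverse because at each level $\op{Ann}$ and $T_{W_i/(-)}$ are mutually inverse by \ref{Jacobson} ($\op{Ann}(T_{W_{i-1}/W_i})=W_i$ and $T_{\op{Ann}(\F_{i-1})/\op{Ann}(\F_i)}=\F_i$). The bijection respects length since $W_N=W_{N+1}\iff\F_{N+1}=T_{W_N/W_N}=0$, so a tower has length $\le n$ exactly when the corresponding sequence does. Finally, $\op{dim}_{W_m}(W_{m-1})=p^{\op{dim}_{\op{Ann}(\F_{m-1})}(\F_m)}$ is the ``moreover'' clause of \ref{Jacobson} applied at level $m-1$, using $W_{m-1}=\op{Ann}(\F_{m-1})$ and $\F_m=T_{W_{m-1}/W_m}$.

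The step I expect to be the main obstacle is precisely the translation in the two middle paragraphs: making rigorous that the somewhat opaque defining condition of a Jacobson sequence, $\F_{i+1}\cap T_{\op{Ann}(\F_i)/K^{p^i}}=0$, is equivalent under the Jacobson Correspondence to the defining relation $W_{i+1}\cdot K^{p^i}=W_i$ of a power tower. This rests on the compositum-versus-intersection identity for relative tangent bundles and on carefully propagating the finiteness hypotheses of \ref{Jacobson} up the tower; everything else is bookkeeping.
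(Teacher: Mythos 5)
Your proposal is correct and follows essentially the same route as the paper: iterate the Jacobson Correspondence level by level and use the identity $T_{W_i/(E\cdot F)}=T_{W_i/E}\cap T_{W_i/F}$ to translate the power-tower relation $W_{i+1}\cdot K^{p^i}=W_i$ into the vanishing condition $\F_{i+1}\cap T_{W_i/K^{p^i}}=0$ and back. You spell out a few points the paper leaves implicit (the finiteness hypotheses at each level and the reduction of the general relation $W_j=W_i\cdot K^{p^j}$ to the single-step one), but the argument is the same.
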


\begin{proof}
    This moreover part is the moreover part from the Jacobson Correspondence \ref{Jacobson}. Also, the parts about preserving the lengths, and operations being inverse to each other are trivial. So, the only thing to show is that these operations are well defined.

    Let $W_\bullet$ be a power tower on $K$. We are going to show that its iterative relative tangent bundle is a Jacobson sequence. The first part of the definition is trivial, because, by the Jacobson Correspondence, we have
    $W_n=\op{Ann}(T_{W_{n-1}/W_n})$ for all $n\ge 1$ and  $T_{W_{n-1}/W_n}$ is a $p$-Lie algebra on $W_{n-1}$. The second part is to show that $T_{W_{n-1}/W_n}\cap T_{W_{n-1}/K^{p^{n-1}}}=0$. This follows again from the Jacobson Correspondence and the Definition \ref{powertower}. Indeed, let $\F=T_{W_{n-1}/W_n}\cap T_{W_{n-1}/K^{p^{n-1}}}$, then $\op{Ann}(\F)=W_n \cdot K^{p^{n-1}}=W_{n-1}$, so $\F=T_{W_{n-1}/W_{n-1}}=0$.

    Let $(\F_1,\F_2,\ldots)$ be a Jacobson sequence on $K$. We have that $K\supset \op{Ann}(\F_{1})\supset \op{Ann}(\F_{2})\supset \ldots$, and we need to show that $\op{Ann}(\F_{m})\cdot K^{p^{m-1}}=\op{Ann}(\F_{m-1})$. By the Jacobson Correspondence, we have that an equality $T_{\op{Ann}(\F_{m-1})/\op{Ann}(\F_{m})\cdot K^{p^{m-1}}}=T_{\op{Ann}(\F_{m-1})/\op{Ann}(\F_{m})}\cap T_{\op{Ann}(\F_{m-1})/K^{p^{m-1}}}=0$ holds, so we get $\op{Ann}(\F_{m-1})=\op{Ann}(\F_{m})\cdot K^{p^{m-1}}$.
\end{proof}

\begin{Rem}\label{origin.iterated}
    It is possible to discover the Theorem \ref{part2} by starting with a subfield $W$ of $K$ and trying to get the most from the Jacobson Correspondence. First, we compute $T_{K/W}$, and then we check that $\op{Ann}(T_{K/W})=W_1$. Now, we can observe that $K\supset W_1\supset W$, so we can repeat this procedure for $W_1 \supset W$. We get $T_{W_1/W}$, and
    $\op{Ann}(T_{W_1/W})=W\cdot W_1^p=W\cdot (W\cdot K^p)^p=W\cdot K^{p^2}=W_2$.
    This pattern continues and it leads us precisely to this theorem.
\end{Rem}

 Here are some examples.

\begin{Example}
An extension $K/K$ corresponds to $(0,0,\ldots)$.
\end{Example}

\begin{Example}
An extension $K/K^{p^n}$ corresponds to $(TK, TK^p,\ldots, TK^{p^{n-1}},0,0,\ldots)$.
\end{Example}

\begin{Example}
An extension $K/k$ corresponds to $(TK, TK^p,TK^{p^2},\ldots)$.
\end{Example}

\begin{Example}
Let $k$ be a perfect field. Let $K=k(x,y)$. Then a power tower of $k(x)$ on $ k(x,y)$ corresponds to a Jacobson sequence 
\[
(k(x,y)\frac{\partial}{\partial y},k(x,y^p)\frac{\partial}{\partial y^p},k(x,y^{p^2})\frac{\partial}{\partial y^{p^2}},\ldots).
\]
\end{Example}

\paragraph{The Main Theorem}

We prove our correspondence \ref{MainTheorem} by connecting the iterated Jacobson Correspondence \ref{part2} via an operation of unpacking \ref{part3} with the discrete Jacobson--Bourbaki Correspondence.

Let $K$ be a field of characteristic $p>0$ such that $K/k$ is a finitely generated field extension, where $k=K^{p^\infty}$.

We start with some definitions. First, we observe that a canonical filtration on differential operators, see the Definition \ref{canonical}, restricts well to all subalgebras.

\begin{Def}
Let $K\subset \mathcal{D}\subset \op{Diff}(K)$ be a $K$-subalgebra of $\op{Diff}(K)$. It admits a filtration by finite subalgebras $\mathcal{D}_n \coloneqq \mathcal{D} \cap \op{Diff}_{p^n}(K)$. We call it a canonical filtration of $\mathcal{D}$.

An algebra $\mathcal{D}$ is called of height at most $n$ if $\mathcal{D}=\mathcal{D}_n$. And, of height $n$, if the $n$ is minimal such.
\end{Def}

\begin{Def}
Let $W_\bullet$ be a power tower on $K$. An algebra of differential operators on $K$ relative to the tower $W_\bullet$ is defined by $\op{Diff}_{W_\bullet}(K)\coloneqq \bigcup_n \op{Diff}_{W_n}(K)$.
\end{Def}

Here are two reality checks.

\begin{Lem}\label{L1}
Let $W_\bullet$ be a power tower on $K$. Then $\op{Diff}_{W_\bullet}(K)_n=\op{Diff}_{W_n}(K)$.
\end{Lem}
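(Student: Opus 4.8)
The plan is to unwind the definitions on both sides and check that the canonical filtration of $\op{Diff}_{W_\bullet}(K)$, as defined for an arbitrary $K$-subalgebra $\mathcal{D}$ by $\mathcal{D}_n \coloneqq \mathcal{D}\cap\op{Diff}_{p^n}(K)$, agrees term by term with the exhaustion $\op{Diff}_{W_\bullet}(K)=\bigcup_m\op{Diff}_{W_m}(K)$. So I want to prove
\[
\left(\bigcup_{m\ge 0}\op{Diff}_{W_m}(K)\right)\cap\op{Diff}_{p^n}(K)=\op{Diff}_{W_n}(K).
\]
First I would prove the inclusion $\supseteq$: since $W_n\supset K^{p^n}$ by Lemma~\ref{Exponents1} (a power tower is a tower of exponent-$\le 1$ extensions, hence $W_n\supset W_n^p\supset\cdots$, and more directly $W_n\supset K^{p^n}$ is immediate from $W_n=W_n\cdot K^{p^n}$), a $W_n$-linear operator is in particular $K^{p^n}$-linear, so $\op{Diff}_{W_n}(K)\subset\op{Diff}_{p^n}(K)$; and trivially $\op{Diff}_{W_n}(K)\subset\bigcup_m\op{Diff}_{W_m}(K)$. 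For the reverse inclusion $\subseteq$: take $D$ in the left-hand side, so $D\in\op{Diff}_{W_m}(K)$ for some $m$ and $D\in\op{Diff}_{p^n}(K)$. Without loss of generality $m\ge n$ (enlarging $m$ only enlarges $\op{Diff}_{W_m}(K)$, as $W_m\subset W_n$ gives $\op{Diff}_{W_n}(K)\subset\op{Diff}_{W_m}(K)$). Then $D$ is simultaneously $W_m$-linear and $K^{p^n}$-linear as an additive operator on $K$, hence it is linear over the compositum $W_m\cdot K^{p^n}$, which equals $W_n$ by the definition of a power tower (Definition~\ref{powertower}). Thus $D\in\op{Diff}_{W_n}(K)$.

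The one genuinely substantive point, and the step I expect to be the main obstacle, is the claim that an additive operator which is linear over two subfields $A,B\subset K$ is automatically linear over their compositum $A\cdot B$. For the "$\op{End}$" version this is transparent: $\op{End}_A(K)\cap\op{End}_B(K)=\op{End}_{A\cdot B}(K)$ because the subring of $K$ over which a fixed additive $D$ is linear, namely $\{c\in K : [c,D]=0\}$, is closed under the field operations of $K$ and hence is a subfield containing both $A$ and $B$, therefore contains $A\cdot B$. To transfer this to $\op{Diff}$ rather than $\op{End}$ I would invoke Lemma~\ref{end=diff}: the extensions $K/W_m$ and $K/K^{p^n}$ are of finite exponent (for $K/K^{p^n}$ this is immediate; for $K/W_m$ it follows from Lemma~\ref{finite exponents} together with the standing finite-generation hypothesis on $K/k$, or simply from $W_m\supset K^{p^m}$), so $\op{Diff}_{W_m}(K)=\op{End}_{W_m}(K)$ and $\op{Diff}_{p^n}(K)=\op{End}_{K^{p^n}}(K)$, and likewise $\op{Diff}_{W_n}(K)=\op{End}_{W_n}(K)$ since $K/W_n$ is of finite exponent as well. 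Hence the $\op{Diff}$ identity reduces to the elementary $\op{End}$ identity above.

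Putting the pieces together gives the equality, and the proof is short. I would present it as: reduce to $\op{End}$ via Lemma~\ref{end=diff}, note $\op{End}_{W_n}(K)\subseteq\op{End}_{p^n}(K)$ since $K^{p^n}\subset W_n$, note $\op{End}_{W_n}(K)\subseteq\op{Diff}_{W_\bullet}(K)$ by definition, and for the converse observe that any $D$ in the intersection is linear over $W_m$ for some $m\ge n$ and over $K^{p^n}$, hence over $W_m\cdot K^{p^n}=W_n$. No nontrivial calculation is needed beyond these observations.
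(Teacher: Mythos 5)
Your proof is correct and follows essentially the same route as the paper: reduce to a single $W_m$ with $m\ge n$, convert $\op{Diff}$ to $\op{End}$ via Lemma~\ref{end=diff}, and observe that $\op{End}_{W_m}(K)\cap\op{End}_{K^{p^n}}(K)=\op{End}_{W_m\cdot K^{p^n}}(K)=\op{End}_{W_n}(K)$. The only difference is that you spell out the centralizer-is-a-subfield argument that the paper leaves implicit, which is a welcome addition rather than a divergence.
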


\begin{proof}
It is enough to show $\op{Diff}_{W_N}(K)_n=\op{Diff}_{W_n}(K)$ for any $N\ge n$. But, this is equivalent to $\op{Diff}_{W_N}(K)\cap \op{Diff}_{p^n}(K)=\op{Diff}_{W_n}(K)$, what is nothing else but an equality $\op{End}_W(K)\cap \op{End}_{K^{p^n}}(K)=\op{End}_{W_n}(K)$, by the Lemma \ref{end=diff}.
\end{proof}

\begin{Lem}\label{L2}
Let $k\subset W\subset K$ be a subfield of $K$. Then we have that $\op{Diff}_W(K)=\op{Diff}_{W_\bullet}(K)$.
\end{Lem}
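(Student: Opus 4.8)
The plan is to establish the two inclusions separately. For $\op{Diff}_{W_\bullet}(K)\subseteq\op{Diff}_W(K)$, I would note that $\op{Diff}_{W_\bullet}(K)=\bigcup_n\op{Diff}_{W_n}(K)$ and that each $W_n=W\cdot K^{p^n}$ contains $W$; since an inclusion of subfields $W\subseteq W_n$ produces a surjection of principal parts $\mathcal{P}^{\le m}_{K/W}\twoheadrightarrow\mathcal{P}^{\le m}_{K/W_n}$ for every $m$, dualizing gives $\op{Diff}_{W_n}(K)\subseteq\op{Diff}_W(K)$, and the union inclusion follows.

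For the reverse inclusion I would take $D\in\op{Diff}_W(K)$ and show it already lies in $\op{Diff}_{W_n}(K)$ once $n$ is large. Because $k\subseteq W$, the operator $D$ is in particular a differential operator on $K$ relative to $k$, so it has a finite order $m$; by the inclusion $\op{Diff}^{\le p^{n-1}}_k(K)\subseteq\op{Diff}_{p^n}(K)$ proved while checking that the canonical filtration is a filtration, $D$ lands in $\op{Diff}_{p^n}(K)=\op{End}_{K^{p^n}}(K)$ for any $n$ with $p^{n-1}\ge m$. Fixing such an $n$ and viewing $D$ through the embedding of Observation \ref{embedding}, $D$ is then an additive operator on $K$ that is simultaneously $W$-linear and $K^{p^n}$-linear.

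The crux is then to recognize that such a $D$ is automatically $W_n$-linear. Here I would use that an additive operator commuting with multiplication by all of $W$ and all of $K^{p^n}$ commutes with multiplication by every element of the subring $W[K^{p^n}]\subseteq K$ generated by them, and that this subring is in fact the composite field $W_n$: it is a domain, and it is finite over the field $K^{p^n}$ because $K/K^{p^n}$ is integral, hence it is a field, necessarily the smallest one containing $W$ and $K^{p^n}$. (This is the same ``generated subring equals composite'' observation that powers the Lemma \ref{L1}.) Consequently $D\in\op{End}_{W_n}(K)$; and since $W_n\supseteq K^{p^n}$ the extension $K/W_n$ has finite exponent, so $\op{End}_{W_n}(K)=\op{Diff}_{W_n}(K)$ by the Lemma \ref{end=diff}. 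Thus $D\in\op{Diff}_{W_n}(K)\subseteq\op{Diff}_{W_\bullet}(K)$, finishing the proof.

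I do not expect a serious obstacle here; the only point to handle with care is that $\op{Diff}_W(K)$ can be strictly larger than any single intersection of $\op{End}$'s when $K/W$ has infinite exponent, so the argument must genuinely exploit that every differential operator has finite order in order to reduce to a finite-exponent situation — and it must land in a $\op{Diff}_{W_n}$ that is again honestly an algebra of differential operators, which is exactly why $K/W_n$ being of finite exponent (so that the Lemma \ref{end=diff} applies) is needed.
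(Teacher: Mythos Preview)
Your proof is correct and is essentially the same argument as the paper's, just unpacked in more detail. The paper phrases it as: compute the canonical filtration $\op{Diff}_W(K)_n=\op{Diff}_W(K)\cap\op{Diff}_{p^n}(K)$, observe this equals $\op{Diff}_{W_n}(K)$ ``by the very construction'', and conclude that $\op{Diff}_W(K)$ and $\op{Diff}_{W_\bullet}(K)$ have the same canonical filtration (using Lemma~\ref{L1}), hence coincide. Your two inclusions, together with the step ``$W$-linear and $K^{p^n}$-linear implies $W_n$-linear, and then $\op{End}_{W_n}(K)=\op{Diff}_{W_n}(K)$ by Lemma~\ref{end=diff}'', are exactly what makes that ``by the very construction'' rigorous; this is the same $\op{End}$-intersection computation that appears in the proof of Lemma~\ref{L1}.
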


\begin{proof}
We look at a canonical filtration: $\op{Diff}_W(K)_n=\op{Diff}_W(K) \cap \op{Diff}_{K^{p^n}}(K)$. The right hand side, by the very construction, is $\op{Diff}_{W_n}(K)$. So, subalgebras $\op{Diff}_W(K), \op{Diff}_{W_\bullet}(K)$ have the same canonical filtrations. Therefore, they are equal.
\end{proof}

The following is a key corollary from the discrete Jacobson--Bourbaki Correspondence.

\begin{Cor}\label{effective}
Finite $K$-subalgebras of $\op{Diff}(K)$ naturally correspond to subfields of $K$ of finite exponents by operations $K\supset W\supset K^{p^n}\mapsto \op{Diff}_W(K)$, and $\mathcal{D}\mapsto \op{const}(\mathcal{D})\coloneqq \{x\in L; \forall_{D\in \mathcal{D}} [x,D]=0\}$.
\end{Cor}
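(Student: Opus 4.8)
The plan is to deduce this from the discrete Jacobson--Bourbaki Correspondence \ref{discrete} combined with the Lemma \ref{end=diff}. First I would set up the dictionary: a finite $K$-subalgebra $\mathcal{D}\subset \op{Diff}(K)$ is in particular a finite-dimensional $K$-subalgebra of $\op{End}_\Z(K)$, so by \ref{discrete} it equals $\mathcal{JB}(W)=\op{End}_W(K)$ for a unique subfield $W\subset K$ with $K/W$ finite, and conversely every such $W$ gives a finite $\op{End}_W(K)$. The content to be added is that (a) $W$ contains some $K^{p^n}$, i.e. $K/W$ is purely inseparable of finite exponent, and (b) under this hypothesis $\op{End}_W(K)$ actually lands inside $\op{Diff}(K)$ and equals $\op{Diff}_W(K)$.

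For (b), once we know $K/W$ has finite exponent, the Lemma \ref{end=diff} gives $\op{Diff}_W(K)=\op{End}_W(K)$ directly, so $\mathcal{D}=\op{End}_W(K)=\op{Diff}_W(K)$, and $\op{const}(\mathcal{D})=\op{const}(\mathcal{JB}(W))=W$ by \ref{discrete}. So the whole statement reduces to (a): showing that if $\mathcal{D}\subset\op{Diff}(K)$ is a finite $K$-subalgebra then the associated subfield $W=\op{const}(\mathcal{D})$ satisfies $W\supset K^{p^n}$ for some $n$. I would argue this as follows. Since $\mathcal{D}$ is finite over $K$, all its elements have bounded order, hence $\mathcal{D}\subset\op{Diff}_{p^m}(K)$ for some $m$ by the description of the canonical filtration and the Corollary \ref{unique expression}; indeed $\op{Diff}_{p^m}(K)\supset\op{Diff}^{\le p^{m-1}}_k(K)$ so choosing $m$ with $p^{m-1}$ exceeding the maximal order of a (finite) $K$-basis of $\mathcal{D}$ suffices. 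Every operator in $\op{Diff}_{p^m}(K)=\op{End}_{K^{p^m}}(K)$ is $K^{p^m}$-linear, hence commutes with multiplication by any element of $K^{p^m}$; therefore $K^{p^m}\subset\op{const}(\mathcal{D})=W$. This gives finite exponent, and we are done.

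The one point needing care — and the main (mild) obstacle — is the direction asserting that $\op{Diff}_W(K)$ is genuinely a \emph{subalgebra} of $\op{Diff}(K)=\op{Diff}_k(K)$ for $K/W$ of finite exponent, so that the correspondence has the stated target and source; this is exactly where $k\subset W$ is used, since one needs $W\supset K^{p^n}\supset k$ so that $W$-linear operators are in particular $k$-linear. With that observed, the bijection is just the restriction of \ref{discrete} to the sublattice of subfields of finite exponent, which under $\mathcal{JB}$ corresponds (via \ref{end=diff}) precisely to the sublattice of $K$-subalgebras contained in some $\op{Diff}_{p^n}(K)$, i.e. the finite $K$-subalgebras of $\op{Diff}(K)$. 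Order-reversal and the inverse relations $W=\op{const}(\op{Diff}_W(K))$, $\mathcal{D}=\op{Diff}_{\op{const}(\mathcal{D})}(K)$ are then inherited verbatim from \ref{discrete}.
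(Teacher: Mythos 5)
Your proof is correct and follows essentially the same route as the paper: restrict the discrete Jacobson--Bourbaki Correspondence \ref{discrete} to subfields of finite exponent and identify $\op{End}_W(K)$ with $\op{Diff}_W(K)$ via the Lemma \ref{end=diff}. You additionally spell out why a finite $K$-subalgebra of $\op{Diff}(K)$ lies in some $\op{Diff}_{p^m}(K)$ (bounded order plus $\op{Diff}^{\le p^{m-1}}_k(K)\subset\op{Diff}_{p^m}(K)$), so that $\op{const}(\mathcal{D})\supset K^{p^m}$ -- a point the paper's one-line proof leaves implicit -- and this added step is sound.
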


\begin{proof}
    By the Theorem \ref{discrete}, extensions $K\supset W\supset K^{p^n}$ corresponds to an inclusion $\op{End}_W(K) \subset \op{End}_{K^{p^n}}(K)$ that, by the Lemma \ref{end=diff}, is $\op{Diff}_W(K) \subset \op{Diff}_{p^n}(K)\subset \op{Diff}(K)$.
\end{proof}

\begin{Rem}\label{groupoid}
    An alternative proof of the Corollary \ref{effective} goes by invocation of a theorem that finite locally free groupoids admit and are determined by their quotients, see 03BE at \cite{Stacks}. This is the way how Ekedahl approached a similar problem for varieties in his paper \cite{Ekedahl87}.
\end{Rem}

We extend the above to power towers.

\begin{Th}\label{part1}
Let $K$ be a field of characteristic $p>0$ such that $K/K^{p^\infty}$ is a finitely generated field extension. We put $k=K^{p^\infty}$. There is a natural correspondence between power towers on $K$, and $K$-subalgebras of differential operators on $K$ over $k$.

Explicitly,
this correspondence is given by the following formulas:
\begin{align*}
   W_\bullet\mapsto&  \op{Diff}_{W_\bullet}(K)\subset  \op{Diff}_k(K)\\
   K\subset \mathcal{D}\subset \op{Diff}_k(K) \mapsto& \op{const}(\mathcal{D}_\bullet)\coloneqq \{x\in L; \forall_{D\in \mathcal{D}_\bullet} [x,D]=0\}.
\end{align*}
Moreover, $\op{dim}_K(\mathcal{D}_i)=\op{dim}_{W_i}(K)$.
\end{Th}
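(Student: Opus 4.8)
The plan is to build the bijection by combining the canonical filtration with the discrete Jacobson--Bourbaki correspondence in the form of Corollary \ref{effective}, passing to the limit. First I would check that both operations are well defined. Given a power tower $W_\bullet$, the algebra $\op{Diff}_{W_\bullet}(K)=\bigcup_n \op{Diff}_{W_n}(K)$ is a $K$-subalgebra of $\op{Diff}_k(K)$: each $\op{Diff}_{W_n}(K)$ is a subalgebra, and the union is ascending because $W_{n+1}\subset W_n$ gives $\op{Diff}_{W_n}(K)\subset\op{Diff}_{W_{n+1}}(K)$; it sits inside $\op{Diff}_k(K)$ since $k\subset W_n$ for all $n$. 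Conversely, given $K\subset\mathcal{D}\subset\op{Diff}_k(K)$, its canonical filtration $\mathcal{D}_n=\mathcal{D}\cap\op{Diff}_{p^n}(K)$ consists of finite $K$-subalgebras (being subalgebras of the finite algebra $\op{Diff}_{p^n}(K)$), so Corollary \ref{effective} produces subfields $W_n\coloneqq\op{const}(\mathcal{D}_n)$ with $K\supset W_n\supset K^{p^n}$. I would then verify that $W_\bullet$ so obtained is a power tower, i.e. $W_j\cdot K^{p^i}=W_i$ for $i\le j$: this is exactly the statement that under the discrete Jacobson--Bourbaki correspondence the intersection $\mathcal{D}_i = \mathcal{D}_j\cap\op{Diff}_{p^i}(K)$ (which holds since $\mathcal{D}_j\cap\op{Diff}_{p^i}(K)=\mathcal{D}\cap\op{Diff}_{p^j}(K)\cap\op{Diff}_{p^i}(K)=\mathcal{D}\cap\op{Diff}_{p^i}(K)=\mathcal{D}_i$) corresponds to a composite of fields; concretely $\op{const}(\mathcal{D}_j\cap\op{Diff}_{p^i}(K)) = \op{const}(\mathcal{D}_j)\cdot\op{const}(\op{Diff}_{p^i}(K)) = W_j\cdot K^{p^i}$, using that the Jacobson--Bourbaki correspondence is order-reversing and turns intersections of subalgebras into composites of the corresponding subfields (applied to the finite extension $K/K^{p^{\max(i,j)}}$).

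Next I would show the two operations are mutually inverse. Starting from a power tower $W_\bullet$, Lemma \ref{L1} gives $\op{Diff}_{W_\bullet}(K)_n=\op{Diff}_{W_n}(K)$, and then Corollary \ref{effective} applied to the finite extension $K/W_n \supset K/K^{p^n}$ recovers $\op{const}(\op{Diff}_{W_n}(K))=W_n$; since this holds for every $n$ we recover $W_\bullet$. Starting from a subalgebra $\mathcal{D}$, we get the power tower $W_n=\op{const}(\mathcal{D}_n)$, and applying the first operation gives $\bigcup_n\op{Diff}_{W_n}(K)$; by Corollary \ref{effective} again $\op{Diff}_{W_n}(K)=\mathcal{D}_n$, so the union is $\bigcup_n\mathcal{D}_n=\mathcal{D}$ because every differential operator has finite order and hence lies in some $\op{Diff}_{p^n}(K)$, so $\mathcal{D}=\bigcup_n(\mathcal{D}\cap\op{Diff}_{p^n}(K))$. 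Naturality (compatibility with inclusions, i.e. order-preservation in the appropriate sense) follows since all the constructions are given by intersections and unions.

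Finally, the moreover clause $\op{dim}_K(\mathcal{D}_i)=\op{dim}_{W_i}(K)$ is immediate from the dimension statement in the discrete Jacobson--Bourbaki correspondence (Theorem \ref{discrete}): applied to the finite extension $K/W_i$ with $W_i\supset K^{p^i}$, one has $\op{dim}_{W_i}(K)=\op{dim}_K(\mathcal{JB}(W_i))=\op{dim}_K(\op{End}_{W_i}(K))=\op{dim}_K(\op{Diff}_{W_i}(K))=\op{dim}_K(\mathcal{D}_i)$, where the last equality uses Lemma \ref{end=diff} and Lemma \ref{L1}.

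I expect the main obstacle to be the well-definedness of the field-to-algebra direction, specifically verifying that $W_\bullet\coloneqq\op{const}(\mathcal{D}_\bullet)$ really is a power tower — that is, carefully translating the identity $\mathcal{D}_j\cap\op{Diff}_{p^i}(K)=\mathcal{D}_i$ into the field identity $W_j\cdot K^{p^i}=W_i$ via the order-reversing, intersection-to-composite behaviour of Jacobson--Bourbaki. Everything else is bookkeeping with the canonical filtration and repeated application of Corollary \ref{effective} and Lemmas \ref{end=diff}, \ref{L1}, \ref{L2}.
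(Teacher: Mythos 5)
Your proposal is correct and follows essentially the same route as the paper: restrict the canonical filtration to $\mathcal{D}$, apply the discrete Jacobson--Bourbaki correspondence (Corollary \ref{effective}) levelwise, and recover the power-tower identity $W_j\cdot K^{p^i}=W_i$ from the intersection identity $\op{End}_{W_j}(K)\cap\op{End}_{K^{p^i}}(K)=\op{End}_{W_j\cdot K^{p^i}}(K)$, which is exactly the content the paper delegates to Lemmas \ref{L1} and \ref{L2}. You merely spell out more explicitly the intersection-to-composite step and the dimension count that the paper leaves implicit.
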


\begin{proof}
    Any subalgebra $\mathcal{D}$ is determined by its canonical filtration, and we have $\mathcal{D}_1\subset \mathcal{D}_2\subset \ldots \subset \mathcal{D}$. 
    By the Corollary \ref{effective}, there are subfields $W_i$ corresponding to $\mathcal{D}_i$ such that we have
    $\mathcal{D}_i=\op{Diff}_{W_i}(K)$. However, by the Lemmas \ref{L1} and \ref{L2}, it means that $W_\bullet$ is a power tower on $K$. 
    Therefore, a canonical filtration determines a power tower on $K$, and this power tower determines the canonical filtration on the algebra of differential operators relative to it. This finishes the proof.
\end{proof}

The iterated Jacobson Correspondence \ref{part2} connects with the above proposition via an operation from the following proposition. We call this operation an unpacking. It produces a Jacobson sequence out of a subalgebra of differential operators.

\begin{Prop}[Unpacking]\label{part3}
Let $W_\bullet$ be a power tower on $K$. Let $(\F_i)$ be an iterated relative tangent bundle of $W_\bullet$, and let $\mathcal{D}=\op{Diff}_{W_\bullet}(K)$. Then, for $i\ge 1$, we have
\[
d(K/W_{i-1})(\mathcal{D})\cap TW_{i-1} = \F_i.
\]
In particular, the first degree of $\mathcal{D}$, i.e. $\mathcal{D}\cap TK$, is equal $T_{K/W_1}$.
\end{Prop}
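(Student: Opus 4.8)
The plan is to unwind both sides and reduce everything to honest linear endomorphisms. Recall that $\F_i = T_{W_{i-1}/W_i}$ (with $W_0 \coloneqq K$), that $\mathcal{D} = \op{Diff}_{W_\bullet}(K) = \bigcup_{n\ge 0}\op{Diff}_{W_n}(K)$, and that by Proposition \ref{differential} the map $d(K/W_{i-1})$ is simply ``restrict the operator to $W_{i-1}$''. The defining property of a power tower gives $W_n\supseteq K^{p^n}$ for every $n$, so each $K/W_n$ has finite exponent; hence by Lemma \ref{end=diff} we may replace $\op{Diff}_{W_n}(K)$ by $\op{End}_{W_n}(K)$ throughout. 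Since $\op{End}_{W_n}(K)\subseteq\op{End}_{W_m}(K)$ for $m\ge n$, we may and do assume below that the index $n$ is $\ge i$.

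The key point is the following: \emph{for $n\ge i$ one has $T_{W_{i-1}/W_n}=T_{W_{i-1}/W_i}$}. Indeed $W_n\subseteq W_i$ gives $\supseteq$ at once; conversely, if $\delta$ is a $k$-derivation of $W_{i-1}$ killing $W_n$, then — using $K^{p^{i-1}}\subseteq W_{i-1}$, so that every element of $K^{p^i}$ is a $p$-th power inside $W_{i-1}$ — the derivation $\delta$ automatically kills $K^{p^i}$, hence kills the compositum $W_n\cdot K^{p^i}=W_i$. In other words, once we restrict to $W_{i-1}$ and keep only genuine derivations, the tower is invisible beyond level $i$. I expect this collapsing step to be the only real content of the proof; everything else is bookkeeping on top of Proposition \ref{differential} and Lemma \ref{end=diff}.

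Granting the key point, both inclusions are immediate. For ``$\subseteq$'': an element of $d(K/W_{i-1})(\mathcal{D})\cap TW_{i-1}$ has the form $\eta = D|_{W_{i-1}}$ with $D\in\op{End}_{W_n}(K)$ for some $n\ge i$, and the hypothesis $\eta\in TW_{i-1}$ says $\eta$ is a $k$-derivation of $W_{i-1}$ with values in $W_{i-1}$; being also $W_n$-linear, $\eta\in T_{W_{i-1}/W_n}=T_{W_{i-1}/W_i}=\F_i$. For ``$\supseteq$'': given $\delta\in\F_i=T_{W_{i-1}/W_i}\subseteq\op{End}_{W_i}(W_{i-1})$, use that $K/W_i$ is finite to pick a $W_i$-module complement $V$ with $K=W_{i-1}\oplus V$, and let $D\in\op{End}_{W_i}(K)=\op{Diff}_{W_i}(K)\subseteq\mathcal{D}$ act by $\delta$ on $W_{i-1}$ and by $0$ on $V$; then $d(K/W_{i-1})(D)=\delta\in TW_{i-1}$. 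Finally, the displayed ``in particular'' is just the case $i=1$, where $W_0=K$ and $d(K/K)=\op{id}$, so that $\mathcal{D}\cap TK = T_{K/W_1}$.
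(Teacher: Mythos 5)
Your proof is correct, but it takes a genuinely different route from the paper's. The paper treats $i=1$ separately via the Jacobson machinery (it identifies $\left<T_{K/W_1}\right>$ with $\mathcal{D}_1$ and invokes the fact that $\left<\F\right>\cap TK=\F$ for a $p$-Lie algebra $\F$), and for $i>1$ it first cuts the intersection down from $\mathcal{D}$ to the finite piece $\mathcal{D}_i$ of the canonical filtration, then uses the surjectivity of the differential to identify $d(K/W_{i-1})(\op{End}_{W_i}(K))$ with $\op{End}_{W_i}(W_{i-1})\otimes K$ before intersecting with $TW_{i-1}$. You instead give a uniform argument for all $i\ge 1$ whose only real content is the collapsing lemma $T_{W_{i-1}/W_n}=T_{W_{i-1}/W_i}$ for $n\ge i$, proved by the elementary observation that a derivation of $W_{i-1}$ kills $p$-th powers, hence kills $K^{p^i}$ (as $K^{p^{i-1}}\subset W_{i-1}$), hence kills the compositum $W_n\cdot K^{p^i}=W_i$; the two inclusions then reduce to restriction of operators and extension by zero on a $W_i$-linear complement. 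This buys you independence from the Jacobson Correspondence and from the paper's somewhat terse reduction step $d(K/W_{i-1})(\mathcal{D})\cap \left<TW_{i-1}\right>\otimes K\cap TW_{i-1}=d(K/W_{i-1})(\mathcal{D}_i)\cap TW_{i-1}$, at the cost of obscuring the structural point the paper is making, namely that the $i$-th entry of the Jacobson sequence is governed exactly by the $i$-th step $\mathcal{D}_i$ of the canonical filtration. Both arguments rely on the same external inputs (Lemma \ref{end=diff}, the description of $d(K/W)$ as restriction from Proposition \ref{differential}, and finiteness of $K/W_i$ coming from $W_i\supset K^{p^i}$).
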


\begin{proof}
We start with the first degree. Observe that $\left<T_{K/W_1}\right>=\mathcal{D}_1$ by the Corollary \ref{effective} and the Theorem \ref{Jacobson}, because both correspond to the same object, i.e. $W_1$. So, by the Remark \ref{<L>capTK=L}, we have $\mathcal{D}\cap TK =\mathcal{D}_1\cap TK=T_{K/W_1}=\F_1$.

Let $i> 1$. First, we observe that the intersection makes sense, because we can inject $W$-space $TW_{i-1}$ into $\op{Diff}(W_{i-1})\otimes K$. Second, we observe that $d(K/W_{i-1})(\mathcal{D})\cap TW_{i-1}=d(K/W_{i-1})(\mathcal{D})\cap \left<TW_{i-1}\right>\otimes K\cap TW_{i-1}=d(K/W_{i-1})(\mathcal{D}_i)\cap TW_{i-1}$. Therefore, we can work with $\mathcal{D}_i$. 
By the Lemma \ref{end=diff}, it is $\op{End}_{W_i}(K)$. By the Proposition \ref{differential}, $d(K/W_{i-1})(\op{End}_{W_i}(K))$ is equal $\op{End}_{W_i}(K)$ restricted to $W_{i-1}$, but this is precisely, as an algebra of operators, $\op{End}_{W_i}(W_{i-1})\otimes K$. Therefore, $\op{End}_{W_i}(W_{i-1})\otimes K \cap TW_{i-1}=T_{W_i/W_{i-1}}=\F_i$.
\end{proof}

Here is the main theorem of this paper.

\begin{Th}\label{MainTheorem}
Let $K$ be a field of characteristic $p>0$. Let $k\coloneqq \bigcap_{n\ge 0} K^{p^n}$. We assume that $K/k$ is a finitely generated field extension.

There are natural bijections between the following data on $K$:
\begin{itemize}
    \item power towers on $K$, see the Definition \ref{powertower},
    \item Jacobson sequences on $K$, see the Definition \ref{JacSeq},
    \item $K$-subalgebras of differential operators on $K$: $K\subset \mathcal{D}\subset \op{Diff}_k(K)$
\end{itemize}
Explicitly,
these bijections are given by the following operations:
\begin{itemize}
    \item an iterated tangent bundle: $W_\bullet \mapsto (T_{W_0/W_1},T_{W_1/W_2},T_{W_2/W_3}, \ldots)$,
    \item an iterated $\op{Ann}(\bullet)$: $(\F_1,\F_2,\ldots)\mapsto \op{Ann}(\F_\bullet)$,
    \item an algebra of relative differential operators: $W_\bullet \mapsto \op{Diff}_{W_\bullet}(K)$,
    \item an unpacking: $\mathcal{D} \mapsto d(K/W_{n-1})(\mathcal{D})\cap TW_{n-1} = \F_n$.
\end{itemize}
Moreover, we have that ranks $\op{rk}(\F_i)\coloneqq \op{dim}_{W_{i-1}}(\F_i)$ are finite and nonincreasing: $\op{rk}(\F_i)\ge \op{rk}(\F_{i+1})$, and they satisfy $p^{\op{rk}(\F_i)}=\op{dim}_{W_{i+1}}(W_i)$.
\end{Th}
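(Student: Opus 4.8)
The plan is to deduce Theorem \ref{MainTheorem} by amalgamating the three correspondences already proved — the iterated Jacobson Correspondence \ref{part2}, the power-tower/subalgebra correspondence \ref{part1}, and the unpacking identity \ref{part3} — together with the monotonicity of degrees \ref{NonincreasingOrder}. Theorem \ref{part2} supplies a bijection between power towers on $K$ and Jacobson sequences on $K$, realised by $W_\bullet \mapsto T_{K/W_\bullet}$ and $(\F_\bullet) \mapsto \op{Ann}(\F_\bullet)$; Theorem \ref{part1} supplies a bijection between power towers on $K$ and $K$-subalgebras $K \subset \mathcal{D} \subset \op{Diff}_k(K)$, realised by $W_\bullet \mapsto \op{Diff}_{W_\bullet}(K)$ and $\mathcal{D} \mapsto \op{const}(\mathcal{D}_\bullet)$. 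Composing these two bijections already produces a bijection between Jacobson sequences and subalgebras, so all three data types are in natural bijection; what is left is to verify that the four operations written in the statement are exactly the components of this single triple of mutually inverse bijections — in particular that the ``unpacking'' formula computes the composite $\mathcal{D} \mapsto \op{const}(\mathcal{D}_\bullet) = W_\bullet \mapsto T_{K/W_\bullet}$.

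This last point is precisely the content of Proposition \ref{part3}. Given $K \subset \mathcal{D} \subset \op{Diff}_k(K)$, set $W_\bullet \coloneqq \op{const}(\mathcal{D}_\bullet)$; by \ref{part1} one has $\mathcal{D} = \op{Diff}_{W_\bullet}(K)$, and then \ref{part3} yields $d(K/W_{n-1})(\mathcal{D}) \cap TW_{n-1} = T_{W_{n-1}/W_n}$ for every $n \ge 1$ (the case $n=1$ reading $\mathcal{D} \cap TK = T_{K/W_1}$). The right-hand side is exactly the $n$-th term of the iterated relative tangent bundle $T_{K/W_\bullet}$, so the unpacking of $\mathcal{D}$ is the Jacobson sequence attached to $W_\bullet$ by \ref{part2}; in particular it genuinely is a Jacobson sequence, by the well-definedness half of \ref{part2}. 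Combined with the fact — guaranteed by \ref{part1} — that every $K$-subalgebra has the form $\op{Diff}_{W_\bullet}(K)$, this shows that the triangle of maps commutes and the four displayed operations fit together into the asserted bijections. The only thing demanding care here is the indexing bookkeeping ($W_0 = K$; each $\F_i$ is a $p$-Lie algebra on $W_{i-1}$ with $\op{Ann}(\F_i) = W_i$); I do not expect any genuinely new obstacle beyond what \ref{part2}, \ref{part1} and \ref{part3} already overcome, so this synthesis should be short.

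It remains to establish the ``moreover'' clause. Finiteness of $\op{rk}(\F_i)$: by \ref{part1} we have $\op{dim}_{W_i}(K) = \op{dim}_K(\mathcal{D}_i)$, and $\mathcal{D}_i \subset \op{Diff}_{p^i}(K)$ is finite over $K$ because $K/K^{p^i}$ is finitely generated of bounded exponent, hence finite; therefore $K/W_i$, and a fortiori $W_{i-1}/W_i$, is finite, so $\F_i = T_{W_{i-1}/W_i}$ is a finite-dimensional $W_{i-1}$-vector space. The numerical identity expressing $p^{\op{rk}(\F_i)}$ through the corresponding step degree is the numerical part of the Jacobson Correspondence \ref{Jacobson} applied to $W_{i-1} \supset W_i \supset W_{i-1}^p$ — equivalently, it is the ``moreover'' part of Theorem \ref{part2}. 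Finally, $\op{rk}(\F_i) \ge \op{rk}(\F_{i+1})$ is obtained by exponentiating the chain of inequalities $\op{dim}_{W_1}(K) \ge \op{dim}_{W_2}(W_1) \ge \cdots$ furnished by Proposition \ref{NonincreasingOrder}. Assembling the three-way bijection above with these numerical facts completes the proof.
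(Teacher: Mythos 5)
Your proposal is correct and follows exactly the same route as the paper, whose entire proof is the single sentence that the theorem is an amalgamation of \ref{part1}, \ref{part2}, \ref{part3}, and \ref{NonincreasingOrder}; you simply spell out the gluing and the numerical bookkeeping in more detail. No gaps.
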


\begin{proof}
    The theorem is an amalgamation of the following results \ref{part1}, \ref{part2}, \ref{part3}, \ref{NonincreasingOrder}.
\end{proof}

Here are some examples.

\begin{Example}
A subalgebra $K$ corresponds to $K$.
\end{Example}

\begin{Example}
A subalgebra $\op{Diff}(K)$ corresponds to $k$.
\end{Example}

\begin{Example}
Let $k$ be a perfect field. Let $K=k(x,y)$. Then a power tower of $k(x)\subset k(x,y)$ corresponds to a subalgebra $K$-spanned by $\frac{1}{a!}\frac{\partial^{a}}{\partial y^{a}}$ for $a\ge 0$.
\end{Example}

More examples can be found in the Example \ref{examples}.

We finish with a remark comparing the above theorem with the Theorem \ref{JB}.

\begin{Rem}\label{closures}
Let $K$ be like in the Theorem \ref{MainTheorem}. Let $\mathcal{D}\subset \op{Diff}(K)$ be a $K$-subalgebra. Let $W_\infty$ be first integrals of this algebra.

We can consider $\mathcal{D}$ as a subalgebra of $\op{End}_\Z(K)$. Let $\overline{\mathcal{D}}$ be a closure of $\mathcal{D}$ in $\op{End}_\Z(K)$ in a finite topology. This closure is of the form $\op{End}_W(K)$, where $W$ is a subfield of $K$ by the Theorem \ref{JB}. Actually, $W=W_\infty$. Indeed, $\op{const}(\mathcal{D})=W_\infty$, so $\op{End}_{W_\infty}(K)\supset\mathcal{D}$. Hence, $\op{End}_W(K)\subset \op{End}_{W_\infty}(K)$, so $W_\infty\subset W$. But, $\op{End}_{W}(K)\supset\mathcal{D}$, so $\op{const}(\op{End}_{W}(K))=W\subset \op{const}(\mathcal{D})=W_\infty$. 
However, $\mathcal{D}$ is determined by $W_\infty$ if and only if a corresponding to it power tower is algebraically integrable, and this may not be the case, e.g. \ref{NonAlgInt} and \ref{examples}.

Furthermore, one could say that there are more not algebraically integrable power towers than algebraically integrable ones. Indeed, let $k$ be a finite field, then if $K/k$ has a positive transcendental degree, then there are only countable many subfields between $k$ and $K$, but there are uncountable many power towers on $K$. Therefore, in almost all cases, a closure of a subalgebra of differential operators says little about this subalgebra.
\end{Rem}

\newpage
\section{Applications}\label{section5}
In this section, we demonstrate some results by applying knowledge from the previous sections. There are three paragraphs here. 
In the first one \ref{injection}, we prove that separably closed subfields inject into power towers \ref{infty=s}. 
In the second one \ref{extension}, we discuss a problem of extending $1$-foliations to $2$-foliations. We describe what is an obstruction for this \ref{Splittings}, and that the obstruction is always trivial \ref{always}. In the last one \ref{examples}, we compute more examples of power towers on $k(x,y)$ \ref{examples}.

\paragraph{Injection of Subfields into Power Towers}\label{injection}

\begin{Prop}\label{infty=s}
Let $K$ be a field of characteristic $p>0$ such that $K/K^{p^\infty}$ is a finitely generated field extension. We put $k=K^{p^\infty}$.
    
Let $W$ be a subfield of $K$. Then a field of first integrals of a power tower of $W$ on $K$ is a separable closure of $k\cdot W$ in $K$, i.e. $W_\infty=\left(k\cdot W\right)^s$. Moreover, the power tower of $W$ on $K$ and a power tower of $W_\infty$ on $K$ are equal.
\end{Prop}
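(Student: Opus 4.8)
The plan is to treat the two assertions separately and to reduce the hard one to the differential-operator machinery of Sections~\ref{section3}--\ref{section4}. A first harmless reduction: since $k\subseteq K^{p^n}$ for every $n$, we have $W\cdot K^{p^n}=(k\cdot W)\cdot K^{p^n}$, so the power tower of $W$ on $K$ coincides with that of $kW$; thus I may assume $k\subseteq W$, and must show $W_\infty=W^s$, where $W^s$ denotes the (relative) separable algebraic closure of $W$ in $K$, together with $W_\infty\cdot K^{p^n}=W\cdot K^{p^n}$ for all $n$. The ``power towers coincide'' statement is in fact free: from $W\subseteq W_\infty\subseteq W_n=WK^{p^n}$ (both inclusions being obvious) one gets $WK^{p^n}\subseteq W_\infty K^{p^n}\subseteq WK^{p^n}K^{p^n}=WK^{p^n}$.

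For the inclusion $W^s\subseteq W_\infty$: if $a\in W^s$ then $a$ is separable algebraic over $W$, hence over each $W_n=WK^{p^n}$, while $a\in K$ is purely inseparable over $W_n$ (because $W_n\supseteq K^{p^n}$ makes $K/W_n$ purely inseparable); being simultaneously separable and purely inseparable over $W_n$, it lies in $W_n$, so $a\in\bigcap_n W_n=W_\infty$. Applying the same argument with $W$ replaced by $W^s$ gives $W^s\subseteq WK^{p^n}$ for all $n$, so the power tower of $W$ equals that of $W^s$; hence for the remaining inclusion I may and do assume $W=W^s$, i.e.\ $W$ is separably algebraically closed in $K$, and the goal becomes $W_\infty=W$.

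For this, translate to differential operators: by Lemma~\ref{L2} and Lemma~\ref{end=diff}, $\op{Diff}_W(K)=\bigcup_n\op{Diff}_{W_n}(K)=\bigcup_n\op{End}_{W_n}(K)$, and since $\op{const}(\op{End}_{W_n}(K))=W_n$ by the discrete Jacobson--Bourbaki correspondence~\ref{discrete} and $\op{const}$ of a union is the intersection of the $\op{const}$'s, we get $W_\infty=\bigcap_n W_n=\op{const}\big(\op{Diff}_W(K)\big)$. Now introduce the perfect saturation $W':=W^{1/p^\infty}\cap K$. As in the proof of Proposition~\ref{differential}, $K/W'$ is separable and $W'/W$ is purely inseparable; moreover $W'/W$ is finitely generated, hence of finite exponent by Lemma~\ref{finite exponents}, so $[W':W]<\infty$, and a Frobenius twist of minimal polynomials shows $W'$ is again separably algebraically closed in $K$. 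Using that $K/W'$ is separable, I would first show $W_\infty\subseteq W'$: it suffices to see $\op{const}(\op{Diff}_{W'}(K))=W'$, which I would compute from a $p$-basis of $K/k$ subordinate to $W'$ (a $p$-basis of $W'/k$ completed by a separating transcendence basis of $K/W'$, via Definition~\ref{Coordinates}) together with the elementary fact that $\bigcap_n F(y_1^{p^n},\dots,y_m^{p^n})=F$ in a rational function field; since $\op{Diff}_{W'}(K)\subseteq\op{Diff}_W(K)$, this gives $W_\infty\subseteq\op{const}(\op{Diff}_{W'}(K))=W'$. Hence every $a\in W_\infty$ lies in $W'$ and satisfies $a^{p^e}\in W$ for some $e\ge 0$. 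Finally, restriction of differential operators along $W'\hookrightarrow K$ (Proposition~\ref{differential}) carries $\op{Diff}_W(K)$ onto $\op{Diff}_W(W')\otimes_{W'}K=\op{End}_W(W')\otimes_{W'}K$ --- the point being that a $W$-linear differential operator $W'\to W'$ extends to one on $K$ because $K/W'$ is separable --- so an $a\in W_\infty\subseteq W'$, commuting with all of $\op{Diff}_W(K)$, commutes by restriction with all of $\op{End}_W(W')$; therefore $a\in\op{const}(\op{End}_W(W'))=W$ by the discrete Jacobson--Bourbaki correspondence~\ref{discrete} applied to the finite extension $W'/W$. This yields $W_\infty\subseteq W$ and finishes the proof.

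The main obstacle is this last stretch --- promoting the easy squeeze $W\subseteq W_\infty\subseteq W'$ to the exact equality $W_\infty=W$. Its two technical inputs, namely $\op{const}(\op{Diff}_{W'}(K))=W'$ when $K/W'$ is separable and the surjectivity of $d(K/W')$ on the subalgebra $\op{Diff}_W(K)$, are both natural variants of the coordinate computations already carried out for Proposition~\ref{differential}, but making them precise --- in particular keeping track of the separable-algebraic part of $K$ over the subfield generated by a $p$-basis, which is exactly what forces one to pass through the perfect saturation rather than work with a bare $p$-basis --- is where the genuine work lies.
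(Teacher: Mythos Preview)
Your argument is correct and the reductions in the first two paragraphs match the paper's exactly. The divergence is in how you finish: after reducing to $W=W^s$ separably closed in $K$ with $k\subset W$, you pass through the perfect saturation $W'=W^{1/p^\infty}\cap K$ and run a two-stage squeeze $W\subseteq W_\infty\subseteq W'$ followed by discrete Jacobson--Bourbaki on the finite extension $W'/W$. The paper instead stays with $W_\infty$ itself: having shown that the power towers of $W$ and of $W_\infty$ coincide, it invokes Lemma~\ref{L2} and Theorem~\ref{MainTheorem} to get $\op{Diff}_W(K)=\op{Diff}_{W_\infty}(K)$, then feeds this into the short exact sequence of Corollary~\ref{short exact sequence for diff} (restricted to $W$-linear operators) for the subfield $W_\infty\subset K$. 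This forces $\E(W_\infty/W)=0$, contradicting $\Omega_{W_\infty/W}\neq 0$ unless $W_\infty=W$.

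What each approach buys: the paper's route is shorter and exploits more directly the equality of power towers you already established --- that equality \emph{is} the punchline, via $\op{Diff}_W(K)=\op{Diff}_{W_\infty}(K)$. Your route is more constructive and makes the role of the separable/purely-inseparable decomposition explicit, but the two technical inputs you flag (the equality $\op{const}(\op{Diff}_{W'}(K))=W'$ in the separable case, and surjectivity of $d(K/W')$ restricted to $\op{Diff}_W(K)$) are real work: the first is essentially the special case of the proposition where $K/W'$ is separable, and needs the ``$\bigcap_n L_n^s = (\bigcap_n L_n)^s$'' step you allude to, which is not quite the rational-function-field identity alone. Both proofs ultimately lean on relative versions of Proposition~\ref{differential}, but the paper avoids having to re-prove the separable special case by hand.
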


\begin{proof}
We can assume that $k\subset W$.

First, observe that $(W^s)_\bullet=W_\bullet$. Indeed, for every $n\ge 0$, we have $K \supset W^s_n\supset W_n \supset K^{p^n}$, so $W^s_n\supset W_n$ is purely inseparable. But, on the other hand, $W^s_n\supset W_n$ is separable. Therefore, we have $W_n^s=W_n$ .

We conclude that $W^s\subset W_n$ for every $n$, hence $W^s\subset W_\infty$.

We also have $(W_\infty)^s = W_\infty$. Indeed, we have $(W_\infty)_n\subset W_n$, so $(W_\infty)^s\subset W_\infty$. Therefore, $(W_\infty)^s=W_\infty$.

Moreover, power towers of $W^s$ and $W_\infty$ on $K$ are equal since $W^s \subset W_\infty \subset W_n$ gives $W_n=W^s\cdot K^{p^n} \subset W_\infty\cdot K^{p^n} \subset W_n$.

Now, we know that we have an inclusion $W^s\subset W_\infty$, and that both of these fields are separably closed in $K$. Moreover,  it is a finitely generated extension of fields. If the extension was nontrivial, then we would have $\Omega_{W_\infty/W}\ne  0$. Indeed, if it was nontrivial, then we could find a finite sequence of elements $x_i\in W_\infty$, where $i=1,\ldots, r$, such that
\[
L_0\coloneqq W\subset L_1\coloneqq W(x_1)^s \subset L_2\coloneqq L_1(x_2)^s \subset ... \subset L_r\coloneqq L_{r-1}(x_r)^s=W_\infty
\]
is a sequence of proper extensions, and $x_i$ is purely inseparable, or purely transcendental over $L_{i-1}$. A simple calculation shows that $\Omega_{L_r/L_{r-1}}$ has dimension one, so it is non-zero. Furthermore, $\Omega_{L_r/L_0}$ surjects onto it, so it is non-zero either.

Hence, still assuming that the extension $W_\infty/W^s$ is nontrivial, we can conclude that $\op{Diff}_W(W_\infty)\ne W_\infty$, because the algebra contains the dual of $\Omega_{W_\infty/W}$. However, by the Corollary \ref{short exact sequence for diff} restricted to $W$-linear operators, we have
\[
0\to \op{Diff}_W(K) \E_W(K/W_\infty)\to \op{Diff}_W (K) \to \op{Diff}_W (W_\infty) \otimes K\to 0.
\]
And, by the Theorem \ref{MainTheorem}, the Lemma \ref{L2}, and the equality of power towers of $W$ and $W_\infty$, we have that $\op{Diff}_W(K)=\op{Diff}_{W_\infty}(K)$. In particular, we have $\E_W(K/W_\infty)=\E_W(K/W)$. However, this means that $\E_W(W_\infty/W)=0$, what is the contradiction with the fact that $\op{Diff}_W(W_\infty)\ne W_\infty$.
\end{proof}

The following extends the Corollary \ref{FiniteLength}.

\begin{Cor}\label{Recovery}
Let $K$ be a field of characteristic $p>0$ such that $K/K^{p^\infty}$ is a finitely generated field extension. We put $k=K^{p^\infty}$. We have the following diagram
\begin{center}
    \begin{tikzcd}
        \{K\supset W\supset K^{p^n}\}\ar[r, hook]&\{K\supset W=W^s\supset k\} \ar[r, hook, "W\mapsto W_\bullet"]& \text{Power Towers on $K$}\ar[l, bend right = 35, "W_\bullet \mapsto W_\infty"'],
    \end{tikzcd}
\end{center}
where $n$ is a positive integer, $W^s$ is a separable closure of $W$ in $K$, the Lemma \ref{separableclosure}, the first arrow is a map from subfields of finite exponent $\le n$ to subfields containing $k$ that are separably closed in $K$. Moreover, the operation of taking first integrals, the Definition \ref{W.infty}, splits the arrow of taking power towers here, the Definition \ref{PTsubfield}, hence the operation  $W\mapsto W_\infty$ is injective on these subfields.
\end{Cor}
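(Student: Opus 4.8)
The plan is to deduce the whole statement from Proposition \ref{infty=s}, after a few elementary checks about purely inseparable extensions; essentially all of the real content is already contained in that proposition.

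First I would verify that the first arrow is well defined, i.e. that a subfield $W$ with $K^{p^n}\subset W\subset K$ satisfies $k\subset W$ and $W=W^s$. The inclusion $k=K^{p^\infty}\subset K^{p^n}\subset W$ is immediate. For the second point, if $K^{p^n}\subset W$ then every $a\in K$ has $a^{p^n}\in K^{p^n}\subset W$, so $K/W$ is purely inseparable; by Lemma \ref{separableclosure} a purely inseparable extension has no nontrivial separable subextension, so the separable closure $W^s$ of $W$ in $K$ equals $W$. The arrow is then injective for the trivial reason that it is an inclusion of sets.

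Next I would check that the retraction $W_\bullet\mapsto W_\infty$ actually lands in $\{K\supset W=W^s\supset k\}$, and on all power towers, not merely the algebraically integrable ones. From Definition \ref{powertower} taken with $i=j$ one gets $W_j=W_j\cdot K^{p^j}$, hence $K^{p^j}\subset W_j$; in particular $k\subset W_j$ for every $j$, so $k\subset W_\infty$, and each extension $K/W_j$ is purely inseparable, so each $W_j$ is separably closed in $K$. Consequently, if $a\in K$ is separable over $W_\infty$ it is separable over every $W_j\supset W_\infty$, hence $a\in W_j$ for all $j$, hence $a\in W_\infty$; this gives $(W_\infty)^s=W_\infty$.

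Finally, for the splitting: given $W$ with $k\subset W=W^s\subset K$, Proposition \ref{infty=s} identifies the field of first integrals of $W_\bullet=W\cdot K^{p^\bullet}$ with $(k\cdot W)^s$, which equals $W$ since $k\subset W$ and $W=W^s$. Therefore the composite $W\mapsto W_\bullet\mapsto W_\infty$ is the identity on $\{K\supset W=W^s\supset k\}$; in particular $W\mapsto W_\bullet$ is injective there, and precomposing with the first arrow this recovers, and extends, Corollary \ref{FiniteLength}. I do not anticipate a genuine obstacle: Proposition \ref{infty=s} carries the weight, and the only points needing care are the bookkeeping ones above — above all checking that the retraction is defined on every power tower — each of which reduces to the purely inseparable nature of the individual stages $K/W_j$.
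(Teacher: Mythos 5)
Your proof is correct and follows essentially the same route as the paper: the first arrow is handled by noting that finite-exponent subfields are separably closed in $K$, and the splitting is read off directly from Proposition \ref{infty=s} via $W_\infty=(k\cdot W)^s=W$. Your extra check that $W_\bullet\mapsto W_\infty$ lands in $\{K\supset W=W^s\supset k\}$ for \emph{every} power tower is a welcome piece of bookkeeping that the paper leaves implicit, but it does not change the substance of the argument.
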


\begin{proof}
    The upmost left map is an inclusion, because subfields of finite exponent are separably closed in $K$. Indeed, let $W^s$ be a separable closure of a subfield $W$ of exponent $\le n$, see the Lemma \ref{separableclosure}, then we have that $K^{p^n}\subset W\subset W^s \subset K$. However, this says that the extension $W\subset W^s$ is a separable subextension of a purely inseparable extension $W\subset K$, so it must be purely inseparable as well. This means that $W=W^s$.

    The splitting is precisely the Proposition \ref{infty=s}. It says that $W_\infty=(W\cdot k)^s$. However, here, we have that $W\cdot k=W$, and $W=W^s$. Therefore, we get $W_\infty=W$.
\end{proof}

\paragraph{Extension Problem}\label{extension}
Our $n$-foliations, the Definition \ref{n-foliation}, are analogs of Ekedahl's $n$-foliations from his paper \cite{Ekedahl87}. In this paper, Ekedahl considers a problem of extending a $1$-foliation to a $2$-foliation. His main result regarding this is an example of a $1$-foliation on a variety that does not admit any extension to a $2$-foliation on the variety, it is on the page 145 there. This example is important, because it is also a positive characteristic counterexample to a lemma used in a proof of a Miyaoka's semipositivity theorem, the Example 8.8 in \cite{Mi87}. However, it is not yet resolved in the literature if being extendable to a $\infty$-foliation on a variety is the only obstruction for $1$-foliations on varieties to satisfy that lemma. Therefore, here, we approach this extension problem for fields as a small step towards a resolution.

First, we define the problem.

\begin{Problem}
    Let $K$ be a field of characteristic $p>0$. Let $W_1$ be a subfield of $K$ of exponent $1$, i.e. $K\supset W_1\supset K^p$ and $W_1\ne K$. 
    Is there a subfield $W_2$ such that $W_1,W_2$ is a $2$-foliation? Furthermore, are there subfields $W_n$ for $n\ge 2$ such that $W_\bullet$ is a $\infty$-foliation?
\end{Problem}

By the following lemma, to answer the second question from the above problem it is enough to answer the first one.

\begin{Lem}
Let $K$ be a field of characteristic $p>0$. 
Let $W_1,W_2$ be a $2$-foliation on $K$, and let $W_2,W_3$ be a $2$-foliation on $W_1$. Then $W_1,W_2,W_3$ is a $3$-foliation on $K$.
\end{Lem}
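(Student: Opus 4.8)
The plan is to chase degrees through the tower. By definition of a $2$-foliation on $K$, the extension $K \supset W_1 \supset K^p$ has $\op{dim}_{W_1}(K) = \op{dim}_{W_2}(W_1) =: p^r$, and by definition of a $2$-foliation on $W_1$, the extension $W_1 \supset W_2 \supset W_1^p$ has $\op{dim}_{W_2}(W_1) = \op{dim}_{W_3}(W_2) = p^r$, the same $r$ since the middle term $\op{dim}_{W_2}(W_1)$ is shared. So all three consecutive degrees already agree; what remains is to check that $W_1, W_2, W_3$ is genuinely a power tower of length (at most) $3$ on $K$ in the sense of Definition \ref{powertower}, i.e. that $W_2 \cdot K^{p} = W_1$, $W_3 \cdot K^{p^2} = W_2$, $W_3 \cdot K^{p} = W_1$, and that $W_3$ is of exponent $\le 1$ over $W_2$ while $W_2$ has exponent $\le 2$ over $K$, $W_3$ exponent $\le 3$ over $K$.

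First I would unwind what the two hypotheses literally give. "$W_1,W_2$ is a $2$-foliation on $K$" means $W_1 \cdot K^p = W_1$ (automatic, as $K^p \subset W_1$), $W_2 \cdot K^{p^2} = W_2$, and $W_2 \cdot K^p = W_1$ — this last is the substantive content, together with $W_1 \ne K$, $W_2 \ne W_1$, and the equal-degree condition. "$W_2, W_3$ is a $2$-foliation on $W_1$" means $W_3 \cdot W_1^{p^2} = W_3$, $W_3 \cdot W_1^{p} = W_2$, plus the degree condition. The key identity to establish for the claimed $3$-foliation is $W_3 \cdot K^{p^2} = W_2$: starting from $W_3 \cdot W_1^p = W_2$ and substituting $W_1 = W_2 \cdot K^p$, so $W_1^p = W_2^p \cdot K^{p^2} \subset W_2 \cdot K^{p^2}$, I get $W_2 = W_3 \cdot W_1^p \subset W_3 \cdot K^{p^2} \subset W_3 \cdot W_1^p = W_2$ — wait, I need $W_3 \cdot K^{p^2} \subset W_2$, which holds because $W_3 \subset W_2$ and $K^{p^2} \subset W_2^{p} \cdot \ldots$; more carefully $K^{p^2} = (K^p)^p \subset W_1^p \subset W_2$ since $W_1^p \subset W_2$ (exponent $\le 1$ of $W_1/W_2$, from Lemma \ref{Exponents1} applied to the $2$-foliation on $W_1$). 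So $W_3 \cdot K^{p^2} \subset W_2$, and combined with $W_2 = W_3 \cdot W_1^p \subset W_3 \cdot K^{p^2}$ we get equality. Then $W_3 \cdot K^p = (W_3 \cdot K^{p^2}) \cdot K^p = W_2 \cdot K^p = W_1$, so the power tower condition $i \le j \Rightarrow W_j \cdot K^{p^i} = W_i$ holds for all relevant pairs among $0,1,2,3$ (the pair $(0, \bullet)$ being trivial since $W_0 = K$).

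Once $W_1, W_2, W_3$ is confirmed to be a power tower of length $\le 3$ on $K$, the definition of $3$-foliation (Definition \ref{n-foliation}) requires length $n$ and constant degree $\op{dim}_{W_{i+1}}(W_i)$ for $0 \le i < n$; we computed all three degrees equal $p^r$, and $W_1 \ne K$ forces $r \ge 1$ so the length is exactly $3$ provided $W_3 \ne W_2$, which is part of the hypothesis that $W_2, W_3$ is a $2$-foliation (hence $W_3 \ne W_2$). The main obstacle — really the only nonroutine point — is the bookkeeping in the substitution $W_1 = W_2 \cdot K^p$ into $W_1^p$ and verifying the inclusions go both ways; everything else is either definitional or an immediate consequence of Lemma \ref{Exponents1} and the multiplicativity of degrees in a tower. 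I would present it as: (1) record the degree equalities and note they chain to a single common value $p^r$; (2) prove the power-tower identities $W_3 \cdot K^{p^2} = W_2$ and $W_3 \cdot K^p = W_1$ by the substitution above; (3) conclude via Definition \ref{n-foliation}, checking non-degeneracy from $W_1 \ne K$ and $W_2 \ne W_3$.
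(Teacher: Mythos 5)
Your proposal is correct and follows essentially the same route as the paper, whose entire proof is the single chain $W_2 = W_3\cdot W_1^p = W_3\cdot\left(W_2^p\cdot K^{p^2}\right) = W_3\cdot K^{p^2}$, using $W_2^p\subset W_3$; the extra verifications you carry out ($W_3\cdot K^p=W_1$, the degree bookkeeping) are left implicit there. The one small repair needed in your write-up is that the forward inclusion $W_3\cdot W_1^p\subset W_3\cdot K^{p^2}$ should be justified by $W_2^p\subset W_3$ (Lemma \ref{Exponents1} applied to the tower on $W_1$), not by $W_1^p\subset W_2\cdot K^{p^2}$, which only gives $W_3\cdot W_1^p\subset W_2\cdot K^{p^2}$.
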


\begin{proof}
    It is enough to prove that $W_3\cdot K^{p^2}=W_2$. We have that $W_1=W_2\cdot K^p$, so $W_1^p=W_2^p\cdot K^{p^2}$. Consequently, we have $W_2=W_3\cdot W_1^p=W_3\cdot (W_2^p\cdot K^{p^2})=(W_3\cdot W_2^p)\cdot K^{p^2}=W_3\cdot K^{p^2}$, because $W_3\supset W_2^p$.
\end{proof}

Second, we can write down an obstruction in terms of Jacobson sequences \ref{JacSeq}. In the below, an abstract $p$-Lie algebra on a field $K$ is a $K$-vector space enriched with a Lie bracket, and a $p$-power operation satisfying some axioms to make them behave like $p$-Lie subalgebras of $TK$. It is made precise in a paper \cite{p-lie-coho}.

\begin{Prop}\label{Splittings}
Let $K$ be a field of characteristic $p>0$ such that $K/K^{p^\infty}$ is a finitely generated field extension.

Let $W_1$ be a subfield of $K$ of exponent one. Then we have a short exact sequence of abstract $p$-Lie algebras on $W_1$:
\[
0 \to T_{W_1/K^p} \to TW_1 \xrightarrow{d(W_1/K^p)} T_{K^P/W_1^p}\otimes W_1 \to 0.
\]
This sequence admits a right split of abstract $p$-Lie algebras if and only if $W_1$ can be extended to a $2$-foliation on $K$. Moreover, all such splittings are in a bijection with all such extensions. Explicitly, if a splitting $\sigma$ corresponds to an extension $W_2$, then the image of $\sigma$ is $T_{W_1/W_2}$.
\end{Prop}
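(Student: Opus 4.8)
The plan is to untangle the statement into three assertions and handle them in order: (i) that the displayed sequence of $W_1$-vector spaces is exact and consists of $p$-Lie algebra maps; (ii) that right splittings of abstract $p$-Lie algebras correspond to subfields $W_2$ with $W_1,W_2$ a $2$-foliation on $K$; and (iii) that the correspondence is a bijection with $\operatorname{im}(\sigma)=T_{W_1/W_2}$. For (i), the exactness of $0\to T_{W_1/K^p}\to TW_1\xrightarrow{d(W_1/K^p)} T_{K^p/W_1^p}\otimes W_1\to 0$ is an instance of the kernel description and the surjectivity from Proposition~\ref{differential} together with Corollary~\ref{short exact sequence for diff}, restricted to first-order operators, once we identify $TW_1=\operatorname{Diff}^{\le 1}_k(W_1)\cap\E(W_1/k)$ and observe that $d(W_1/K^p)$ restricted to derivations is the usual differential on tangent spaces; the target is $T_{K^p/W_1^p}\otimes W_1$ because $\operatorname{const}(TW_1)=K^p$ inside $W_1$ after base change along $K^p\subset W_1$, and $K^p/W_1^p$ has the matching exponent-one structure (here $\dim_{W_1^p}(K^p)=\dim_{K^p}(W_1)$). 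That all three maps are maps of abstract $p$-Lie algebras on $W_1$ is routine: derivations vanishing on a subfield form a $p$-Lie subalgebra (this is exactly the content of the Jacobson Correspondence~\ref{Jacobson}), and $d(W_1/K^p)$ commutes with brackets and $p$-th powers because it is a restriction map.

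For the core equivalence (ii)–(iii), I would argue via the iterated Jacobson Correspondence~\ref{part2}. A $2$-foliation extension $W_2$ of $W_1$ is, by definition, a power tower $K\supset W_1\supset W_2\supset W_3=W_2\supset\cdots$ with $W_2\cdot K^p=W_1$ and constant degree; forgetting the (determined) tail, this is the same as a subfield $W_2$ with $K^p\subset W_2^p\subset\dots$, i.e.\ $W_1\supset W_2\supset W_1^p$ satisfying $W_2\cdot K^p=W_1$ and $\dim_{W_2}(W_1)=\dim_{W_1}(K)$. Under the Jacobson Correspondence~\ref{Jacobson} applied to $W_1\supset W_1^p$, such $W_2$ correspond bijectively to $p$-Lie algebras $\F_2\subset TW_1$ with $\operatorname{Ann}(\F_2)=W_2$, and the power-tower condition $W_2\cdot K^p=W_1$ translates — exactly as in the proof of~\ref{part2} — to $\F_2\cap T_{W_1/K^p}=0$, i.e.\ $d(W_1/K^p)$ is injective on $\F_2$. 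The constant-degree condition $p^{\dim(\F_2)}=\dim_{W_2}(W_1)=\dim_{W_1}(K)=p^{\dim T_{W_1/K^p}}$ forces $\dim\F_2=\dim(TW_1)-\dim T_{W_1/K^p}=\dim\bigl(T_{K^p/W_1^p}\otimes W_1\bigr)$, so $d(W_1/K^p)|_{\F_2}$ is an isomorphism onto the target; dually, any $p$-Lie subalgebra $\F_2$ mapping isomorphically onto $T_{K^p/W_1^p}\otimes W_1$ is precisely the image of a $p$-Lie right splitting $\sigma$. Thus the assignments $\sigma\mapsto \operatorname{im}(\sigma)=:\F_2\mapsto \operatorname{Ann}(\F_2)=:W_2$ and back are mutually inverse bijections, and by construction $\operatorname{im}(\sigma)=\F_2=T_{W_1/\operatorname{Ann}(\F_2)}=T_{W_1/W_2}$.

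The step I expect to be the main obstacle is pinning down that a \emph{right splitting in the category of abstract $p$-Lie algebras on $W_1$} is genuinely the same data as a $p$-Lie subalgebra of $TW_1$ complementary to $T_{W_1/K^p}$ and mapping isomorphically to the quotient — in particular that the $p$-power operation transported from the quotient along $\sigma$ agrees with the restricted $p$-power on $TW_1$, and that $\operatorname{im}(\sigma)$ is closed under the bracket of $TW_1$ (not merely under the abstract bracket of the quotient). This is where the precise axioms from \cite{p-lie-coho} for abstract $p$-Lie algebras enter: a right split $\sigma$ is a section of both the bracket and the $p$-operation, so $\operatorname{im}(\sigma)$ is automatically a sub-$p$-Lie-algebra of $TW_1$, and conversely a complementary sub-$p$-Lie algebra gives a section. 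Once this identification is made explicit, the rest is bookkeeping with the already-established correspondences. I would also remark that the exactness in (i) already shows $\dim_{W_1}TW_1=\dim_{W_1}T_{W_1/K^p}+\dim_{W_1}\bigl(T_{K^p/W_1^p}\otimes W_1\bigr)$, which is what makes "injective on $\F_2$" and "isomorphism onto the quotient" coincide, so the rank bookkeeping in~\ref{MainTheorem} is being used only through this dimension count.
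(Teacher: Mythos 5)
Your proof is correct and follows essentially the same route as the paper: exactness is obtained from the Jacobson correspondence/differential applied to $W_1\supset K^p\supset W_1^p$, and the splitting--extension bijection comes from translating a $2$-foliation extension into a Jacobson sequence whose second term satisfies $\F_2\cap T_{W_1/K^p}=0$. The one place you go beyond the paper's own (terser) argument is in making explicit the dimension count forced by the constant-degree condition, which is exactly what upgrades ``$\F_2$ meets the kernel trivially'' to ``$\F_2$ is the image of a section''; the paper leaves this step implicit.
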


\begin{proof}
    The exactness of the sequence follows from the Jacobson Correspondence \ref{Jacobson} applied to extensions $W_1 \supset K^p\supset W^p_1$.

    Any extension of $W_1$ to a $2$-foliation is a pair of fields $W_1,W_2$. By the Theorem \ref{MainTheorem}, it is equivalent to a pair of $p$-Lie algebras $T_{K/W_1},T_{W_1/W_2}$ that is a Jacobson sequence \ref{JacSeq}, so $T_{W_1/W_2}\subset TW_1$ and $T_{W_1/W_2}\cap T_{W_1/K^p}=0$. Therefore, any such $T_{W_1/W_2}$ defines a right splitting that preserves Lie brackets, and $p$-powers. And, any such splitting provides such subspace as its image.
\end{proof}

Finally, we show that the obstruction is trivial.

\begin{Prop}\label{always}
    Let $K$ be like in the Proposition \ref{Splittings}.
    
    Let $W_1$ be a $1$-foliation on $K$. Then $W_1$ can be extended to a $2$-foliation on $K$.
\end{Prop}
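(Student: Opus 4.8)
The plan is to reduce, via Proposition \ref{Splittings}, to producing a single explicit $p$-Lie subalgebra of $TW_1$, and then to write it down in coordinates adapted to the tower $W_1^p \subset K^p \subset W_1 \subset K$. By Proposition \ref{Splittings} it suffices to split the short exact sequence
\[
0 \to T_{W_1/K^p} \to TW_1 \xrightarrow{\ d(W_1/K^p)\ } T_{K^p/W_1^p}\otimes W_1 \to 0 ;
\]
unwinding its proof, this amounts to exhibiting a $p$-Lie subalgebra $\mathfrak h \subset TW_1$ which is a $W_1$-linear complement, $\mathfrak h \oplus T_{W_1/K^p} = TW_1$. Such an $\mathfrak h$ is automatically $T_{W_1/W_2}$ for $W_2 := \op{Ann}(\mathfrak h)$, and $(W_1, W_2, W_2, \dots)$ is then the desired extension.

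To set up coordinates, put $r := \log_p[K:W_1]$ and $N := \log_p[K:K^p]$, so $1 \le r \le N$ as $W_1 \ne K$. The extensions $K/W_1$ and $W_1/K^p$ are finite of exponent $\le 1$ (since $K^p \subset W_1$), and hence so is $K^p/W_1^p$ (since $K^{p^2} \subset W_1^p$). Note also that $W_1/k$ is finitely generated, being a subextension of $K/k$, and that $k = W_1^{p^\infty}$ (as $k$ is perfect and $k \subset W_1 \subset K$). I would first choose a $p$-basis $z_1,\dots,z_r$ of $K$ over $W_1$ and set $u_i := z_i^p$, which gives a $p$-basis of $K^p$ over $W_1^p$; these $u_i$ lie in $K^p \subset W_1$, and the monomials $\prod u_i^{e_i}$ with $0 \le e_i < p$ are $W_1^p$-linearly independent in $K^p$, hence in $W_1$. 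So $u_1,\dots,u_r$ is a $p$-independent subset of $W_1$ for $W_1/k$, and by Definition \ref{Coordinates} it completes to a $p$-basis $u_1,\dots,u_N$ of $W_1$ over $k$.

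In the corresponding basis $\frac{\partial}{\partial u_1},\dots,\frac{\partial}{\partial u_N}$ of $TW_1$, a derivation $\sum a_i \frac{\partial}{\partial u_i}$ kills $K^p = W_1^p(u_1,\dots,u_r)$ if and only if $a_1 = \dots = a_r = 0$, so $T_{W_1/K^p} = \bigoplus_{j>r} W_1 \frac{\partial}{\partial u_j}$. I then propose
\[
\mathfrak h := \bigoplus_{i=1}^{r} W_1\,\frac{\partial}{\partial u_i},
\]
which visibly satisfies $\mathfrak h \oplus T_{W_1/K^p} = TW_1$. It remains to check $\mathfrak h$ is a $p$-Lie subalgebra. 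For the bracket, $\bigl[a\tfrac{\partial}{\partial u_i},\, b\tfrac{\partial}{\partial u_{i'}}\bigr] = a\,\tfrac{\partial b}{\partial u_i}\,\tfrac{\partial}{\partial u_{i'}} - b\,\tfrac{\partial a}{\partial u_{i'}}\,\tfrac{\partial}{\partial u_i}$, whose two surviving indices are $\le r$, so it lies in $\mathfrak h$. For the $p$-operation, $\bigl(\tfrac{\partial}{\partial u_i}\bigr)^{[p]} = \bigl(\tfrac{\partial}{\partial u_i}\bigr)^{p} = 0$, because $\tfrac{\partial}{\partial u_i}$ annihilates $W_1^p$ and sends each basis monomial $\prod u_j^{e_j}$ ($0 \le e_j < p$) to $e_i(e_i-1)\cdots(e_i-p+1)\,u_i^{e_i-p}\prod_{j\ne i}u_j^{e_j} = 0$; then Jacobson's identity $(aD)^{[p]} = a^p D^{[p]} + (aD)^{p-1}(a)\,D$ gives $\bigl(a\tfrac{\partial}{\partial u_i}\bigr)^{[p]} \in W_1\tfrac{\partial}{\partial u_i} \subset \mathfrak h$. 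Hence $\mathfrak h$ is a $p$-Lie subalgebra complementary to $T_{W_1/K^p}$, and Proposition \ref{Splittings} yields the $2$-foliation $(W_1,\op{Ann}(\mathfrak h),\op{Ann}(\mathfrak h),\dots)$ on $K$.

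I expect the only points needing care to be the two standard inputs on exponent-$\le 1$ extensions — existence of a $p$-basis of $K/W_1$ and the completion step for $W_1/k$ — together with the bookkeeping that identifies $T_{W_1/K^p}$ inside $TW_1$ in these coordinates; once that is in place, the verification that $\mathfrak h$ is closed under bracket and restriction is immediate, since coordinate vector fields commute and are $p$-nilpotent. Conceptually this is precisely where the argument departs from the variety case of \cite{Ekedahl87}: Ekedahl's obstruction is a genuinely global phenomenon, and over a field one can always manufacture a splitting from a single coordinate system. If one prefers to bypass Proposition \ref{Splittings}, the same $\mathfrak h$ works directly: set $W_2 := \op{Ann}(\mathfrak h)$ via the Jacobson Correspondence \ref{Jacobson} for $W_1 \supset K^p \supset W_1^p$; then $[W_1:W_2] = p^r = [K:W_1]$ and $W_2 \cdot K^p = \op{Ann}(\mathfrak h \cap T_{W_1/K^p}) = \op{Ann}(0) = W_1$, so $(W_1, W_2, W_2, \dots)$ is a $2$-foliation on $K$ by Theorem \ref{MainTheorem}.
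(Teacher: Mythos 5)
Your proof is correct, and it reaches the conclusion by a genuinely different route from the paper's, even though the two arguments share their key coordinate input. Both begin the same way: choose generators $z_1,\dots,z_r$ of $K$ over $W_1$ (the paper's $t_i$), observe that their $p$-th powers are $p$-independent in $W_1$ over $k$, and complete them to a $p$-basis of $W_1/k$ (your $u_1,\dots,u_N$; the paper's $t_1^p,\dots,t_r^p,a_1,\dots,a_{N-r}$). From there the paper does not use Proposition \ref{Splittings} at all: it upgrades the data to a $p$-basis $a_\bullet,t_\bullet$ of $K/k$, defines $W_2\coloneqq k(a_\bullet,t_\bullet^{p^2})^s$ explicitly, and checks $W_2\cdot K^p=W_1$ and the degree condition by direct computation with separable closures. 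You instead work on the dual side: you exhibit the coordinate subalgebra $\mathfrak{h}=\bigoplus_{i\le r}W_1\frac{\partial}{\partial u_i}$ as a $p$-Lie complement to $T_{W_1/K^p}=\bigoplus_{j>r}W_1\frac{\partial}{\partial u_j}$ and feed it into Proposition \ref{Splittings}, or, as in your last paragraph, directly into the Jacobson Correspondence via $W_2=\op{Ann}(\mathfrak{h})$ and $W_2\cdot K^p=\op{Ann}(\mathfrak{h}\cap T_{W_1/K^p})=\op{Ann}(0)=W_1$. Your route uses \ref{Splittings} for exactly the purpose it was stated and makes the triviality of the obstruction transparent: in adapted coordinates the sequence of $p$-Lie algebras visibly splits, with closure under bracket and $p$-th power reducing to commutativity and $p$-nilpotence of coordinate derivations. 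The paper's route buys an explicit field-theoretic description of $W_2$ and avoids leaning on the restricted-Lie-algebra formalism; in particular it sidesteps Jacobson's formula, which you do need in order to pass from $\left(\frac{\partial}{\partial u_i}\right)^{[p]}=0$ to closure of $\mathfrak{h}$ under the $p$-operation --- and that step should be recorded for general elements $\sum_i a_i\frac{\partial}{\partial u_i}$, not only for $a\frac{\partial}{\partial u_i}$, using that $(x+y)^{[p]}-x^{[p]}-y^{[p]}$ is a sum of iterated brackets; this is standard and harmless. The two constructions produce the same $W_2$.
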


\begin{proof}
    Let $r$ be a rank of a $1$-foliation $W_1$. Put $k=K^{p^{\infty}}$. Let $\op{tr.deg}(K/k)=N$. 

    In the proof, we use a lot of facts about $p$-bases, see the Definition \ref{Coordinates} and its references.

    There are $r$-elements $t_1,t_2,\ldots,t_r\in K$ such that $W_1(t_\bullet)=K$. These elements are $p$-independent in $K$, because monomials $\prod_i t_i^{a_i}$, where $0\le a_i<p$, are $W_1$-linearly independent. Therefore, they are $K^p$-linearly independent, because $W_1\supset K^p$. Moreover, their $p$-powers $t_i^p\in W_1$ are $p$-independent in $W_1$, because this means that monomials monomials $\prod_i t_i^{p \cdot a_i}$, where $0\le a_i<p$, are $W_1^p$-linearly independent, which, by taking $p$-roots, is the above information.

    Let $a_1,a_2,\ldots, a_N$ be a $p$-basis for $W_1/k$. This is equivalent to $da_\bullet$ being a $W_1$-linear basis of $\Omega_{W_1/k}$.
    Consequently, it admits an exchange lemma, i.e. any $p$-independent set can be completed to an $p$-independent set of maximal size by adding some elements from other maximal size independent set. Therefore, there are $N-r$ elements among $a_i$, say $a_1,a_2,\ldots,a_{N-r}$, such that $a_1,a_2,\ldots,a_{N-r}, t_1^p, t_2^p, \ldots, t_r^p$ is a $p$-basis for $W_1/k$.

    We claim that $a_1,a_2,\ldots,a_{N-r}, t_1, t_2, \ldots, t_r$ is a $p$-basis for $K/k$. Indeed, they are algebraically independent, because if not, then there would be a relation $F$ between them, but then $F^p$ would be such relation for $a_1,a_2,\ldots,a_{N-r}, t_1^p, t_2^p, \ldots, t_r^p$, a contradiction.
    Next, we have a diagram
    \begin{center}
        \begin{tikzcd}
        &K=W_1(t_\bullet)&\\
        k(a_\bullet,t_\bullet)^s\ar[ur]&&W_1\ar[ul]\\
        &k(a_\bullet,t^p_\bullet)^s\ar[ur, equal]\ar[ul]&
        \end{tikzcd}
    \end{center}
    wherein the separable closures are taken in $K$.
    The extensions $k(a_\bullet,t_\bullet)^s/k(a_\bullet,t^p_\bullet)^s$ and $K/W_1$ are of order $p^r$, so the extension $K/k(a_\bullet,t^p_\bullet)^s$ must be trivial, i.e. $k(a_\bullet,t^p_\bullet)^s=K$. Therefore, $a_1,a_2,\ldots,a_{N-r}, t_1, t_2, \ldots, t_r$ is a separating transcendental basis, so it is a $p$-basis for $K/k$.

    Finally, consider the following diagram of fields:

    \begin{center}
        \begin{tikzcd}
            K=k(a_\bullet,t_\bullet)^s & W_1=k(a_\bullet,t^p_\bullet)^s\ar[l] & W_2\coloneqq k(a_\bullet,t^{p^2}_\bullet)^s\ar[l]\\
            k(a_\bullet,t_\bullet)\ar[u]&k(a_\bullet,t^p_\bullet)\ar[u]\ar[l]&k(a_\bullet,t^{p^2}_\bullet)\ar[u]\ar[l]
        \end{tikzcd}
    \end{center}
    We claim that $K\supset W_1\supset W_2$ is a $2$-foliation.
    Indeed, we have that $W_1\supset K^{p}$, $W_2\supset K^{p^2}$, $[K:W_1]=[W_1:W_2]=p^r$, so the only remaining thing to check is $W_2\cdot K^p=W_1$.

    It is easy: 
    \[
    W_1\supset W_2\cdot K^p\supset k(a_1,t^{p^2})^s(a_1,t^p)=k(a_1,t^{p^2})^s(t^p)=k(a_1^p,t^p)^s=W_1
    \]
    where $k(a_1,t^{p^2})^s(t^p)=k(a_1^p,t^p)^s$, because
    $k(a_1,t^{p^2})^s(t^p)=(k(a_1,t^{p^2})^s(t^p))^s=k(a_1,t^p)^s$, where the second equality follows from the subfield $k(a_1,t^{p^2})^s(t^p)$ being of finite exponent since this implies being separably closed in $K$.
\end{proof}

\paragraph{Examples}\label{examples}
We present a simple class of power towers on $k(x,y)$. In the below example, we use symbols $\frac{1}{p^{m}!}\frac{\partial^{p^m}}{\partial y ^{p^m}}$ and $\frac{\partial}{\partial \left(y^{p^m}\right)}$, and $\frac{1}{p^{m}!}\frac{\partial^{p^m}}{\partial x ^{p^m}}$ and $\frac{\partial}{\partial \left(x^{p^m}\right)}$ interchangeably.

\begin{Example}\label{examples}
Let $K=k(x,y)$ be rational functions in two variables over a perfect field $k$ of characteristic $p>0$. 

We define a power tower of a ``subfield'' 
\[
W_\infty=k(x+A_1y^p+A_2y^{p^2}+A_3y^{p^3}+\ldots),
\]
on $K$, where $A_i\in k$, by the following formulas
\[
W_n \coloneqq k(x+A_1 y^p + A_2 y^{p^2} + \ldots, A_{n-1} y^{p^{n-1}},y^{p^n}).
\]
It is clear from the definition that $W_\bullet$ is a power tower on $K$. Moreover, it is a $\infty$-foliation of rank $1$. Indeed, we have that $W_n(y^{p^{n-1}})=W_{n-1}$, and $W_n\ne W_{n-1}$ for $n\ge 1$.

We can compute Jacobson sequences and subalgebras of differential operators corresponding to these examples, see the Theorem \ref{MainTheorem}. We start with the Jacobson sequences.

The first extension $K/W_1$ is $k(x,y)\supset k(x,y^p)$. This corresponds to a $p$-Lie algebra on $K$ generated by a vector field $\frac{\partial}{\partial y}$. 

The second extension $W_1/W_2$ is $k(x,y^p)\supset k(x+A_1y^p,y^{p^2})$.  The tangent space of the bigger field is still easy to compute in terms of $x,y$. Indeed, it is spanned by $\frac{\partial}{\partial y^p}$ and $\frac{\partial}{\partial x}$. Clearly, this extension corresponds to a $p$-Lie algebra on $W_1$ generated by a vector field $\frac{\partial}{\partial y^p}-A_1\frac{\partial}{\partial x}$. To check that it is a $p$-Lie algebra we use the Corollary \ref{obvious.values} and the assumption $A_1\in k$.

The third extension $W_2/W_3$ is $k(x+A_1y^p,y^{p^2})/k(x+A_1y^p+A_2y^{p^2}, y^{p^3})$. Starting from here, the computation of a tangent space in terms of $x,y$ gets harder. We use the fact that a differential $d(K/W_2)$ is surjective, the Proposition \ref{differential}, to find differential operators on $K$ that are mapped to a basis of $TW_2$. 
It is easy to check that operators $\frac{\partial}{\partial y^{p^2}}-A_1^p\frac{\partial}{\partial x^p}$ and $\frac{\partial}{\partial x}$ are mapped to $TW_2$ and span it. In this basis, the extension corresponds to a vector field $\frac{\partial}{\partial y^{p^2}}-A_1^p\frac{\partial}{\partial x^p} - A_2\frac{\partial}{\partial x}$.

Let $n\ge 3$. We have that $TW_{n-1}$ is spanned by operators
$\frac{\partial}{\partial y^{p^{n-1}}}-A_1^{p^{n-2}}\frac{\partial}{\partial x^{p^{n-2}}} - A_2^{p^{n-3}}\frac{\partial}{\partial x^{p^{n-3}}}-\ldots-A_{n-2}^p\frac{\partial}{\partial x^{p}}$ and $\frac{\partial}{\partial x}$. And an extension $W_{n-1}/W_{n}$ corresponds to a vector field $\frac{\partial}{\partial y^{p^{n-1}}}-A_1^{p^{n-2}}\frac{\partial}{\partial x^{p^{n-2}}} - A_2^{p^{n-3}}\frac{\partial}{\partial x^{p^{n-3}}}-\ldots-A_{n-1}\frac{\partial}{\partial x}$.

Finally, by an operation of unpacking, we can conclude that $\op{Diff}_{W_\bullet}(K)$ is generated by operators
$\frac{\partial}{\partial y}, \frac{\partial}{\partial y^p}-A_1\frac{\partial}{\partial x},\ldots, \frac{\partial}{\partial y^{p^{n-1}}}-A_1^{p^{n-2}}\frac{\partial}{\partial x^{p^{n-2}}} - A_2^{p^{n-3}}\frac{\partial}{\partial x^{p^{n-3}}}-\ldots-A_{n-1}\frac{\partial}{\partial x},\ldots$.

\end{Example}

\newpage

\end{document}